\documentclass[11pt]{amsart}
\usepackage{amsmath,amsxtra,amssymb,amsthm,amsfonts}
\usepackage{color}

\vfuzz2pt 
\hfuzz2pt 
\numberwithin{equation}{section}
\newtheorem{lemma}{Lemma}[section]
\newtheorem{prop}[lemma]{Proposition}
\newtheorem{theorem}[lemma]{Theorem}
\newtheorem{cor}[lemma]{Corollary}

\newtheorem{prob}[lemma]{Problem}
\newtheorem{rem}[lemma]{Remark}

\newcommand{\re}{\begin{rem}\rm}
  \newcommand{\mar}{\end{rem}}
\newtheorem{exam}[lemma]{Example}

\newtheorem{defi}[lemma]{Definition}

\newcommand{\p}{\hspace{.05cm}}
\newcommand{\pl}{\hspace{.1cm}}

\newcommand{\kl}{\pl \le \pl}
\newcommand{\gl}{\pl \ge \pl}

\newcommand{\lel}{\pl = \pl}

\newcommand{\kla}{\left ( }
\newcommand{\mer}{\right ) }
\def\1{\mathbf{1}}

\newcommand{\C}{\mathbb{C}}
\renewcommand{\for}{\begin{eqnarray*}}

\newcommand{\mel}{\end{eqnarray*}}

\newcommand{\ez}{{\mathbb E}}

\newcommand{\nz}{{\mathbb N}}

\newcommand{\rz}{{\mathbb R}}

\newcommand{\ten}{\otimes}

\DeclareMathOperator{\dom}{dom}

\newcommand{\qd}{\end{proof}\vspace{0.5ex}}

\newcommand{\Om}{\Omega}
\newcommand{\om}{\omega}

\newcommand{\si}{\sigma}

\newcommand{\la}{\lambda}
\newcommand{\eps}{\varepsilon}

\newcommand{\E}{{\mathcal E}}
\newcommand{\A}{{\mathcal A}}

\newcommand{\M}{{\mathcal M}}

\renewcommand{\P}{\mathcal P}

\newcommand{\N}{{\mathcal N}}

\newcommand{\T}{\mathcal T}

\newcommand{\U}{{\mathcal U}}

\newcommand{\xspace}{\hbox{\kern-2.5pt}}
\newcommand{\xyspace}{\hbox{\kern-1.1pt}}

\hyphenation{comm-ut-ta-ti-ve}

\oddsidemargin 0.3 cm

\evensidemargin 0.3 cm

\allowdisplaybreaks

\textwidth 16 cm \textheight 22.7cm
\newcommand{\ttt}{{\bf t}}

\begin{document}
\title{BMO spaces associated with semigroups of operators}

\author[M. Junge]{M. Junge}
\address{Department of Mathematics\\
University of Illinois, Urbana, IL 61801, USA} \email[Marius
Junge]{junge@math.uiuc.edu}

\author[T. Mei]{T. Mei}
\address{Department of Mathematics\\
Wayne state University, Detroit, MI, 48202, USA} \email[Tao
Mei]{mei@wayne.edu}

\begin{abstract} We study BMO spaces associated with semigroup
of operators on noncommutative function spaces (i.e. von Neumann
algebras) and apply the results to boundedness of Fourier
multipliers on non-abelian discrete groups. We prove an
interpolation theorem for BMO spaces and prove the boundedness of a
class of Fourier multipliers on noncommutative $L_p$ spaces for all
$1<p<\infty$, with optimal constants in $p$.
\end{abstract}


\thanks{ The first author is partially supported by the
NSF DMS -090145705. The second author is partially supported by
NSF DMS-0901009.}

\maketitle {\small {\bf Mathematics subject classification} (2000):
46L51 (42B25 46L10 47D06) \medskip

{\bf Key words.}\ \  interpolation, BMO spaces,  Fourier
multipliers, semigroups of positive operators, von Neumann algebras,
Brownian motion, noncommutative martingales. }

\section*{Introduction} The theory of semigroups provides a good framework of
studying classical questions from harmonic analysis in a more
abstract setting. Our research is particularly motivated by E.
Steins' results on Fourier multipliers on  $L_p$ spaces and
Littlewood-Paley theory for the Laplace-Beltrami operators on
compact groups. Our aim is to study BMO spaces which are
intrinsically defined by a (some kind of heat-) semigroup and prove
fundamental interpolation results.  In particular, we want to give a
positive answer to the following

  \begin{prob}\label{mainpb}  Let $(T_t)$ be a standard semigroup of selfadjoint positive
operators on an (abstract) functions space $L_\infty(\Omega)$. Let
$A$ be its infinitesimal generator. Is there a $BMO$ space such that
   \begin{enumerate}
   \item[(a)] $BMO$ serves as an endpoint of interpolation, i.e.
$[BMO,L_1(\Omega)]_{\frac1p}=L_p(\Omega)$;
   \item[(b)] The imaginary powers $A^{is}$, $s\in \rz$ extend to bounded
operators from $L_{\infty}(\Omega)$ to $BMO$.
   \item[(c)] The estimates in (b) are universal. In particular, the constants
involved are dimension free for all the classical heat semigroups on
$\Omega=\rz^n$.
  \end{enumerate}
  \end{prob}

We should expect much more singular integral operators for abstract
semigroups instead of the imaginary powers mentioned in (b).
However, it seems that even in the commutative theory such a $BMO$
space  has not yet been identified. An advantage of such a theory is
that it provides a natural framework for good, or even optimal
dimension free estimates, for Fourier multipliers. Our results apply
not only in the commutative, but also in the noncommutative setting
(i.e. replacing $L_\infty(\Omega)$ by a von Neumann algebra).

Indeed, BMO spaces, once they can be appropriately
defined, provide a very efficient tool in proving results on Fourier-multipliers. BMO spaces associated with semigroups on commutative functions spaces  have been
studied in \cite{Vo}, \cite{SV} and very recently in \cite{DuLi},
\cite{DuLi2}. Here `commutative function space' means that the semigroups of operators under investigation are defined on some $L_\infty(\Omega)$.  Note that  $L_\infty(\Omega)$ is the prototype of a  commutative von Neumann algebra. Even in this commutative setting a general theory of BMO spaces defined intrinsically
by the semigroup is far from established.

On the other hand, BMO spaces have been extended to noncommutative
function  spaces (i.e. von Neumann algebras) in various cases. Let
us refer to the seminal work on martingales in \cite{PX}, \cite{JX},
\cite{JM}, \cite{Mu} and \cite{PopaN}, and to \cite{NPTV}, \cite
{Mei1} and \cite{BP} for work on operator- or matrix-valued
functions. In   \cite{Mei} a first approach towards a  $H_1-BMO$
duality associated with semigroups of operators on von Neumann
algebras has been  obtained, whereas  a duality theory  for
Averson's subdiagonal algebras is studied in \cite{MW}.

As in the commutative case, BMO boundedness and interpolation
usually gives optimal or at least very good estimates for singular
integral operators on $L_p$. The use of BMO spaces also turns out to
be crucial when reducing results on group von Neumann algebras to
the semicommutative setting, see \cite{JMP}. Let us describe one of
our main results. Let $\N$ be a von Neumann algebra with a normal
trace $\tau$ satisfying $\tau(1)=1$, i.e. $(\N,\tau)$ is a
noncommuative probability space. Let $(T_t)$ be semigroup of
completely positive maps on $\N$ such that $\tau(T_t(x))=\tau(x)$
and $T_t(1)=1$. Then we define the $BMO_c$ column norm by
 \[ \|x\|_{BMO_c(\T)} \lel \sup_{t} \|T_t|x-T_tx|^2\|^{1/2} \]
and $\|x\|_{BMO(\T)}=\max\{
\|x\|_{BMO_c(\T)},\|x^*\|_{BMO_c(\T)}\}$. The norm
$\|x\|_{BMO_r(\T)}=\|x^*\|_{BMO_c(\T)}$ is called the row $BMO$ norm
and the need of both such norms is well-known from martingale
theory.

\begin{theorem} Assume that $T_t$ is a standard semigroup of completely positive maps on $\N$ and $(T_t)$ admits a Markov dilation. Then
 \[ [BMO(\T),L_1(\N)]_{\frac{1}{p}} \lel L_p(\N) \pl \] for
 $1<p<\infty$.
\end{theorem}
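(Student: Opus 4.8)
The plan is to obtain the interpolation identity by the standard two‑step strategy: (i) establish a good duality/equivalence at the two endpoints, and (ii) run a complex interpolation argument against the scale $L_p(\N)$, exploiting the Markov dilation to import martingale BMO technology. More precisely, I would first use the Markov dilation to realize $(T_t)$ inside a larger von Neumann algebra $\widehat{\N}\supset\N$ carrying a filtration $(\widehat{\N}_t)$ and normal trace‑preserving conditional expectations $\widehat{E}_t$, with $\pi\colon\N\to\widehat{\N}$ a trace‑preserving $*$‑monomorphism such that $T_t = E_{\N}\,\widehat{E}_t\,\pi$ (or the appropriate dilation identity). The key point is that the column $BMO(\T)$ norm of $x$ is then comparable, via the dilation, to a martingale $BMO$ norm of $\pi(x)$ in $\widehat{\N}$: one direction is immediate from $T_t|x-T_tx|^2 = E_\N \widehat{E}_t |\pi(x) - \widehat{E}_t\pi(x)|^2$ together with operator convexity, and the reverse needs the continuity/stationarity of the dilation. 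This reduces the statement to the known noncommutative martingale interpolation theorem $[BMO_{\mathrm{mart}}(\widehat\N),L_1(\widehat\N)]_{1/p} = L_p(\widehat\N)$ from \cite{PX}, \cite{JX}, \cite{Mu}, \cite{JM}, restricted and corestricted along $\pi$ and the conditional expectation $E_\N$, both of which are contractive on all the relevant spaces.

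Concretely, the two inclusions are proved separately. For $L_p(\N)\hookrightarrow [BMO(\T),L_1(\N)]_{1/p}$ I would take $x\in\N$ with $\|x\|_p\le 1$, write $\pi(x)\in L_p(\widehat\N)$, use the martingale interpolation result to factor $\pi(x)$ analytically between $\widehat{BMO}$ and $L_1(\widehat\N)$, and then push the analytic family back down by $E_\N$; since $E_\N$ maps $\widehat{BMO}_{\mathrm{mart}}$ into $BMO(\T)$ boundedly (this is where the dilation identity is used again, in the form $T_t E_\N = E_\N \widehat E_t$ on the range of $\pi$) and $E_\N\colon L_1(\widehat\N)\to L_1(\N)$ contractively, one recovers an analytic family witnessing membership of $x$ in the interpolation space. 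For the reverse inclusion $[BMO(\T),L_1(\N)]_{1/p}\hookrightarrow L_p(\N)$ I would dualize: it suffices to check the $1/p'$‑interpolation estimate between the predual $H_1(\T)$ (or rather between $L_{p'}(\N)$‑type bounds) and $L_\infty$, using the $H_1$–$BMO$ duality for semigroups established in \cite{Mei} together with the self‑duality of the $L_p$ scale; equivalently, map up to $\widehat\N$, apply the martingale estimate, and map down. One must be slightly careful that the column/row symmetrization $BMO(\T)=\max\{BMO_c,BMO_r\}$ interpolates correctly, but this follows as in the martingale case since intersection spaces interpolate with $L_1$ to give $L_p$ once each summand does (using that $L_p$ is an intersection of its column and row structures only up to constants, so one actually interpolates $BMO_c$ with $L_1$ to land in the Hardy–$L_p$ space, then symmetrizes).

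The main obstacle I expect is the equivalence between the intrinsic semigroup $BMO(\T)$ norm and a genuine martingale $BMO$ norm in the dilation — in particular the inequality bounding the martingale norm of $\pi(x)$ by $\|x\|_{BMO(\T)}$, which is the nontrivial direction and which a priori requires some regularity of the dilation (e.g. that it be a \emph{continuous} Markov dilation, or at least that the discrete skeleton $\widehat E_{2^{-n}}$ be comparable uniformly in $n$). A clean way around a direct comparison is to not insist on an exact norm equivalence but only on the two boundedness facts actually used: $E_\N\colon \widehat{BMO}_{\mathrm{mart}}\to BMO(\T)$ bounded, and a compatible bound in the other direction at the level of the analytic families (so that only \emph{one} of the two inclusions needs the dilation in each direction). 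This is the step I would spend the most care on; the complex‑interpolation bookkeeping, the contractivity of $\pi$ and $E_\N$ on each $L_p$ and on $BMO$/$H_1$, and the column–row symmetrization are all routine given the cited martingale and semigroup $H_1$–$BMO$ results.
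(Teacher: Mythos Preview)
Your overall strategy---reduce to martingale interpolation via the dilation---is the right idea, and you correctly flag the delicate point. But your workaround for that point does not close the gap, and the paper's actual route is substantially different from what you outline.

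First, the inclusion $L_p(\N)\subset [BMO(\T),L_1(\N)]_{1/p}$ is trivial and needs no dilation: since $\|x\|_{BMO(\T)}\le 2\|x\|_\infty$, we have $\N\subset BMO(\T)$ contractively up to a constant, and then $L_p=[\N,L_1]_{1/p}\subset [BMO(\T),L_1]_{1/p}$. Your proposed detour through $\widehat\N$ for this direction is unnecessary.

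The genuine content is in $[BMO(\T),L_1(\N)]_{1/p}\subset L_p(\N)$. Here your direct use of the dilation $\pi_0$ runs into exactly the obstacle you name, and your workaround does not avoid it. The issue is that $\pi_0$ sends $BMO(\T)$ isometrically into martingale $bmo(\M)$ (the paper's Proposition~\ref{bmo1}), but the martingale interpolation identity $[X,L_1(\M)]_{1/p}=L_p(\M)$ holds for $X=BMO(\M)=bmo(\M)\cap (h_1^d(\M))^*$, \emph{not} for $bmo(\M)$ alone. The difference is precisely the diagonal (jump) part $h_1^d$, and killing it requires almost-uniform path continuity of the dilated martingales (Lemma~\ref{ctmar}). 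Without that extra hypothesis you only get $\pi_0([BMO(\T),L_1]_{1/p})\subset [bmo(\M),L_1(\M)]_{1/p}$, which is not known to be $L_p(\M)$. Your alternative of invoking a semigroup $H_1$--$BMO$ duality is also not available: the paper lists this as an open problem.

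The paper avoids the path-continuity issue for $BMO(\T)$ by a different mechanism. It first proves the analytic inequality $\|f\|_{BMO(\partial)}\le c\,\|f\|_{BMO(\T)}$ (Theorem~\ref{lem0}), where $BMO(\partial)$ is defined via the subordinated Poisson semigroup and the square function $P_t\int_0^t|P_s'f|^2\,s\,ds$. It then builds \emph{Meyer's model}: augment the Markov dilation by an independent Brownian motion $B_t$ starting at $a$, stopped at $\tau_a=\inf\{t:B_t=0\}$, and consider the martingale $\hat n_a(f)=\pi_{\tau_a\wedge t}(P_{B_{\tau_a\wedge t}}f)$. Projecting onto the Brownian part $P_{br}$ yields a stochastic integral, which has continuous path \emph{automatically}; hence $bmo=BMO$ for $P_{br}\hat n_a(f)$ (Lemma~\ref{ctmar}), and Lemma~\ref{bmo2}(ii) shows $\|P_{br}\hat n_a(f)\|_{BMO(\hat\M_a)}\simeq \|f\|_{BMO(\partial)}$. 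Now the martingale interpolation theorem applies in $\hat\M_a$, and a separate $L_p$ comparison (Lemma~2.5.11 of \cite{JMr}) recovers $\|f\|_p$ from $\|P_{br}\pi_{\tau_a}f\|_p$ as $a\to\infty$. The result for $BMO(\T)$ then follows from the sandwich $\N^0\subset BMO(\T)\subset BMO(\partial)$.

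In short: the missing idea is the passage through the subordinated semigroup and the Brownian coupling, which manufactures path continuity rather than assuming it of the dilation.
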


We investigate other possible intrinsic choices for $BMO$-norms and
compare them. These results are applied to BMO-boundedness of
Fourier multipliers on non-abelian discrete groups. We obtain their
corresponding $L_p$-boundedness with optimal constants. Basic
examples of Fourier multipliers in this article are noncommutative
analogues of E. Stein's imaginary power $(-\triangle)^{i\gamma}$
(see Theorem \ref{stein1} in Example \ref{stein}) and noncommutative
analogues of P. A. Meyer's generalized Riesz transforms (see Theorem
\ref{meyer}). A further application of our results gives optimal
constants in Junge/Xu's noncommutative maximal ergodic inequality
(see \cite{JX3}). Many of our results are new even in the
commutative setting. In particular, our constants of the $L_p$
bounds of Stein's universal Fourier multipliers are better than
those obtained by Stein (\cite{St}) and Cowling(\cite{Cow}) (see
Remark \ref{better}).

\section{Preliminaries and notation}
\subsection{Noncommutative $L_p$ spaces.}
Let $\mathcal{N}$ be a von Neumann algebra equipped with a normal
semifinite faithful trace $\tau $. Let $\mathcal{S}_{+}$ be the set
of all positive $f\in \mathcal{N} $ such that $\tau
(\mathrm{supp}(f))<\infty $, where $\mathrm{supp}(x)$ denotes the
support of $f$, i.e. the least projection $e\in \mathcal{N}$
such that $ef=f$. Let $\mathcal{S}_{\mathcal{N}}$ be the linear span of $%
S_{+}$. Note that $\mathcal{S}_{\mathcal{N}}$ is an involutive
strongly dense ideal of $\mathcal{N}$. For $0<p<\infty $ define
\[
\Vert f\Vert _p=\big(\tau (|f|^p)\big)^{1/p}\,,\quad x\in \mathcal{S}_{%
\mathcal{N}},
\]
where $|f|=(f^{*}f)^{1/2}$, the modulus of $x$. One can check that
$\Vert \cdot \Vert _p$ is a norm or $p$-norm on
$\mathcal{S}_{\mathcal{N}}$ according to $p\geq 1$ or $p<1$. The
corresponding completion is the noncommutative $L_p$-space
associated with $(\mathcal{N},\tau )$ and is
denoted by $L_p(\mathcal{N})$. By convention, we set $L^\infty (\mathcal{N})=%
\mathcal{N}$ equipped with the operator norm $\|\cdot\|$. The elements of $L_p(\mathcal{N%
})$ can be also described as measurable operators with respect to $(\mathcal{%
N},\tau )$. We refer to \cite{PX-surv} for more information and for
more historical references on noncommutative $L_p$-spaces. In the
sequel, unless explicitly stated otherwise, $\mathcal{N}$ will
denote a semifinite von Neumann algebra and $\tau $ a normal
semifinite faithful trace on $\mathcal{N}$. We will simplify
$L_p({\mathcal N})$ as $L_p$ and the corresponding norms as
$\|\cdot\|_p$.

We say an operator $T$ on $\mathcal{N}$ is completely contractive if $%
T\otimes I_n$ is contractive on $\mathcal{N}\otimes M_n$ for each $n$. Here,
$M_n$ is the algebra of $n$ by $n$ matrices and $I_n$ is the identity
operator on $M_n$. We say an operator $T$ on $\mathcal{M}$ is completely
positive if $T\otimes I_n$ is positive on $\mathcal{N}\otimes M_n$ for each $%
n$. We will need the following Kadison-Schwarz inequality for unital
completely positive contraction $T$ on $L_p(\mathcal{N})$,
\begin{eqnarray}\label{KS}
|T(f)|^2\leq T(|f|^2),\ \ \ \ \forall f\in L_p(\mathcal{N}).
\end{eqnarray}

\subsection{Standard noncommutative semigroups}

Throughout this article we will assume that $(T_t)$ is a semigroup
of completely positive maps on a semifinite von Neumann algebra $\N$
satisfying the following \emph{standard assumptions} \
 \begin{enumerate}
 \item[i)] Every $T_t$ is a  normal completely positive maps on
 $\N$ such that $T_t(1)=1$;
  \item[ii)] Every $T_t$ is selfadjoint with respect to the trace
  $\tau$, i.e. $\tau(T_t(f)g)=\tau(fT_t(g))$;
  \item[iii)] The family $(T_t)$ is strongly continuous, i.e. $\lim_{t\rightarrow0} T_tf=f$ with respect to the strong topology in $\N$ for any $f\in \N$.
 \end{enumerate}

Let us note that (i) and (ii) imply that $\tau(T_tx)= \tau(x)$ for
all  $x$, so $T_t$'s are faithful and are contractive on $L_1(\N)$.
By interpolation, $T_t$'s extend to contractions on $L_p(\N), 1\leq
p<\infty$ and satisfy $\lim_{t\rightarrow0} T_tx=x$ in $L_p(\N)$ for
all $x\in L_p(\N)$. (see \cite{JX3} for details). Some of these
conditions can be weakened, but this is beyond the scope of this
article.

Let us recall that such  a semigroup admits an infinitesimal
(negative) generator $A$ given as $Af=\lim_{t\to 0}
t^{-1}(f-T_t(f))$ defined on $\dom(A)=\cup_{1\leq p\leq \infty}
\dom_p(A),$ where
 \[  \dom_p(A)\lel \{f\in L_p(\N); \lim_{t\to 0} t^{-1}(T_t(f)-f) {\rm\ converges\ in\ } L_p(\N)\} \pl .\]
It is easy to see that $\frac1s\int_0^sT_t(f)dt\in \dom_p(A)$ for
any $s>0, f\in L_p(\N)$, so $\dom_p(A)$ is dense in $L_p(\N)$.
Denote by $A_p$ the restriction of $A$ on $\dom_p(A)$. Under our
assumptions (i)-(iii), $A_2$ is a positive (unbounded) operator.
$A_pT_t=T_tA_p=-\frac{\partial T_t}{\partial t}$ extend to a (same)
bounded operator on $L_p(\N)$ for all $t>0,1\leq p\leq \infty$.
Therefore, $T_s(f)\in \dom_p(A)$ for any $f\in L_p(\N)$, $1\leq
p\leq \infty$.

For a standard semigroup $T_s$ (generated by $A$), we may consider the
\emph{subordinated Poisson semigroup} $\P = (P_t)_{t \ge 0}$ defined
by $P_t = \exp(-tA^\frac12)$. $(P_t)$ is again a  semigroup of
operators satisfying (i)-(iii) above. Note that $P_t$ satisfies
$(\partial_t^2-A)P_t=0$.  By functional
calculus and an elementary identity, each $P_t$ can be written as
(see e.g.  \cite{Stein}),
 \begin{equation}\label{ps-form}
  P_t \lel \frac 1{2\sqrt{\pi }}\int_0^\infty te^{-\frac{t^2}{4u}}u^{-\frac
 32}T_udu.
 \end{equation}
 The integral on the right hand side of the identity converges with respect to the
operator norm on $L_p(N)$ for $1\leq p\leq \infty$. Let us define
the gradient form $\Gamma$ associated with $T_t$,
 \[ 2\Gamma(f,g) \lel (A(f^{*})g)+f^{*}(A(g))-A(f^{*}g) ,\]
for $f,g$ with $f^*,g, f^*g\in \dom(A)$. For convenience, we assume
that there exists a $^*$-algebra $\A$ which is weak$^*$ dense in
$\N$ such that $T_s(\A)\subset\A\subset \dom(A)$. This assumption is
to guarantee that $\Gamma(T_sf,T_sg)$ make senses for $f,g\in \A$,
which is not easy to verify in general, although the other form
$T_t\Gamma(T_sf,T_sg)$ is what we need essentially in this article
and can be read as
$T_t((AT_sf^{*})T_sg)+T_t(T_sf^{*}(AT_sg))-AT_t(T_sf^{*}T_sg)$ for
any $f,g\in L_p(\N),1\leq p\leq\infty, s,t>0$. The semigroup $(T_t)$
generated by $A$ is said to satisfy the $\Gamma^2\geq0$ if
 \[ \Gamma (T_v f, T_vf) \le T_v \Gamma (f,f)\]
for all $v>0,f\in \A$. It is easy to see $\Gamma^2\gl 0$ also
implies $\Gamma(P_vf,P_vf)\le P_v\Gamma(f,f)$ for any $v>0$. Denote
by the gradient form associated with $(P_t)_t$ by
$\Gamma_{A^\frac12}$.

We will need the following Lemma proved in \cite{Jur} and
\cite{JMr}. We add a short proof for the convenience of the reader.
\begin{lemma}\label{lemma}
 (i) For any $f\in L_p(\N),1\leq p\leq\infty, s>0$, we have
 \[ T_s|f|^2-|T_sf|^2 \lel 2 \int_{0}^s  T_{s-t}\Gamma(T_tf,T_tf) dt \pl .\]

 (ii) For any $f\in \A$, we have
 \[ \Gamma_{A^\frac12}(f,f) \lel \int_0^{\infty}
 P_{v}\Gamma(P_{v}f,P_{v}f) dv +
 \int_0^{\infty}
 P_{v}|P_{v}'f|^2  dv \]
For any $f\in L_p(\N), s, t>0$, we have
 \[P_t\Gamma_{A^\frac12}(P_sf,P_sf) \lel \int_0^{\infty}
 P_{t+v}\Gamma(P_{s+v}f,P_{s+v}f) dv +
 \int_0^{\infty}
 P_{t+v}|P_{s+v}'f|^2  dv.\]

 Here and in the rest of the article, $f_t'$ means $\frac{df_t}{dt}$.
\end{lemma}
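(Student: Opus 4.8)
The plan is to establish both identities by differentiating appropriate one-parameter families of operators and using the semigroup property together with the definition of the gradient form $\Gamma$.

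\medskip

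For part (i), I would fix $f\in L_p(\N)$ and $s>0$ and consider the function $t\mapsto T_{s-t}|T_tf|^2$ on $[0,s]$. Differentiating in $t$ (the derivative exists in $L_{p/2}(\N)$ because $T_tf\in\dom_p(A)$ for $t>0$ and $A_pT_t$ is bounded, as recalled before the lemma), the chain rule gives
\[ \frac{d}{dt}\,T_{s-t}|T_tf|^2 \lel -A T_{s-t}|T_tf|^2 + T_{s-t}\big((AT_tf)^*T_tf + (T_tf)^*(AT_tf)\big)^{-},\]
where I am using $\partial_t T_t = -AT_t$; after re-inserting the sign carefully this is exactly $2\,T_{s-t}\Gamma(T_tf,T_tf)$ by the definition $2\Gamma(g,g)=(Ag)^*g+g^*(Ag)-A(g^*g)$. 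Integrating from $0$ to $s$ and evaluating the endpoints ($t=0$ gives $T_s|f|^2$, $t=s$ gives $|T_sf|^2$) yields the claim. The integrand makes sense because of the standing assumption that there is a weak$^*$-dense $^*$-algebra $\A$ with $T_s(\A)\subset\A\subset\dom(A)$, and for general $f\in L_p$ one first proves the identity for $f\in\A$ and then passes to the limit, noting that the right-hand side only involves the combination $T_{s-t}\Gamma(T_tf,T_tf)$ which, as the paper notes, can be read as $T_{s-t}\big((AT_tf)^*T_tf+(T_tf)^*AT_tf - A(|T_tf|^2)\big)$ and hence extends continuously in $f\in L_p(\N)$.

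\medskip

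For part (ii), I would apply part (i) — or rather its analogue for the Poisson semigroup $(P_t)$, whose generator is $A^{1/2}$ — but one must account for the extra term coming from the fact that $P_t$ solves the second-order equation $(\partial_t^2-A)P_t=0$ rather than a first-order heat equation. Concretely, consider $v\mapsto P_{t}\big(P_{v}\Gamma_{A^{1/2}}(P_{s+v-v}f,\dots)\big)$; more precisely I would differentiate $v\mapsto P_{t+v}\,\Gamma(P_{s+v}f,P_{s+v}f) + P_{t+v}|P_{s+v}'f|^2$ and show that, after using $2\Gamma(g,g)=(A^{1/2}\!\cdot)\dots$ is \emph{not} the right object — rather one uses that for the Poisson semigroup $\Gamma_{A^{1/2}}(g,g) = \tfrac12\big(A^{1/2}(g^*g) $ reorganized via $A = (A^{1/2})^2$ and $P_v' = -A^{1/2}P_v$, $P_v'' = AP_v$. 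The point is that differentiating the right-hand side integrand in $v$ produces a telescoping combination whose boundary term at $v=0$ is $P_t\Gamma_{A^{1/2}}(P_sf,P_sf)$ and whose boundary term at $v=\infty$ vanishes (since $P_{t+v}f\to 0$ and the $\Gamma$-terms are dominated, using $\Gamma^2\ge0$ to bound $P_{t+v}\Gamma(P_{s+v}f,P_{s+v}f)\le \Gamma(P_{(t+s)/2+\dots}f,\dots)\to 0$). The first displayed formula in (ii) is the special case $t=s=0$ of the second (interpreting $P_0=\mathrm{id}$ and restricting to $f\in\A$ so that all terms are literally defined), so it suffices to prove the general one.

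\medskip

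The main obstacle I expect is bookkeeping the extra derivative term in part (ii): unlike the heat semigroup, $P_v$ does not satisfy $\partial_v P_v = -A^{1/2}\Gamma$-compatible identity directly — one has $\partial_v(P_v g^* P_v g) = (P_v'g)^*(P_vg)+(P_vg)^*(P_v'g)$ and then a \emph{second} differentiation brings in $P_v''=AP_v$, which is what couples to $\Gamma$ via $A$. Getting the signs and the factor matching so that precisely the two integrals $\int P_{t+v}\Gamma(P_{s+v}f,P_{s+v}f)\,dv$ and $\int P_{t+v}|P_{s+v}'f|^2\,dv$ appear (rather than, say, a single combined integral) requires writing $\frac{d^2}{dv^2}\big(P_{t+v}|P_{s+v}f|^2\big)$ out fully, recognizing $-A P_{t+v}|P_{s+v}f|^2 + 2P_{t+v}|P_{s+v}'f|^2 + P_{t+v}(\text{cross terms with }A)$, and then identifying the surviving piece as $2P_{t+v}\Gamma_{A^{1/2}}$-type; integrating once in $v$ and controlling the boundary terms via $\Gamma^2\ge 0$ and strong continuity finishes it. Convergence of all integrals is guaranteed by the norm convergence of \eqref{ps-form} and the contractivity of $P_t$ on every $L_p(\N)$.
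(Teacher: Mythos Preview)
Your treatment of part (i) is correct and is exactly what the paper does: set $F_t=T_{s-t}|T_tf|^2$, differentiate in $t$, recognize $-2T_{s-t}\Gamma(T_tf,T_tf)$, and integrate.

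For part (ii), however, there is a genuine gap. The function you eventually propose to differentiate twice, $H(v)=P_{t+v}|P_{s+v}f|^2$, does \emph{not} yield the integrand. If you carry out the computation (say with $s=t=0$ and $g=P_vf$, $g'=-A^{1/2}g$, $g''=Ag$), you get
\[
H''(v)\;=\;2P_v\Gamma(g,g)+2P_v|g'|^2\;+\;4AP_v(g^*g)+4A^{1/2}P_v\Gamma_{A^{1/2}}(g,g),
\]
and the last two terms do not cancel. So ``writing $\tfrac{d^2}{dv^2}\big(P_{t+v}|P_{s+v}f|^2\big)$ out fully'' leaves you with extra junk that you cannot integrate away.

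The paper's key step is to identify the right first-order object to differentiate \emph{once}: set
\[
F_v \;=\; \partial_v P_v\big(|P_vf|^2\big)\;-\;P_v\big((\partial_vP_vf^*)P_vf\big)\;-\;P_v\big(P_vf^*(\partial_vP_vf)\big),
\]
which, if you unwind $\partial_vP_v=-A^{1/2}P_v$, is nothing but $2P_v\Gamma_{A^{1/2}}(P_vf,P_vf)$. Then a single differentiation, using $\partial_v^2P_v=AP_v$, gives cleanly
\[
F_v' \;=\; -2P_v\Gamma(P_vf,P_vf)\;-\;2P_v|P_v'f|^2,
\]
with no leftover terms, and integrating $F_0-F_\infty=\int_0^\infty(-F_v')\,dv$ gives the first formula. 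The second formula follows by applying $P_t$ and replacing $f$ by $P_sf$. Your earlier remark that ``the boundary term at $v=0$ is $P_t\Gamma_{A^{1/2}}(P_sf,P_sf)$'' was actually pointing at the correct antiderivative --- you just never wrote down what function has that value at $v=0$.

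One more point: you invoke $\Gamma^2\ge 0$ to control the boundary term at infinity. The lemma does not assume $\Gamma^2\ge 0$ (that hypothesis enters only in later results), and the paper establishes $F_v\to 0$ without it, using only strong continuity and the explicit form of $F_v$.
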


\begin{proof} (i): For $s$ fixed, let
\[
F_t=T_{s-t}(|T_tf|^2).
\]
Then
\begin{eqnarray*}
\frac{\partial T_{s-t}(|T_tf|^2)}{\partial t} &=&\frac{\partial T_{s-t}}{%
\partial t}(|T_tf|^2)+T_{s-t}[(\frac{\partial T_t}{\partial t}%
f^{*})f]+T_{s-t}[f^*(\frac{\partial T_t}{\partial t}f)] \\
&=&-T_{s-t}\Gamma (T_tf,T_tf).
\end{eqnarray*}
Therefore
\begin{eqnarray*}
T_s|f|^2-|T_sf|^2 =-F_s+F_0=\int_0^sT_{s-t}\Gamma (T_tf,T_tf)dt.
\end{eqnarray*}

(ii): Let
\[
F_t=\frac{\partial P_t}{\partial t}(|P_tf|^2)-P_t(\frac{\partial P_tf^{*}}{%
\partial t}P_tf)-P_t(P_tf^{*}\frac{\partial P_tf}{\partial t}).
\]
Then
\begin{eqnarray*}
\frac{\partial F_t}{\partial t} &=&\frac{\partial ^2P_t}{\partial t^2}%
(|P_tf|^2)-P_t(\frac{\partial ^2P_tf^{*}}{\partial t^2}P_tf)-P_t(P_tf^{*}%
\frac{\partial ^2P_tf}{\partial t^2})-2P_t(|\frac{\partial P_tf}{\partial t}%
|^2) \\
&=&-AP_t(|P_tf|^2)-P_t[(-AP_tf^{*})P_tf]-P_t[P_tf^{*}(-AP_tf)]-2P_t(|\frac{%
\partial P_tf}{\partial t}|^2) \\
&=&-P_t\Gamma (P_tf,P_tf)-2P_t(|P_t^{\prime }f|^2).
\end{eqnarray*}
Note that $F_0=\Gamma _{A^\frac 12}(f,f)$ and $F_t\rightarrow 0$ in ${\mathcal N}$ as $%
t\rightarrow \infty$ because of Proposition 1.1. We get
\begin{align*}
\Gamma _{A^\frac 12}(f,f) &=\int_0^\infty -\frac{\partial
F_t}{\partial t}dt  \lel \int_0^\infty P_t\Gamma
(P_tf,P_tf)dt+2\int_0^\infty P_t(|P_t^{\prime }f|^2)dt. \qedhere
\end{align*}
\end{proof}

We will use the following inequality from
\cite{Mei}.

\begin{prop}\label{monot} Let $f\in \N$ be positive  and $0<t<s$. Then
 \[ P_sf\kl  \frac{s}{t} \pl  P_tf  \pl .\]
\end{prop}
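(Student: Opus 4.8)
The plan is to express both $P_t f$ and $P_s f$ via the subordination formula \eqref{ps-form} and reduce the claimed operator inequality to a pointwise scalar inequality between the two kernels. Writing
\[
P_t f \lel \frac{1}{2\sqrt{\pi}} \int_0^\infty t\, e^{-\frac{t^2}{4u}}\, u^{-\frac32}\, T_u f \, du ,
\]
and similarly for $P_s f$, and using that $f$ is positive together with the positivity of each $T_u$, it suffices to show that
\[
\frac{s}{t}\, t\, e^{-\frac{t^2}{4u}}\, u^{-\frac32} \gl s\, e^{-\frac{s^2}{4u}}\, u^{-\frac32}
\qquad\text{for all } u>0 ,
\]
i.e. after cancelling the common positive factor $s\, u^{-\frac32}$, that $e^{-\frac{t^2}{4u}} \gl e^{-\frac{s^2}{4u}}$. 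This last inequality is immediate from $0<t<s$, since then $\frac{t^2}{4u} < \frac{s^2}{4u}$ and the exponential is decreasing. Integrating the kernel inequality against the positive operator-valued "measure" $T_u f\, du$ preserves the order, giving $\frac{s}{t} P_t f - P_s f \gl 0$.

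The one point that needs a little care is the justification that integrating a pointwise kernel inequality against $T_u f\, du$ yields an operator inequality in $\N$: this uses that each $T_u$ is positive (so $T_u f \gl 0$ for $f\gl 0$) and that the integrals converge in the operator norm on $L_p(\N)$ for $1\leq p\leq\infty$, as recorded after \eqref{ps-form}; hence for any vector state (or more generally any positive normal functional) $\varphi$ one has $\varphi\big(\tfrac{s}{t}P_tf - P_sf\big) = \frac{1}{2\sqrt\pi}\int_0^\infty \big(\tfrac{s}{t} t e^{-t^2/4u} - s e^{-s^2/4u}\big) u^{-3/2}\, \varphi(T_u f)\, du \gl 0$, since the integrand is nonnegative. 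As $\varphi$ ranges over all positive normal functionals this gives the operator inequality. I do not expect any genuine obstacle here; the whole argument is just the observation that the subordination kernel $t\mapsto t\, e^{-t^2/4u}$, once renormalized by dividing by $t$, is monotone in the right direction in $t$. The only mild subtlety is bookkeeping the convergence of the improper integral so that the order-preservation step is rigorous, but this is covered by the norm convergence already asserted in the text.
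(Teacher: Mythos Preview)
Your argument is correct and is essentially identical to the paper's own proof: both divide the subordination formula \eqref{ps-form} by the parameter and reduce to the scalar inequality $e^{-s^2/4u}\le e^{-t^2/4u}$, integrating against the positive $T_u f$. The extra care you take with positive normal functionals to justify the order-preservation under the integral is a harmless elaboration of the same step.
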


\begin{proof} We use \eqref{ps-form} and $e^{-\frac{s^2}{4u}}\le
e^{-\frac{t^2}{4u}}$ for all $u$. This yields the assertion
\begin{align*}
 \frac{P_sf}s &=\frac 1{2\sqrt{\pi }}\int_0^\infty e^{-\frac{s^2}{4u}}u^{-\frac 32}T_u(f)du  \kl  \frac 1{2\sqrt{\pi }}\int_0^\infty
 e^{-\frac{t^2}{4u}}u^{-\frac 32}T_u(f)du \lel
 \frac{P_tf}t \pl . \qedhere
\end{align*}\end{proof}

\section{BMO norms associated with semigroups of operators}
 In
this part we study several natural BMO-norms associated with a
semigroup $T_t$ of completely positive maps. The situation is
particularly nice for subordinated semigroups so that the original
semigroup satisfies the $\Gamma^2\geq0$.

Given  a standard semigroup of operators $T_t$ on $\N$ and $f\in
\N\cup L_2(\N)$, we define
 \begin{eqnarray}
  \|f\|_{bmo^c(\T)} &\lel& \sup_t \|
 T_t|f|^2-|T_tf|^2\|^{\frac12}\pl, \\
  \|f\|_{BMO^c(\T)} &\lel& \sup_t \|
 T_t|f-T_tf|^2\|^{\frac12}\pl.
 \end{eqnarray}
Here and in what follows $\|f\|$ always denote the operator norm
of $f$. The notations  $\|\cdot\|_{bmo^c(\P)},
\|\cdot\|_{BMO^c(\P)}$ are used when $\T$ is replaced by the
 subordinated semigroup $(P_t)$ above. The definitions steam from
Garsia's norm for the Poisson semigroup on the circle (see
\cite{Koo}). The $BMO^c(\T)$-norm has been studied in \cite{Mei},
motivated by the expression
 \[ \|f\|_{BMO_1} \lel  \sup_z P_z(|f-f(z)|)  \pl .\]
This definition appeared in the commutative case in particular in
\cite{SV} , \cite{Vo} and \cite{DuLi2}. Using the $\| \pl
\|_{BMO_1}$-norm, it is easy to show that the conjugation operator
is bounded from $L_{\infty}$ to BMO$_1$. Here $f(z)$ gives the value
of the harmonic extension in the interior of the circle (see
\cite{Ga}). In some sense $f-P_tf$ is similar to $f-f(z)$, despite
the fact that the Poisson integral $P_tf$ still is a function, while
$f(z)$ is considered as a constant function in $f-f(z)$.

\begin{prop}  Let  $(T_t)$ be a standard semigroup of operators. Then
$bmo^c(\T)$ and $BMO^c(\T)$ are semi-norms on $\N$.
\end{prop}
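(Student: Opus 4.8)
The plan is to reduce everything to two standard facts: a finite pointwise supremum of semi-norms is a semi-norm, and the ``length function'' $\xi\mapsto B(\xi,\xi)^{1/2}$ of a positive semi-definite Hermitian sesquilinear form is a semi-norm (by Cauchy--Schwarz). Nonnegativity and positive homogeneity of both $\|\cdot\|_{bmo^c(\T)}$ and $\|\cdot\|_{BMO^c(\T)}$ are immediate, since $T_t|f|^2-|T_tf|^2$ and $T_t|f-T_tf|^2$ are homogeneous of degree two in $f$; moreover $\|T_t|f|^2-|T_tf|^2\|\le\|T_t|f|^2\|+\||T_tf|^2\|\le 2\|f\|^2$ and $\|T_t|f-T_tf|^2\|\le\|f-T_tf\|^2\le 4\|f\|^2$ (recall $T_t$ is a unital positive, hence contractive, map), so the suprema over $t$ are finite on all of $\N$ and it only remains to prove the triangle inequality.

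For $\|\cdot\|_{bmo^c(\T)}$, fix $t>0$ and a state $\phi$ on $\N$ and put $B_{\phi,t}(f,g)=\phi\big(T_t(f^{*}g)\big)-\phi\big((T_tf)^{*}(T_tg)\big)$, $f,g\in\N$. This form is sesquilinear, and since $T_t$ is $*$-preserving (being positive) and $\phi(x^{*})=\overline{\phi(x)}$ it is Hermitian. It is positive semi-definite: by the Kadison--Schwarz inequality \eqref{KS} for the unital completely positive map $T_t$ we have $|T_tf|^2\le T_t(|f|^2)$, whence $B_{\phi,t}(f,f)=\phi\big(T_t(|f|^2)-|T_tf|^2\big)\ge 0$. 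Consequently $f\mapsto B_{\phi,t}(f,f)^{1/2}$ is a semi-norm on $\N$. Since $T_t(|f|^2)-|T_tf|^2\ge 0$, its operator norm equals $\sup_{\phi}\phi\big(T_t(|f|^2)-|T_tf|^2\big)$ over states $\phi$, so $\|f\|_{bmo^c(\T)}=\sup_{t,\phi}B_{\phi,t}(f,f)^{1/2}$ is a supremum of semi-norms, hence a semi-norm.

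For $\|\cdot\|_{BMO^c(\T)}$ the argument is the same, and slightly simpler. Fix $t>0$ and a state $\phi$; then $\psi:=\phi\circ T_t$ is again a state (because $T_t$ is unital and positive), and $T_t|f-T_tf|^2\ge 0$, so $\|f\|_{BMO^c(\T)}^2=\sup_{t,\phi}\psi\big((f-T_tf)^{*}(f-T_tf)\big)$. For fixed $t$ and $\phi$ the map $f\mapsto\psi\big((f-T_tf)^{*}(f-T_tf)\big)^{1/2}$ is the composition of the linear map $f\mapsto f-T_tf$ with the GNS semi-norm $h\mapsto\psi(h^{*}h)^{1/2}$ of the state $\psi$, hence a semi-norm; taking the supremum over $t$ and $\phi$ concludes. (An equivalent, state-free route: applying \eqref{KS} to $T_t\otimes I_2$ shows that the $2\times 2$ matrix over $\N$ with diagonal entries $T_t|f|^2-|T_tf|^2$, $T_t|g|^2-|T_tg|^2$ and off-diagonal entry $T_t(f^{*}g)-(T_tf)^{*}(T_tg)$ is positive; factoring it as $N^{*}N$ and adding the two columns of $N$ gives $\|T_t|f+g|^2-|T_t(f+g)|^2\|^{1/2}\le\|T_t|f|^2-|T_tf|^2\|^{1/2}+\|T_t|g|^2-|T_tg|^2\|^{1/2}$ at once, and likewise for the $BMO^c$ norm.)

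There is no real obstacle here: the whole argument is formal once one observes that, for each fixed $t$, each of the two quantities is the length function of a positive semi-definite Hermitian form (equivalently, a semi-norm pulled back along a linear map). The only nonformal ingredient is the scalar inequality $B_{\phi,t}(f,f)\ge 0$, i.e. Kadison--Schwarz \eqref{KS}; the only points requiring mild care are the Hermitian symmetry of $B_{\phi,t}$ (needed so that Cauchy--Schwarz yields the triangle inequality) and the finiteness of the norms on $\N$ noted above.
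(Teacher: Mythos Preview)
Your proof is correct, but the route differs from the paper's. The paper fixes $t$ and invokes the GNS/Stinespring construction for the unital completely positive map $T_t$: it builds the Hilbert $C^*$-module $\mathcal{L}(\N\otimes_{T_t}\N)$ with $\N$-valued inner product $\langle a\otimes b,c\otimes d\rangle=b^*T_t(a^*c)d$, obtains a $*$-homomorphism $\pi$ with $T_t(f)=e_{11}\pi(f)e_{11}$, and then identifies
\[
\|T_t|f|^2-|T_tf|^2\|^{1/2}=\|(1-e_{11})\pi(f)e_{11}\|,\qquad
\|T_t|f-T_tf|^2\|^{1/2}=\|\pi(f-e_{11}\pi(f)e_{11})e_{11}\|,
\]
so each is the operator norm of an $f$-linear map and hence a semi-norm. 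Your argument instead reduces to scalar-valued positive semi-definite Hermitian forms by pairing with states and then applies Cauchy--Schwarz; this is more elementary and entirely self-contained (no Hilbert $C^*$-modules needed). Your parenthetical ``state-free'' alternative---using Kadison--Schwarz for $T_t\otimes I_2$ to get a positive $2\times 2$ matrix over $\N$ and then a column factorization---is in spirit closer to the paper's module picture but still avoids the Stinespring machinery. What the paper's approach buys is a concrete embedding of $\N$ into a Hilbert module, a viewpoint it returns to later (see the remark immediately after the proof and the module constructions in Section~5); what your approach buys is a shorter, more portable argument that works verbatim for any unital (completely) positive map.
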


\begin{proof} Fix $t>0$.  Let
${\mathcal L}(\N\ten_{T_t}\N)$ be the Hilbert $C^*$-module over $\N$
with $\N$-valued inner product
 \[ \langle a\ten b,c\ten d\rangle \lel b^*T_t(a^*c)d \pl. \]
This Hilbert $C^*$-module is well-known from the GNS-construction
for $T_t$, see \cite{La}. Since $T_t$ is unital, we have a
$^*$-homomorphism $\pi:{\mathcal N}\to {\mathcal L}({\mathcal
N}\ten_{ T_t}\N)$ such that
 \[ T_t(f) \lel e_{11}\pi(f)e_{11} \pl .\]
We then get
  \begin{align*}  & T_t(f^*f)-T_t(f^*)T_t(f)
  \lel e_{11}\pi(f)^*\pi(f) e_{11}-
e_{11}\pi(f)^*e_{11}e_{11}\pi(f)
 e_{11}\\
 &= e_{11}\pi(f)^*(1-e_{11})\pi(f)e_{11}
 \pl .\end{align*}
Therefore,
\begin{eqnarray*}
\|T_t|f|^2-|T_tf|^2\|^{\frac12}=\|(1-e_{11})^{\frac12}
\pi(f)e_{11}\|_{{\mathcal L}({\mathcal
N}\ten_{ T_t}\N)},\\
\|T_t|f-T_tf|^2\|^{\frac12}=\|\pi(f-e_{11}\pi(f)e_{11})e_{11}\|_{{\mathcal
L}({\mathcal N}\ten_{ T_t}\N)}.
\end{eqnarray*}
This shows that $\|\cdot\|_{bmo^c(\T)}$ and
$\|\cdot\|_{BMO^c(\T)}$ are semi-norms.\end{proof}
\begin{rem} {\rm
An alternative proof for $bmo^c(\T)$ being a semi-norm can be
derived from the identity of Lemma \ref{lemma} (i). Using the GNS
construction for the positive form $T_{t-s}\Gamma$ we can find
linear maps $u_{ts}:{\mathcal N}\to C({\mathcal N})$ such that
 \[ T_t|f|^2-|T_tf|^2 \lel \int_0^t |u_{ts}(f)|^2 ds \pl.\]
This provides an embedding in $L_2^c([0,t])\ten_{\min}C({\mathcal
N})$.}
\end{rem}

 \begin{prop}\label{kernel} Let $(T_t)$ be a standard semigroup
 and $f\in \N\cup L_2(\N)$.
Then the following conditions are equivalent:
\begin{enumerate}
\item[(i)]$\|f\|_{bmo^c(\T)}=0$. \item[(ii)]
 $\|f\|_{BMO^c(\T)}=0$.
\item[(iii)] $f\in ker(A_\infty)\cup ker(A_2)=\{f\in \dom_\infty(A)\cup \dom_2(A), Af=0\}$.
\end{enumerate}
\end{prop}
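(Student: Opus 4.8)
The plan is to prove the chain (iii) $\Rightarrow$ (i) $\Rightarrow$ (ii) $\Rightarrow$ (iii), exploiting the semigroup identities already established. For (iii) $\Rightarrow$ (i): if $Af=0$, then the function $t\mapsto T_tf$ is constant (since $\frac{\partial}{\partial t}T_tf = -AT_tf = -T_tAf = 0$), so $T_tf=f$ for all $t$. Hence $\Gamma(T_tf,T_tf)=\Gamma(f,f)$, but more to the point I would simply invoke Lemma \ref{lemma}(i): the integrand $T_{s-t}\Gamma(T_tf,T_tf)$ is a positive element, and when $Af=0$ we have $T_s|f|^2-|T_sf|^2 = T_s|f|^2 - |f|^2$; applying $\tau$ and using trace-invariance $\tau(T_s|f|^2)=\tau(|f|^2)$ shows $\tau\big(T_s|f|^2-|T_sf|^2\big)=0$. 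Since this element is positive (it equals $2\int_0^s T_{s-t}\Gamma(T_tf,T_tf)\,dt \ge 0$, or directly by Kadison--Schwarz \eqref{KS}) and $\tau$ is faithful, it is zero; thus $\|f\|_{bmo^c(\T)}=0$.

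For (i) $\Rightarrow$ (iii): from $\|f\|_{bmo^c(\T)}=0$ we get $T_s|f|^2=|T_sf|^2$ for every $s>0$. The key trick, standard in the Kadison--Schwarz/multiplicative-domain circle of ideas, is to differentiate (or use the integral form of Lemma \ref{lemma}(i)): $2\int_0^s T_{s-t}\Gamma(T_tf,T_tf)\,dt = 0$, and since the integrand is positive and $\tau$ faithful, $\Gamma(T_tf,T_tf)=0$ for a.e. $t$, hence by continuity for all $t$, in particular $\Gamma(f,f)=0$ at $t=0^+$ after passing to the limit on the domain $\A$. Then for $f$ in the multiplicative domain one shows $T_t$ acts as a $^*$-homomorphism on the algebra generated by $f$; combined with $T_t f \to f$ this forces $T_t(f)=f$ (one can argue $\|T_tf-f\|_2^2 = \tau(|f|^2) - \tau(f^*T_tf)-\tau((T_tf)^*f)+\tau(|T_tf|^2)$ and use $\tau(|T_tf|^2)=\tau(T_t|f|^2)=\tau(|f|^2)$ together with $\tau(f^*T_tf)\to\tau(|f|^2)$, or directly differentiate to get $AT_tf=0$). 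From $T_tf=f$ for all $t$ one reads off $f\in\dom(A)$ and $Af = \lim_{t\to0}t^{-1}(f-T_tf)=0$.

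For (ii) $\Rightarrow$ (iii) (and conversely): I would relate $BMO^c$ to $bmo^c$ directly. Expanding, $T_t|f-T_tf|^2 = T_t|f|^2 - T_t(f^*T_tf) - T_t((T_tf)^*f) + |T_tf|^2 = \big(T_t|f|^2 - |T_tf|^2\big) + \big(|T_tf|^2 - T_t(f^*)T_tf - T_t(f)^*T_tf + \dots\big)$; the cleaner route is to note $T_t|f-T_tf|^2 = (T_t|f|^2-|T_tf|^2) + |T_tf - T_t(f)|^2$-type rearrangement — more precisely $T_t|f-T_tf|^2 - (T_t|f|^2 - |T_tf|^2) = |T_tf|^2 - T_t(f^*)(T_tf) - (T_tf)^*T_t(f) + \dots$ which simplifies so that $\|f\|_{BMO^c(\T)}=0$ iff both $T_t|f|^2=|T_tf|^2$ and $T_t(f-T_tf)=0$; in any case, if $\|f\|_{BMO^c(\T)}=0$ then $T_t|f-T_tf|^2=0$, so by faithfulness $T_{t/2}(f-T_tf)=0$ hence $T_{t/2}f = T_{3t/2}f$, and letting $t\to0$ with strong continuity, $f=T_sf$ for all $s>0$, giving (iii); and (iii)$\Rightarrow$(ii) is immediate since $T_sf=f$ makes $f-T_tf=0$.

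\textbf{Main obstacle.} The delicate point is the implication from vanishing of the column norm to $f$ lying in the \emph{domain} of $A$ with $Af=0$: one must move from the identity $\Gamma(T_tf,T_tf)=0$ (or $T_t|f|^2=|T_tf|^2$) for $t>0$ to a statement at $t=0$, which requires a limiting/continuity argument and care about whether $f\in\N$ or merely $f\in L_2(\N)$. For the $L_2$ case the Hilbert-space argument via $\tau(|T_tf|^2)=\tau(|f|^2)$ forcing $T_tf=f$ (equality in the contraction, hence $f\perp \ker(I-T_t)^\perp$ for all $t$, hence $f$ is in the kernel of the generator $A_2$) is clean; for $f\in\N$ the multiplicative-domain argument needs the Kadison--Schwarz inequality \eqref{KS} and a short argument that the multiplicative domain is a $T_t$-invariant subalgebra on which $T_t$ is a $^*$-endomorphism, after which strong continuity closes the gap. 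I expect the write-up to spend most of its effort there.
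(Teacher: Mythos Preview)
Your treatment of (iii)$\Rightarrow$(i) and of (ii)$\Leftrightarrow$(iii) is essentially the paper's argument. One small correction in (ii)$\Rightarrow$(iii): from $T_t|f-T_tf|^2=0$ and $|f-T_tf|^2\ge 0$, faithfulness of $T_t$ (which follows from $\tau\circ T_t=\tau$ and faithfulness of $\tau$) gives directly $f-T_tf=0$; there is no need to pass through $T_{t/2}(f-T_tf)=0$.

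The real divergence is in (i)$\Rightarrow$(iii). Your primary route---show $\Gamma(T_tf,T_tf)=0$, pass to $t\to 0^+$ to get $\Gamma(f,f)=0$, invoke multiplicative-domain arguments---is considerably more delicate than necessary, and the obstacle you flag (domain issues at $t=0$, the $\N$ versus $L_2(\N)$ split) is a genuine difficulty for that approach. The paper sidesteps all of this with a purely $L_2$ trick. From $T_t|f|^2=|T_tf|^2$ and trace invariance one gets $\tau(|T_tf|^2)=\tau(|f|^2)$ for \emph{every} $t>0$. Now use selfadjointness of $T_t$ to write
\[
\tau(f^*T_{2t}f)=\tau\big(T_t(f^*)\,T_tf\big)=\tau(|T_tf|^2)=\tau(|f|^2),
\]
so that
\[
\|f-T_{2t}f\|_2^2=\tau(|f|^2)-2\tau(|T_tf|^2)+\tau(|T_{2t}f|^2)=0
\]
exactly, for every $t>0$. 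Hence $T_sf=f$ for all $s>0$ and $f\in\ker A$. Your parenthetical almost lands here, but you only use $\tau(f^*T_tf)\to\tau(|f|^2)$ as $t\to 0$, which merely recovers strong continuity; the point is that selfadjointness plus (i) makes this an \emph{identity} for all $t$, not a limit. This removes the need for $\Gamma$, the algebra $\A$, or any multiplicative-domain machinery.
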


\begin{proof}
Note that  (ii) and (iii) both equals to $T_tf=f$ for any $t$ since
$T_t$ is faithful. Hence (ii) is equivalent to (iii). Assume (iii),
then $\tau(T_t|f|^2-|T_tf|^2)=0$ since $T_tf=f$ for all $t$ and
$T_t$ is trace preserving. Note $T_t|f|^2-|T_tf|^2\geq0$ by
(\ref{KS}), so $T_t|f|^2-|T_tf|^2=0$ for any $t>0$. We get (i).
Assume (i), we have $\tau (T_t|f|^2-|T_{t}f|^2) =0$, so $\tau
(|f|^2-|T_tf|^2)=0$ for any $t>0$. So $\tau(|f-T_{t}f|^2)=\tau
(|f|^2-2|T_{t}f|^2+|T_{2t}f|^2)=0$ for any $t>0$. So $f=T_tf$ for
any $t>0$. This implies (iii).
\end{proof}

\begin{prop}\label{gp} Let $(T_t)$ be a standard semigroup and  $f\in {\mathcal
N}\cup L_2(\N)$. Then
\begin{enumerate}
 \item[(i)] $\|T_sf\|_{bmo^c(\T)}\kl \|f\|_{bmo^c(\T)}$ for all $s>0$;
  \item[(ii)] $\|f\|_{BMO^c(\T)}\kl 2\pl \|f\|_{bmo^c(\T)}+\sup_t
 \|T_tf-T_{2t}f\|$.
 \item[(iii)] If in addition  $\Gamma^2\geq0$, then
 $$\|f\|_{BMO^c(\T)}\simeq \|f\|_{bmo^c(\T)}+\sup_t
 \|T_tf-T_{2t}f\|.$$ \end{enumerate}
\end{prop}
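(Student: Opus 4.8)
The plan is to treat the three parts separately, with part (ii) as the analytic heart and (iii) following by combining (i) and (ii) with a reverse estimate.

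For part (i), the natural approach is to exploit the semigroup property together with the Kadison--Schwarz inequality \eqref{KS}. Writing $g = T_sf$, I want to bound $T_t|g|^2 - |T_tg|^2 = T_t|T_sf|^2 - |T_{t+s}f|^2$. Since $T_s$ is completely positive and unital, \eqref{KS} gives $|T_sf|^2 \le T_s|f|^2$, hence $T_t|T_sf|^2 \le T_{t+s}|f|^2$, so that
\[
T_t|T_sf|^2 - |T_{t+s}f|^2 \kl T_{t+s}|f|^2 - |T_{t+s}f|^2,
\]
and the right-hand side has norm at most $\|f\|_{bmo^c(\T)}^2$ by definition (applied at time $t+s$). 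Taking the supremum over $t$ yields (i). I would double-check that the passage to operator norms respects the order relation, which it does for positive elements.

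For part (ii), I would start from the algebraic identity
\[
T_t|f - T_tf|^2 = T_t|f|^2 - T_t(f^*\,T_tf) - T_t((T_tf)^*f) + T_t|T_tf|^2 ,
\]
and then isolate $T_t|f|^2 - |T_tf|^2$ (which is controlled by $\|f\|_{bmo^c(\T)}^2$) from the remaining terms. The cross terms can be reorganized as $|T_tf|^2 + T_t|T_tf|^2 - T_t(f^*T_tf) - T_t((T_tf)^*f)$; completing the square against $T_{2t}f = T_t(T_tf)$ and $|T_tf|^2$, one expects to produce a term of the form $|T_tf - T_{2t}f|^2$ (or its $T_t$-image) plus another copy of a $bmo^c$-type difference, which is where the factor $2$ and the extra term $\sup_t\|T_tf - T_{2t}f\|$ enter. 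The main obstacle here is purely bookkeeping: keeping the noncommutative ordering straight and making sure every discarded term is positive so that the triangle-type inequality in operator norm goes through; I would be careful to use \eqref{KS} only in the form $|T_t h|^2 \le T_t|h|^2$ applied to $h = f$ and $h = f - T_tf$.

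For part (iii), the estimate ``$\gtrsim$'' combines (i) and (ii): from (ii), $\|f\|_{BMO^c(\T)} \le 2\|f\|_{bmo^c(\T)} + \sup_t\|T_tf - T_{2t}f\|$, so it remains to bound $\|f\|_{bmo^c(\T)}$ and $\sup_t\|T_tf-T_{2t}f\|$ by a multiple of $\|f\|_{BMO^c(\T)}$. For the second, note $\|T_tf - T_{2t}f\| = \|T_t(f - T_tf)\|$ and $T_t(f-T_tf) = (e_{11}\pi(f-T_tf))e_{11}$ in the GNS picture, which is dominated by $\|T_t|f-T_tf|^2\|^{1/2} \le \|f\|_{BMO^c(\T)}$. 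For $\|f\|_{bmo^c(\T)}$, I would use the $\Gamma^2 \ge 0$ hypothesis: by Lemma \ref{lemma}(i), $T_t|f|^2 - |T_tf|^2 = 2\int_0^t T_{t-v}\Gamma(T_vf,T_vf)\,dv$, and $\Gamma^2 \ge 0$ gives $\Gamma(T_vf,T_vf) \le T_v\Gamma(f,f)$, allowing one to compare this integral to $T_{2t}|f|^2 - |T_{2t}f|^2$-type quantities or directly to $\int_0^{2t} T_{2t-v}\Gamma(T_vf,T_vf)\,dv = T_{2t}|f|^2 - |T_{2t}f|^2$ up to constants; rewriting $BMO^c$ via $T_t|f-T_tf|^2 \ge$ a comparable integral of $\Gamma$-terms then closes the loop. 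The hypothesis $\Gamma^2 \ge 0$ is essential precisely because without it the $bmo^c$-norm need not be controlled by $BMO^c$ plus the oscillation term, and I expect the monotonicity-in-time argument built on Lemma \ref{lemma}(i) to be the delicate point.
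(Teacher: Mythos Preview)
Your plan for (i) is correct and matches the paper.

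For (ii), your algebraic expansion does not work as written. After isolating $T_t|f|^2-|T_tf|^2$, the remainder is
\[
R = |T_tf|^2 + T_t|T_tf|^2 - T_t(f^*T_tf) - T_t((T_tf)^*f),
\]
and you propose that $R$ decomposes as $|T_tf-T_{2t}f|^2$ plus another $bmo^c$-type difference such as $T_t|T_tf|^2-|T_{2t}f|^2$. That would require $T_t(f^*T_tf)=(T_tf)^*T_{2t}f$, i.e.\ that $T_t$ be multiplicative on the pair $(f,T_tf)$, which is false in general; already for a two-point averaging semigroup and $f=(1,-1)$ one checks $R$ is negative while your proposed pieces are both positive. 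The paper avoids this by not expanding at all: it writes
\[
T_t|f-T_tf|^2 \;=\; \big(T_t|f-T_tf|^2-|T_t(f-T_tf)|^2\big)\;+\;|T_tf-T_{2t}f|^2,
\]
bounds the first bracket by $\|f-T_tf\|_{bmo^c(\T)}^2$, and then uses the \emph{seminorm} property of $\|\cdot\|_{bmo^c(\T)}$ together with (i) to get $\|f-T_tf\|_{bmo^c(\T)}\le \|f\|_{bmo^c(\T)}+\|T_tf\|_{bmo^c(\T)}\le 2\|f\|_{bmo^c(\T)}$. The ``add and subtract $|T_t(f-T_tf)|^2$'' step is the whole point; your raw expansion in $f$ and $T_tf$ cannot see it.

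For (iii), the direction you call ``$\gtrsim$'' is fine, and your bound $\sup_t\|T_tf-T_{2t}f\|\le\|f\|_{BMO^c(\T)}$ via Kadison--Schwarz is correct (the paper phrases this as Choi's inequality). The gap is in bounding $\|f\|_{bmo^c(\T)}$ by $\|f\|_{BMO^c(\T)}$. Applying $\Gamma^2\ge 0$ to $\int_0^t T_{t-v}\Gamma(T_vf,T_vf)\,dv$ as you describe gives upper bounds by $tT_t\Gamma(f,f)$ or by $T_{2t}|f|^2-|T_{2t}f|^2$, which is the \emph{same} quantity at a different scale---this is circular. The paper's device, which you are missing, is to split $T_sf=T_s(f-T_tf)+T_{s+t}f$ \emph{inside} the $\Gamma$-integral and use the operator-valued triangle inequality there:
\[
\big(T_t|f|^2-|T_tf|^2\big)^{1/2}
\le \Big(\!\int_0^t T_{t-s}\Gamma(T_s(f-T_tf),T_s(f-T_tf))\,ds\Big)^{1/2}
+\Big(\!\int_0^t T_{t-s}\Gamma(T_{s+t}f,T_{s+t}f)\,ds\Big)^{1/2}.
\]
The first piece is $\le (T_t|f-T_tf|^2)^{1/2}$ by Lemma~\ref{lemma}(i). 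For the second piece one now uses $\Gamma^2\ge 0$ and the substitution $v=2s$ to get $\le \frac{1}{\sqrt{2}}(T_{2t}|f|^2-|T_{2t}f|^2)^{1/2}$. Taking sup over $t$ yields the \emph{self-improving} inequality
\[
\|f\|_{bmo^c(\T)}\le \|f\|_{BMO^c(\T)}+\tfrac{1}{\sqrt{2}}\|f\|_{bmo^c(\T)},
\]
hence $\|f\|_{bmo^c(\T)}\le(2+\sqrt{2})\|f\|_{BMO^c(\T)}$. The splitting of $T_sf$ and the resulting bootstrap with constant $<1$ are the essential ideas; your plan does not contain them.
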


\begin{proof} Let us start with i) and the pointwise estimate
  \[ 0\kl T_{t}|T_sf|^2-|T_{t+s}f|^2\kl T_{t+s}|f|^2-|T_{t+s}f|^2
  \pl .\]
By definition of the $bmo^c(\T)$ seminorm  this implies
 \begin{align*}
  \|T_sf\|_{bmo^c(\T)}
 &=  \sup_t \| T_{t}|T_sf|^2-|T_{t+s}f|^2\|^{\frac12} \kl
  \sup_t \|T_{t+s}|f|^2-|T_{t+s}f|^2\|^{\frac12} \kl
 \|f\|_{bmo^c(\T)} \pl .
 \end{align*}
For the proof of (ii), we fix  $t>0$ and use the triangle inequality
(see Lemma \ref{bmo1}):
\begin{align*}
 \|T_{t}|f-T_tf|^2\|^{\frac12}
 &\kl \|T_{t}|f-T_tf|^2-|T_t(f-T_tf)|^2\|^{\frac12} +
  \|\p  |T_t(f-T_tf)|^2\|^{\frac12} \\
  &\le \|f-T_tf\|_{bmo^c(\T)}+\||T_t(f-T_tf)|^2\|^{\frac12}\\
  &\le  \|f\|_{bmo^c(\T)}+\|T_tf\|_{bmo^c(\T)}+\|T_t(f-T_tf)\|
 \end{align*}
We apply (i) and obtain
\begin{align*}
  \|T_{t}|f-T_tf|^2\|^{\frac12}
 &\le  2\|f\|_{bmo^c(\T)}+\|T_t(f-T_tf)\|.
 \end{align*}
Taking supremum over $t$ yields the assertion. To prove (iii), we
apply Lemma \ref{lemma} (i) and the triangle inequality,
\begin{eqnarray*}
(T_t|f|^2-|T_tf|^2)^{\frac12}&=&(\int_0^tT_{t-s}\Gamma(T_sf,T_sf)ds)^{\frac12}\\
&\leq&(\int_0^tT_{t-s}\Gamma(T_s(f-T_tf),T_s(f-T_tf))ds)^{\frac12}+(\int_0^tT_{t-s}\Gamma(T_{s+t}f,T_{s+t}f)ds)^{\frac12}\\
({\rm Lemma\ \ref{lemma}\ (i)})&=&(T_t|f-T_tf|^2-|T_tf-T_{2t}f|^2)^{\frac12}+(\int_0^tT_{t-s}\Gamma(T_{s+t}f,T_{s+t}f)ds)^{\frac12}\\
(\Gamma^2\geq0)&\leq&(T_t|f-T_tf|^2)^{\frac12}+(\int_0^tT_{2t-2s}\Gamma(T_{2s}f,T_{2s}f)ds)^{\frac12}\\
(v=2s)&\leq&(T_t|f-T_tf|^2)^{\frac12}+(\frac12\int_0^{2t}T_{2t-v}\Gamma(T_{v}f,T_{v}f)dv)^{\frac12}\\
({\rm Lemma\ \ref{lemma}\
(i)})&=&(T_t|f-T_tf|^2)^{\frac12}+\frac1{\sqrt2}(T_{2t}|f|^2-|T_{2t}f|^2)^{\frac12}.
\end{eqnarray*}
Taking the norm and the supremum over $t$ on both sides, we get
$$\|f\|_{bmo^c(\T)}\leq (\sqrt2+2)\|f\|_{BMO^c(\T)}.$$
By Choi's inequality (see \cite{Choi}) we find
 \[ |T_tf-T_{2t}f|^2 \kl T_t|f-T_tf|^2 \pl . \]
Together with (ii), we obtain  (iii). \qd

We now consider BMO-norms associated with the subordinated semigroup
$(P_t)_t$.
\begin{prop}\label{bmo3} Let $(T_t)$ be a standard semigroup and $(P_t)$ be the associated Poisson semigroup. Let
$f\in \N\cup L_2(\N)$. Then
\begin{enumerate}
 \item[(i)] $P_b|f|^2-|P_bf|^2=   2 \int_{0}^{\infty} \int_{\max\{0,v-b\}}^{v}
  P_{b-v+2t}\hat{\Gamma}(P_{v}f,P_{v}f) dt dv$;
 \item[(ii)] $\sup_b \|\int_0^{\infty} P_{b+s}|P'_sf|^2 \min(s,b)
 ds\| \kl 4 \pl \|f\|_{bmo^c(\P)}^2$;
 \item[(iii)] If in addition  $\Gamma^2\gl 0$, then
 \[ \frac14 \int_0^{\infty}  P_{b+s}\hat{\Gamma}(P_{s}f,P_sf)   \min(s,b)
 ds \kl P_b|f|^2-|P_bf|^2\kl 180 \int_0^{\infty} P_{\frac{b}{3}+s}
 \hat{\Gamma}(P_{s}f,P_{s}f) \pl \min(\frac b3,s) ds\pl.\]
\end{enumerate}
Here $\hat{\Gamma}(f_s,f_s)=\Gamma(f_s,f_s)+|f_s'|^2$.
\end{prop}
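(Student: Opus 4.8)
The plan is to build everything on Lemma \ref{lemma}, specializing part (ii) (and the two integral identities it provides) to the Poisson semigroup and bookkeeping the time parameters carefully. For part (i), I would start from Lemma \ref{lemma}(i) applied to $(P_t)$: since $(P_t)$ is itself a standard semigroup, $P_b|f|^2-|P_bf|^2 = 2\int_0^b P_{b-s}\Gamma_{A^{1/2}}(P_sf,P_sf)\,ds$. Then I substitute the integral formula for $\Gamma_{A^{1/2}}(P_sf,P_sf)$ from the last display of Lemma \ref{lemma}(ii), namely $P_{b-s}\Gamma_{A^{1/2}}(P_sf,P_sf) = \int_0^\infty P_{(b-s)+w}\hat\Gamma(P_{s+w}f,P_{s+w}f)\,dw$ after writing $\Gamma+|\cdot'|^2=\hat\Gamma$. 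This gives a double integral $2\int_0^b\int_0^\infty P_{b-s+w}\hat\Gamma(P_{s+w}f,P_{s+w}f)\,dw\,ds$; the stated form follows by the change of variables $v=s+w$ (so $w=v-s$, and for fixed $s\in[0,b]$ the variable $v$ ranges over $[s,\infty)$), then reordering to integrate $v$ outside and $s=:t$ over $[\max\{0,v-b\}, \min\{v,b\}]$ — and on that range $\min\{v,b\}=v$ when $v\le b$ while for $v>b$ one still gets the upper limit $v$ only if $v-b\le b$; I will need to double-check whether the stated upper limit should be $\min\{v,b\}$ rather than $v$, but modulo that the exponent $b-s+w = b-t+(v-t)=b-v+2t$ matches. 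So part (i) is essentially a clean two-fold application of Lemma \ref{lemma} plus Fubini.

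For part (ii), the idea is to extract the ``$|P_s'f|^2$'' piece. From part (i) (or directly from Lemma \ref{lemma}(ii) plugged into Lemma \ref{lemma}(i) for $P$) we get a lower bound $P_b|f|^2-|P_bf|^2 \ge 2\int_0^\infty\int_{\max\{0,v-b\}}^{v} P_{b-v+2t}|P_v'f|^2\,dt\,dv$ since $\Gamma\ge0$. Now I would restrict the inner $t$-integral to a convenient sub-range to lose the $t$-dependence of the kernel in a controlled way: for $t$ in (say) $[\,(v - \min(v,b))\vee 0,\ \dots\,]$ one has $b-v+2t$ comparable to $b$, and using Proposition \ref{monot} ($P_{b-v+2t}\le \frac{b-v+2t}{b+v}P_{b+v}$ or similar monotonicity) I can replace the kernel $P_{b-v+2t}$ by a multiple of $P_{b+v}$ and the $t$-integral just produces a factor comparable to $\min(v,b)$. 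That yields $\|P_b|f|^2-|P_bf|^2\| \gtrsim \|\int_0^\infty P_{b+v}|P_v'f|^2\min(v,b)\,dv\|$ up to the claimed constant $1/4$; since the left side is $\le \|f\|_{bmo^c(\P)}^2$ by definition, (ii) follows. The constant $4$ will come from being slightly wasteful in the sub-range choice and in Proposition \ref{monot}.

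For part (iii), under $\Gamma^2\ge0$ the term $P_{s+w}$ inside $\hat\Gamma$ in part (i) can be pulled back to $P_s$ at the cost of an extra $P_w$, i.e. $\hat\Gamma(P_{s+w}f,P_{s+w}f)\le P_{2w}\hat\Gamma(P_sf,P_sf)$ (using $\Gamma(P_wg,P_wg)\le P_{2w}\Gamma(g,g)$, which the excerpt notes follows from $\Gamma^2\ge0$, together with the corresponding contraction estimate for $P_w'$); and conversely one gets a lower bound by the same mechanism run the other way. The lower bound in (iii) then drops out of part (i) exactly as the lower half of (ii) did: keep only a sub-range of the $t$-integral where $b-v+2t\asymp b+s$, apply Proposition \ref{monot} to compare $P_{b-v+2t}$ with $P_{b+s}$, and collect a factor $\min(s,b)$. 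For the upper bound in (iii) I would go the other direction: bound $P_{b-v+2t}\hat\Gamma(P_vf,P_vf)$ from above by (a constant times) $P_{b/3+s}\hat\Gamma(P_sf,P_sf)\min(b/3,s)$ after splitting the $v$-integral into $v\le b$ and $v>b$, using $\hat\Gamma(P_vf,\cdot)\le P_{2(v-s)}\hat\Gamma(P_sf,\cdot)$ for $v\ge s$ to move back to $P_s$, Proposition \ref{monot} to absorb the $P$-index discrepancies into the $b/3$ shift, and the elementary integral estimates $\int_0^v(\cdots)\,dt \le C\min(b,v)$. The main obstacle, and where the large constant $180$ is spent, is precisely this last bookkeeping: matching the three-parameter kernel $P_{b-v+2t}$ against a single clean kernel $P_{b/3+s}$ uniformly over the region of $(v,t)$ requires carefully chosen sub-intervals and repeated, slightly lossy applications of Proposition \ref{monot}, so I expect the bulk of the work (and all the arithmetic) to be in establishing the upper bound of (iii).
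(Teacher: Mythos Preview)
Your proposal is essentially correct and follows the same route as the paper: Lemma~\ref{lemma}(i) applied to $(P_t)$, then Lemma~\ref{lemma}(ii) to expand $\Gamma_{A^{1/2}}$, then Fubini for (i); for (ii) and the lower bound of (iii) one drops the $\Gamma$-part (resp.\ keeps $\hat\Gamma$) and uses Proposition~\ref{monot} to collapse the inner $t$-integral into a factor comparable to $\min(v,b)$; for the upper bound of (iii) one uses $\hat\Gamma^2\ge 0$ to shift $P_v$ to $P_{v/3}$, splits $v\le b$ versus $v>b$, and applies Proposition~\ref{monot} to land on $P_{b/3+s}$.

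One bookkeeping point worth straightening out in (i): your suspicion about the upper limit is an artifact of which variable you keep inside. You chose to integrate over the original $s$ (from Lemma~\ref{lemma}(i)) after setting $v=s+w$; that gives inner limits $[0,\min\{v,b\}]$ and exponent $b+v-2t$ (your computation $b-t+(v-t)=b-v+2t$ is a slip --- it equals $b+v-2t$). The paper instead keeps the variable $t$ from Lemma~\ref{lemma}(ii) inside, yielding limits $[\max\{0,v-b\},v]$ and exponent $b-v+2t$. The two forms coincide under $t\leftrightarrow v-t$, so the stated upper limit $v$ is correct in the paper's parametrization. Also, in your sketch of (iii) the correct contraction is $\hat\Gamma(P_{s+w}f,P_{s+w}f)\le P_{w}\hat\Gamma(P_sf,P_sf)$ (one $P_w$, not $P_{2w}$); the paper exploits this with $w=\tfrac{2v}{3}$, $s=\tfrac{v}{3}$, which is where the $b/3$ shift originates.
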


\begin{proof} For the proof of (i) we apply Lemma \ref{lemma} (i) to $P_t$ and get
 \[ P_b|f|^2-|P_bf|^2 \lel 2 \int_{0}^b  P_{b-s}\Gamma_{A^{\frac12}}(P_sf,P_sf) ds \pl .\]
Using the formula for $\Gamma_{A^{\frac12}}(P_sf,P_sf) $ from Lemma
\ref{lemma} (ii), we obtain with the  change of variable ($v=s+t$)
that
 \begin{align}
  P_b|f|^2-|P_bf|^2 &\lel  2 \int_{0}^b \int_0^{\infty}
  P_{b-s+t}\hat{\Gamma}(P_{s+t}f,P_{s+t}f) dtds  \nonumber \\
  &\lel
  2 \int_{0}^{\infty} \int_{t}^{b+t}
  P_{b-v+2t}\hat{\Gamma}(P_{v}f,P_{v}f) dv dt\nonumber\\
&\lel
  2 \int_{0}^{\infty} \int_{\max\{0,v-b\}}^{v}
  P_{b-v+2t}\hat{\Gamma}(P_{v}f,P_{v}f) dt dv.
  \label{start}
\end{align}
This is (i). Note $\Gamma^2\geq0$ implies $\hat{\Gamma}^2\gl0$. We
apply monotonicity, Proposition \ref{monot} and split the
integral, and get
 \begin{align*}
  & 2 \int_{0}^{\infty} \int_{\max\{0,v-b\}}^{v}
  P_{b-v+2u}\hat{\Gamma}(P_{v}f,P_{v}f) du dv\\
  &\gl 2 \int_{0}^{\infty} \int_{\max\{0,v-b\}}^{v}
  P_{b+2u}\hat{\Gamma}(P_{2v}f,P_{2v}f) du dv\\
  &\gl 2 \int_{0}^{\infty} \kla \int_{\max\{0,v-b\}}^v \frac{b+2u}{b+2v}
   du\mer  P_{b+2v}\hat{\Gamma}(P_{2v}f,P_{2v}f)  dv\\
 &\lel   \int_{0}^{\infty}  \frac{2bv+4v^2-2b\max\{0,v-b\}-4\max\{0,v-b\}^2}{2(b+2v)}
   P_{b+2v}\hat{\Gamma}(P_{2v}f,P_{2v}f)  dv\\
  &\gl   \int_{0}^{b}
   P_{b+2v}\hat{\Gamma}(P_{2v}f,P_{2v}f)  vdv
  +  \int_{b}^{\infty} \frac{4bv}{2(b+2v)}
    P_{b+2v}\hat{\Gamma}(P_{2v}f,P_{2v}f)  dv
    \\
  &\gl \frac12  \int_{0}^{b} P_{b+2v} \hat{\Gamma}(P_{2v}f,P_{2v}f)  2vdv
  +\frac{1}{2} b  \int_{b}^{\infty} P_{b+2v}\hat{\Gamma}(P_{2v}f,P_{2v}f)
  dv \\
  &\gl \frac12 \int_0^{\infty} P_{b+2v}\hat{\Gamma}(P_{2v}f,P_{2v}f)  \min(2v,b)
  dv\pl .
 \end{align*}
Without $\Gamma^2\gl 0$ we  only obtain
 \[ P_b|f|^2-|P_bf|^2 \gl
 \frac12 \int_0^{\infty}  P_{b+2v} |P'_{2v}f|^2   \min(2v,b) dv
 \lel \frac14 \int_0^{\infty}  P_{b+v} |P'_{v}f|^2   \min(v,b) dv
 \pl .\]
This is (ii) and the first inequality of (iii). To complete the
proof of (iii) we start with \eqref{start} and  $\Gamma^2\geq0$:
 \begin{align*}
  &P_b|f|^2-|P_bf|^2 \lel 2 \int_{0}^{\infty} \int_{\max\{0,v-b\}}^{v}
  P_{b-v+2u}\hat{\Gamma}(P_{v}f,P_{v}f) du dv \\
 &\kl 2 \int_{0}^{\infty} \int_{\max\{0,v-b\}}^{v}
  P_{b-\frac{v}{3}+2u}\hat{\Gamma}(P_{\frac{v}{3}}f,P_{\frac{v}{3}}f) du dv \\
 &\lel  2 \int_0^b \int_{0}^{v}  P_{b-v+2u}\hat{\Gamma}(P_{v}f,P_{v}f) du dv
 + 2 \int_b^{\infty} \int_{v-b}^{v}  P_{b-v+2u}\hat{\Gamma}(P_{v}f,P_{v}f) du dv
 \lel I+II \pl .
 \end{align*}
For $v\gl b$ we have
 \[  \frac{b+v}{3} \kl  b-\frac{v}{3}+2u \kl \frac{5}{3}(b+v) \pl. \]
Thus monotonicity  implies
 \begin{align*}
  II &\kl 2 \int_{0}^{\infty} \int_{v-b}^{v}
  P_{b-\frac{v}{3}+2u}\hat{\Gamma}(P_{\frac{v}{3}}f,P_{\frac{v}{3}} f) du dv
 \kl   10b  \int_{b}^{\infty}
 P_{\frac{b+v}{3}}\hat{\Gamma}(P_{\frac{v}{3}}f,P_{\frac{v}{3}}f) dv \\
 &= 90   \int_{\frac b3}^{\infty}
 P_{\frac{b}{3}+s}\hat{\Gamma}(P_{s}f,P_sf)
 \min(s,\frac{b}{3}) \pl
  ds\pl .
 \end{align*}
In the range $v\le b$ and $0\le u\le v$  we also have
 \[ \frac{b+v}{3}\kl b+2u-\frac{v}{3} \kl \frac{5}{3}(b+v) \pl .\]
Again by monotonicity  and with $s=\frac{v}{3}$ we obtain
 \begin{align*}
 I &\le 10  \int_0^{b}  P_{\frac{b+v}{3}}\hat{\Gamma}(P_{\frac{v}{3}}f,P_{\frac{v}{3}}
 f) \pl v dv
 \lel 90 \int_0^{\frac b3}  P_{\frac{b}{3}+s}\hat{\Gamma}(P_{s}f,P_{s}
 f) \pl s  ds \pl.
 \end{align*}
This yields
 \begin{align*}
  P_b|f|^2-|P_bf|^2 &\le  180 \int_0^{\infty} P_{\frac{b}{3}+s}
 \hat{\Gamma}(P_{s}f,P_{s}f) \pl \min(\frac{b}{3},s) ds \pl .\qedhere
 \end{align*}\qd

In view of the classical Carleson-measure-characterization of BMO,
we define, for $f\in \N\cup L_2(\N)$,
\begin{eqnarray}
\|f\|_{BMO^c(\partial)}&=&\|\sup_t P_t\int_0^t
|P_s'f|^2sds\|^{\frac12},\\
\|f\|_{BMO^c(\Gamma)}&=&\sup_t
   \|P_t\int_0^{t}\!\!
  \Gamma(P_{s}f,P_{s}f) s ds\|^{\frac 12 },\\
  \|f\|_{BMO^c(\hat{\Gamma})} &=&
 \sup_t \|P_t\int_0^{t} \hat{\Gamma}(P_sf,P_sf)
 sds\|^{\frac12}  \pl .
 \end{eqnarray}

\begin{theorem}\label{lem0} Let $(T_t)$ be a standard semigroup. Then
\begin{eqnarray}
\|f\|_{BMO^c(\partial)}\leq c\|f\|_{BMO^c(\P)}\leq
c\|f\|_{BMO^c(\T)}
\end{eqnarray}
\end{theorem}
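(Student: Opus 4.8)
The plan is to establish the two inequalities separately, working from right to left. For the inequality $\|f\|_{BMO^c(\P)}\leq c\|f\|_{BMO^c(\T)}$, the natural route is to compare the defining quantities via the integral representation \eqref{ps-form}. One writes $P_t|f-P_tf|^2$ and relates it to averages of $T_u|f-T_uf|^2$ by splitting the Poisson integral and using the subordination formula together with Proposition \ref{monot}. Concretely, since $P_tf-P_t(P_tf)$ can be reorganized so that the ``outer'' Poisson operator plays the role of $T_t$ in $BMO^c(\T)$ applied to a suitable average of the $T_u$'s, one should be able to dominate $\|f\|_{BMO^c(\P)}$ by a constant times $\sup_u\|T_u|f-T_uf|^2\|^{1/2}$, using that $T_u$ is a completely positive unital contraction and the Kadison--Schwarz inequality \eqref{KS} throughout.

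The main work is the first inequality, $\|f\|_{BMO^c(\partial)}\leq c\|f\|_{BMO^c(\P)}$, which is the Carleson-measure estimate. Here the key identity is Proposition \ref{bmo3}(ii), which states
\[
\sup_b\Big\|\int_0^\infty P_{b+s}|P_s'f|^2\min(s,b)\,ds\Big\|\leq 4\|f\|_{bmo^c(\P)}^2.
\]
Since $\|f\|_{bmo^c(\P)}\leq\|f\|_{BMO^c(\P)}$ (this follows from Proposition \ref{gp}-type comparisons, or directly: $T_t|f|^2-|T_tf|^2 = T_t|f-T_tf|^2 - |T_tf-T_{2t}f|^2\leq T_t|f-T_tf|^2$ applied to $(P_t)$, using Choi's inequality as in the proof of Proposition \ref{gp}(iii)), it suffices to bound $\|f\|_{BMO^c(\partial)}^2=\sup_t\|P_t\int_0^t|P_s'f|^2 s\,ds\|$ by the left-hand side above. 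The point is that on the region $0<s<t$ one has $\min(s,t)=s$, so $P_t\int_0^t|P_s'f|^2 s\,ds$ is exactly the ``truncated'' version of $\int_0^\infty P_{t+s}|P_s'f|^2\min(s,t)\,ds$ once one replaces $P_t$ acting on $\int_0^t(\cdots)$ by $\int_0^t P_{t+s}(\cdots)$; but $P_t P_s = P_{t+s}$ exactly, so in fact $P_t\int_0^t|P_s'f|^2 s\,ds = \int_0^t P_{t+s}|P_s'f|^2 s\,ds\leq\int_0^\infty P_{t+s}|P_s'f|^2\min(s,t)\,ds$. Taking $\sup_t$ and invoking Proposition \ref{bmo3}(ii) gives $\|f\|_{BMO^c(\partial)}^2\leq 4\|f\|_{bmo^c(\P)}^2\leq 4\|f\|_{BMO^c(\P)}^2$, so $c=2$ works for that step.

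So the proof reduces to two clean points: (1) the semigroup identity $P_t\int_0^t(\cdots)\,ds = \int_0^t P_{t+s}(\cdots)\,ds$ together with $\min(s,t)=s$ for $s\le t$, which lets Proposition \ref{bmo3}(ii) do the heavy lifting for the first inequality; and (2) the comparison $\|f\|_{bmo^c(\P)}\le\|f\|_{BMO^c(\P)}$, obtained by the Lemma \ref{lemma}(i)/Choi argument. For the second inequality $\|f\|_{BMO^c(\P)}\le c\|f\|_{BMO^c(\T)}$, one uses the subordination formula \eqref{ps-form} to write $P_t$ as a probability-weighted average of $T_u$ and monotonicity from Proposition \ref{monot}; the technical care is in handling the ``$f-P_tf$'' versus ``$f-T_uf$'' discrepancy, which one manages by a further application of \eqref{KS} and the triangle inequality for the $BMO^c$ seminorm (Proposition giving semi-norm property). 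I expect the main obstacle to be bookkeeping the constants and justifying the interchange of the operator $P_t$ with the $ds$-integral and with the order structure (i.e. that $P_t$ preserves the inequality $\int_0^t\le\int_0^\infty$ between positive operator-valued integrands), which is routine given that each $P_t$ is positive and normal, but must be stated carefully.
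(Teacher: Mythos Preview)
Your approach to the first inequality contains two genuine errors.

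First, the identity $P_t\int_0^t|P_s'f|^2\,s\,ds=\int_0^t P_{t+s}|P_s'f|^2\,s\,ds$ is false. The semigroup law $P_tP_s=P_{t+s}$ says $P_t(P_s g)=P_{t+s}g$; it does \emph{not} say $P_t(h)=P_{t+s}(h)$ for an arbitrary element $h=|P_s'f|^2$. What you get is $P_t\int_0^t|P_s'f|^2\,s\,ds=\int_0^t P_t(|P_s'f|^2)\,s\,ds$, and there is no general inequality $P_t(g)\le C\,P_{t+s}(g)$ for positive $g$ (Proposition~\ref{monot} gives the reverse direction). The equivalence $\|f\|_{BMO^c(\partial)}\simeq\sup_t\|\int_0^\infty P_{t+s}|P_s'f|^2\min(s,t)\,ds\|^{1/2}$ does hold (Lemma~\ref{lem1}), but its proof requires the $\Gamma^2\ge 0$ / Kadison--Schwarz mechanism to push a $P_{v/2}$ through, not the naive semigroup law.

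Second, your claimed identity $T_t|f|^2-|T_tf|^2=T_t|f-T_tf|^2-|T_tf-T_{2t}f|^2$ is also false. It holds when $T_t$ is a conditional expectation (so the cross terms $T_t((f-T_tf)^*T_tf)$ vanish), but not for a general completely positive unital map. A two-point Markov chain with $f$ the $\pm1$ eigenvector already breaks it. More importantly, the inequality you are really after, $\|f\|_{bmo^c(\P)}\le C\|f\|_{BMO^c(\P)}$, is established in the paper only under $\Gamma^2\ge 0$ (Proposition~\ref{gp}(iii) and Theorem~\ref{last}); Theorem~\ref{lem0} makes no such assumption, so you cannot route the argument through $bmo^c(\P)$ and Proposition~\ref{bmo3}(ii).

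The paper's proof of the first inequality avoids $bmo^c(\P)$ altogether. It imports the estimate $\sup_t\|P_t\int_0^t|\partial_sP_s(f-P_sf)|^2\,s\,ds\|^{1/2}\le\|f\|_{BMO^c(\P)}$ from \cite{Mei}, writes $P_s'f=P_s'(f-P_sf)+P_s'P_sf$, and then shows that the second piece contributes at most $\frac{\sqrt3}{2}\|f\|_{BMO^c(\partial)}$ by a change of variables, allowing it to be absorbed into the left-hand side. Your sketch for the second inequality $\|f\|_{BMO^c(\P)}\le c\|f\|_{BMO^c(\T)}$ is in the right spirit; the paper executes it by writing $P_s|f-P_sf|^2\le\int\!\!\int\phi_s(u)\phi_s(v)\,T_u|f-T_vf|^2\,dv\,du$ and controlling $\|T_u|f-T_vf|^2\|^{1/2}$ by a logarithmic telescoping sum against $\|f\|_{BMO^c(\T)}$ when $v>u$.
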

\begin{proof}
To prove the first inequality, recall that Theorem 3.2 of \cite{Mei}
states that \begin{eqnarray*} \sup_t\|P_t\int_0^t|\frac{\partial
P_s}{\partial s}(f-P_sf)|^2sds\|^{\frac12}\leq \|f\|_{BMO^c(\P)}.
\end{eqnarray*}
Then
\begin{eqnarray*} &&\|P_t\int_0^t|\frac{\partial
P_s}{\partial s}f|^2sds\|^{\frac12}\\
&\leq& \|P_t\int_0^t|\frac{\partial P_s}{\partial
s}(f-P_sf)|^2sds\|^{\frac12}+\|P_t\int_0^t|\frac{\partial
P_s}{\partial
s}P_sf|^2sds\|^{\frac12}\\
&\leq& \|f\|_{BMO^c(\P)}+\|P_t\int_0^{\frac t2}|\frac{\partial
P_s}{\partial s}P_sf|^2sds+P_t\int_{\frac t2}^t|\frac{\partial
P_s}{\partial s}P_sf|^2sds\|^{\frac12}\\
(v=2s)&\leq& \|f\|_{BMO^c(\P)}+\|\frac14P_t\int_0^{\frac
t2}|\frac{\partial P_v}{\partial v}f|^2vdv+P_t\int_{\frac
t2}^tP_{\frac t2}|\frac{\partial
P_s}{\partial s}P_{s-\frac t2}f|^2sds\|^{\frac12}\\
(u=2s-\frac t2) &\leq& \|f\|_{BMO^c(\P)}+\|\frac14P_t\int_0^{\frac
t2}|P_v'f|^2vdv+\frac12P_{\frac {3t}2}\int_{\frac
t2}^{\frac {3t}2}|P_u'f|^2udu\|^{\frac12}\\
&\leq& \|f\|_{BMO^c(\P)}+\frac{\sqrt3}2 \sup_t\|P_t\int_0^{
t}|P_s'f|^2sds\|^{\frac12}.
\end{eqnarray*}
Taking supremum on both sides, we have, \begin{eqnarray*}
\|f\|_{BMO(\partial)}=\sup_t\|P_t\int_0^{t}|\frac{\partial
{P_s}}{\partial s}f|^2sds\|^{\frac12}\leq
\frac1{1-\frac{\sqrt3}2}\|f\|_{BMO^c(\P)}.
\end{eqnarray*}
To prove the second inequality, we apply (\ref{ps-form}) and
(\ref{KS}),
\begin{eqnarray*}
P_s|f-P_sf|^2 &=&\int_0^\infty \phi _s(u)T_u|f-\int_0^\infty \phi
_s(v)T_vfdv|^2du \\
&=&\int_0^\infty \phi _s(u)T_u|\int_0^\infty \phi _s(v)(f-T_vf)dv|^2du \\
&\leq &\int_0^\infty \int_0^\infty \phi _s(u)\phi
_s(v)T_u|f-T_vf|^2dvdu
\end{eqnarray*}
with $\phi _s(v)=se^{\frac{-s^2}{4v}}v^{\frac{-3}2}.$ For $v\leq u,$
we have
\[
\|T_u|f-T_vf|^2\|=\|T_{u-v}T_v|f-T_vf|^2\|\leq \|f\|^2_{BMO^c(\T)}.
\]
For $v>u,$ let $k$ be the biggest integer smaller than $\log
_2^{\frac vu},$ we have
\begin{align*}
&\|T_u|f-T_vf|^2\|^{\frac 12} \\
&\leq \|T_u|f-T_uf|^2\|^{\frac 12}+\|T_u|T_uf-T_{2u}f|^2\|^{\frac
12}+\|T_u|T_{2u}f-T_{4u}f|^2\|^{\frac
12}+\cdots+\|T_u|T_{2^ku}f-T_vf|^2\|^{\frac 12}
\\
&\leq  c(\ln \frac vu+1)\|f\|_{BMO^c(\T)}.
\end{align*}
Therefore, we find that
\begin{eqnarray*}
\|P_s|f-P_sf|^2\|^\frac12 &\leq& (\int_0^\infty \int_u^\infty \ln
\frac vu\phi _s(u)\phi _s(v)dvdu+\int_0^\infty \int_0^\infty \phi
_s(u)\phi _s(v)dvdu)\|f\|_{BMO^c(\T)}\\&\leq& c\|f\|_{BMO^c(\T)}.
\end{eqnarray*}
Taking supremum over $t$ yields the second inequality.
\end{proof}

\begin{lemma}\label{lem1} Let $(T_t)$ be a standard semigroup. Then
\begin{eqnarray}
\|f\|_{BMO^c(\partial)}&\simeq&\sup_t \|\int_0^\infty
P_{s+t}|P_s'f|^2\min(s,t) ds\|^\frac12.\label{lem1partial}\\
{\text{ If in addition }} \Gamma^2\geq0,& &\nonumber\\
\|f\|_{BMO^c(\Gamma)}&\simeq& \|\sup_t\int_0^\infty
P_{s+t}\Gamma(P_sf,P_sf)\min(s,t) ds\|^\frac12.\label{lem1Gamma}\\
\|f\|_{BMO^c(\hat \Gamma)}&\simeq& \|\sup_t\int_0^\infty P_{s+t}\hat{\Gamma }%
 (P_sf,P_sf)\min(s,t) ds\| \label{lem1hat}.
 \end{eqnarray}
\end{lemma}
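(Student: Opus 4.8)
The plan is to prove each of the three equivalences by the same two-sided comparison scheme, playing the ``square-function'' quantity $\sup_t\|P_t\int_0^t G(P_sf,P_sf)\,s\,ds\|$ against the ``tent'' quantity $\sup_t\|\int_0^\infty P_{s+t}G(P_sf,P_sf)\min(s,t)\,ds\|$, where $G$ is one of $|P_s'\cdot|^2$, $\Gamma$, or $\hat\Gamma$. For \eqref{lem1partial} the form $G=|P_s'\cdot|^2$ is automatically positive, so no $\Gamma^2\geq0$ hypothesis is needed; for \eqref{lem1Gamma} and \eqref{lem1hat} the hypothesis $\Gamma^2\geq0$ is exactly what makes $s\mapsto P_sf$ behave like a subsolution and guarantees the intermediate operators stay positive, allowing the monotonicity Proposition \ref{monot} to be applied freely. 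First I would record that $\hat\Gamma^2\geq0$ follows from $\Gamma^2\geq0$ (already noted in the proof of Proposition \ref{bmo3}), so \eqref{lem1hat} is formally the same argument as \eqref{lem1Gamma} with $\Gamma$ replaced by $\hat\Gamma$ throughout; it suffices to carry out \eqref{lem1partial} and \eqref{lem1Gamma} and remark that the third case is identical.

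For the direction $\lesssim$ (tent dominates square function), I would start from $P_t\int_0^t G(P_sf,P_sf)\,s\,ds$ and insert the semigroup identity $P_t = P_{t-s}P_s$ is \emph{not} what is wanted; rather, use $P_t G \leq \frac{t}{s}\,$-type bounds only where legal. The cleaner route: write $P_t\int_0^t G(P_sf,P_sf)s\,ds = \int_0^t P_{t-s}\big(P_s G(P_sf,P_sf)\big)s\,ds$ and then compare $P_{t-s}P_s = P_t$ against $P_{s+t}$ via Proposition \ref{monot}, which gives $P_t \geq c\,P_{s+t}$ for $s\leq t$ since $P_{s+t}=P_{(s+t)}$ and $\tfrac{s+t}{t}\leq 2$. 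Since $\min(s,t)=s$ on $[0,t]$, this bounds the square function by the tent expression evaluated at the same $t$, up to an absolute constant; taking suprema finishes. For the reverse direction $\gtrsim$ (square function dominates tent), I would split $\int_0^\infty = \int_0^t + \int_t^\infty$. On $[0,t]$ one has $\min(s,t)=s$ and $P_{s+t}\leq P_t$ directly (no extra hypothesis), giving $\int_0^t P_{s+t}G(P_sf,P_sf)s\,ds \leq P_t\int_0^t G(P_sf,P_sf)s\,ds$, which is the square function. On $[t,\infty)$ one has $\min(s,t)=t$ and must control $\int_t^\infty t\,P_{s+t}G(P_sf,P_sf)\,ds$; here I would break $[t,\infty)$ dyadically into $[2^k t, 2^{k+1}t]$, use $G(P_sf,P_sf)\leq$ something comparable to $G(P_{2^k t}f, P_{2^k t}f)$ on each block (this is where $\Gamma^2\geq0$ enters, via $\Gamma(P_v f,P_vf)\leq P_{v-u}\Gamma(P_u f,P_u f)$-monotonicity in the proof of Proposition \ref{bmo3}; for $G=|P_s'\cdot|^2$ one instead uses that $s\|P_s'f\|$ is essentially nonincreasing), and then recognize each dyadic contribution as bounded by the square function at scale $\sim 2^k t$, summing a geometric series.

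The main obstacle I expect is the tail estimate on $[t,\infty)$ in the reverse direction: getting the dyadic blocks to telescope requires the same kind of change-of-variables juggling (substitutions $v=2s$, $v=s/3$, splitting $\int P_{b-v+2u}$ over $u$) that appears in the proof of Proposition \ref{bmo3}(iii), and one must be careful that the implied constants do not blow up — the geometric decay factor from $\min(s,t)=t$ being $\leq s$ on the $k$-th block, namely $t \leq 2^{-k}s$, must beat the at-most-polynomial growth coming from monotonicity $P_{s+t}\leq c\,2^k\,P_{\ldots}$. Concretely, on $[2^kt,2^{k+1}t]$ we gain a factor $2^{-k}$ from $t/s$ and lose at worst a factor $\sim 2^k$ from comparing $P_{s+t}$ to a Poisson operator at the block scale, so the naive bound is $O(1)$ per block and the series diverges; the fix is to extract an \emph{additional} decay by using monotonicity once more to absorb $P_{s+t} = P_{s}\cdot(\text{correction})$ against the extra room in the exponent (as in the ``$b-v/3+2u$'' trick), yielding genuine geometric decay. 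Once that is set up for $\Gamma$, the cases $\hat\Gamma$ and $|P_s'\cdot|^2$ follow verbatim, and combining with Proposition \ref{bmo3}(ii)--(iii) and Theorem \ref{lem0} closes the chain of equivalences.
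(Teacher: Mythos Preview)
Your reduction of the three cases to a single template is correct, and the paper does exactly this: \eqref{lem1partial} is obtained from \eqref{lem1Gamma} by observing that $|P_s'f|^2=\Gamma_{\partial_t^2}(P_sf,P_sf)$ is itself a gradient form (for the time-heat semigroup $e^{t\partial_t^2}$) which automatically satisfies $\Gamma^2_{\partial_t^2}\geq0$, and \eqref{lem1hat} follows from $\hat\Gamma=\Gamma+\Gamma_{\partial_t^2}$. So it is not that ``positivity of $G$'' replaces the $\Gamma^2$ hypothesis in \eqref{lem1partial}; rather the relevant $\Gamma^2\geq0$ is free. This matters because both directions of the comparison actually use $\Gamma^2\geq0$, not just positivity of $G$.

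For the direction ``square function $\lesssim$ tent'' your argument has monotonicity pointing the wrong way. Proposition~\ref{monot} gives $P_{s+t}\leq\frac{s+t}{t}P_t$ on positive elements, i.e.\ $P_t\geq \frac12 P_{s+t}$ for $s\leq t$; this bounds $P_{s+t}G$ by $P_tG$, not the other way round, so it cannot bound $P_t\int_0^tG(P_sf)\,s\,ds$ by the tent quantity. The paper instead uses $\Gamma^2\geq0$ in the form $\Gamma(P_vf,P_vf)\leq P_{v/2}\Gamma(P_{v/2}f,P_{v/2}f)$, which gives
\[
P_t\int_0^t\Gamma(P_vf,P_vf)\,v\,dv\ \leq\ \int_0^tP_{t+v/2}\Gamma(P_{v/2}f,P_{v/2}f)\,v\,dv
\ =\ 4\int_0^{t/2}P_{s+t}\Gamma(P_sf,P_sf)\,s\,ds,
\]
and this last integral is visibly dominated by the tent expression at level $t$.

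For the reverse direction your diagnosis that the naive dyadic estimate produces $O(1)$ per block is accurate, but the proposed fix (``extract an additional decay by using monotonicity once more'') is not the mechanism that works. The paper's device is to first replace $\min(s,t)$ by the comparable quantity $\frac{st}{s+t}$, and then apply monotonicity to the \emph{ratio} $a\mapsto P_a/a$: for $s\geq 2^nt$ one has $\frac{P_{s+t}}{s+t}\leq\frac{P_{2^nt}}{2^nt}$, whence
\[
\int_{2^nt}^{2^{n+1}t}P_{s+t}\Gamma(P_sf,P_sf)\,\frac{st}{s+t}\,ds
\ \leq\ \frac{1}{2^n}\int_{2^nt}^{2^{n+1}t}P_{2^nt}\Gamma(P_sf,P_sf)\,s\,ds
\ \leq\ \frac{1}{2^n}\int_0^{2^{n+1}t}P_{2^nt}\Gamma(P_sf,P_sf)\,s\,ds.
\]
The point is that the factor $t$ in the numerator and the $2^nt$ in the denominator of $P_{2^nt}/(2^nt)$ combine to give a clean $2^{-n}$, with no competing $2^n$ loss. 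A final application of $\Gamma^2\geq0$ and monotonicity (the paper's last displayed estimate, turning $\int_0^{2t}P_t\Gamma(P_sf)s\,ds$ into $8\int_0^tP_t\Gamma(P_vf)v\,dv$) converts each block into the square-function norm at scale $2^nt$, and the geometric series sums. Your ``$b-v/3+2u$'' trick from Proposition~\ref{bmo3} is not what is used here.

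Also, your claim that ``$P_{s+t}\leq P_t$ directly (no extra hypothesis)'' on $[0,t]$ is false; unital positivity does not give $P_sf\leq f$. What is true (and sufficient) is $P_{s+t}\leq 2P_t$ for $s\leq t$, again by Proposition~\ref{monot}.
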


\begin{proof}  Let $\partial_t=\frac \partial{\partial t}$ and $\Gamma_{\partial_t^2}(f,f)=|\frac{\partial
f_t}{\partial t}|^2$ the gradient forms associated with
$T_t=e^{t\partial_t^2}$ which satisfies
$\Gamma^2_{\partial_t^2}\geq0$. Then (\ref{lem1partial}) follows
from (\ref{lem1Gamma}). Moreover, (\ref{lem1hat}) follows from
$\hat{\Gamma}(f_t,f_t)=\Gamma(f_t,f_t)+|f_t'|^2$. To prove
(\ref{lem1Gamma}), we apply the condition $\Gamma^2\geq0$ and find
 \begin{align*}
  &\|\int_0^tP_t{\Gamma }(P_vf,P_vf)vdv \|
   \kl \|\int_0^tP_{\frac v2+t}{\Gamma }(P_{\frac v2}f,P_{\frac
   v2}f)vdv\|\\
  &=  4 \pl \|\int_0^{\frac t2}P_{s+t}{\Gamma}(P_sf,P_sf)sds \|
  \kl  4 \pl \|\int_0^\infty P_{s+t}{\Gamma }(P_sf,P_sf) \min(s,t) ds \| .
 \end{align*}
For the reversed relation, we use a dyadic decomposition. Indeed,
according to Proposition \ref{monot}, we have
 \[
 \frac{2^ntP_{s+t}}{s+t}{\Gamma }(P_sf,P_sf)\leq P_{2^nt}{%
 \Gamma }(P_sf,P_sf)
 \]
for $s\geq 2^nt.$ This implies
  \begin{align*}
& \frac12 \int_0^\infty P_{s+t}{\Gamma }(P_sf,P_sf) \min(s,t) ds\\
 &\kl
 \int_0^\infty P_{s+t}{\Gamma }(P_sf,P_sf) \frac{st}{s+t} ds  \\
&\lel \int_0^{2t}P_t{\Gamma}(P_sf,P_sf)\frac{st}{s+t}ds+\sum_{n=1}^\infty \frac1{2^n}\int_{2^nt}^{2^{n+1}t}\frac{2^ntP_{s+t}}{s+t}{\Gamma }(P_sf,P_sf)sds \\
 &\kl \int_0^{2t}P_t{\Gamma}(P_sf,P_sf)sds+\sum_{n=1}^\infty \frac1{2^n}\int_{2^nt}^{2^{n+1}t}P_{2^nt}{\Gamma }(P_sf,P_sf)sds \\
 &\kl   \int_0^{2t}P_t{\Gamma}(P_sf,P_sf)sds+\sum_{n=1}^\infty
\frac1{2^n}\int_0^{2^{n+1}t}P_{2^nt}{\Gamma }(P_sf,P_sf)sds.
 \end{align*}
However, we can replace $2t$ by $t$ using $\Gamma^2\gl 0$ and Lemma
\ref{monot}:
 \begin{align*}
  &\int_0^{2t}P_t{\Gamma }(P_sf,P_sf)sds
  \kl \int_0^{2t}P_{t+\frac s2}  {\Gamma }(P_{\frac s2}f,P_{\frac s2} f)sds
  \lel 4  \int_0^{t} P_{t+v}  {\Gamma }(P_vf,P_vf) vdv\\
  &\le  8 \int_0^{t} P_{t}  {\Gamma }(P_vf,P_vf) vdv
  \pl.\end{align*}
Applying this argument for every $2^{n+1}t$, we deduce the
assertion. \qd


\begin{lemma}\label{aa} Let $a>1$. Then
 \[
   \sup_t \|P_tf-P_{at}f\| \kl  \sqrt{2}(1+{\rm \log }_{\frac 32}a)
  \|f\|_{BMO^c(\partial)}.
\]
\end{lemma}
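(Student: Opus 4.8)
The plan is to control $P_tf - P_{at}f$ by a telescoping argument that reduces the estimate to a single "one-step" bound $\|P_tf - P_{2t}f\|$, and then to relate that one-step bound to the Carleson-type quantity defining $\|f\|_{BMO^c(\partial)}$. The key point is the elementary identity $P_sf - P_{s'}f = -\int_{s}^{s'} P_u'f\,du$, which rewrites a difference of Poisson means as an integral of $P_u'f$; pairing this with Cauchy--Schwarz (in the Hilbert module sense, i.e. operator-valued Cauchy--Schwarz as in Lemma \ref{bmo1}) should bring in $\int P_u|P_u'f|^2$-type expressions with an appropriate weight.

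First I would reduce to $a \le 2$ — for general $a$, write $a = 2^k a_0$ with $1 < a_0 \le 2$ and $k = \lceil \log_2 a\rceil - 1$ or so, and telescope: $P_tf - P_{at}f = (P_tf - P_{2t}f) + (P_{2t}f - P_{4t}f) + \cdots$, where the last block is $P_{2^kt}f - P_{a t}f$ with ratio $\le 2$. Each term of the form $P_{2^jt}f - P_{2^{j+1}t}f$ has, by the $a\le 2$ case applied at the dilated time $2^jt$, the same bound $\sqrt 2\,\|f\|_{BMO^c(\partial)}$ (the relevant quantities are dilation-covariant in the right way because $\sup_t$ appears in the definition). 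Counting the number of blocks gives the factor $(1 + \log_{3/2} a)$ rather than $\log_2 a$; the base $3/2$ presumably appears because the natural one-step estimate compares $P_t$ and $P_{3t/2}$ (or uses Proposition \ref{monot} with ratio $3/2$) rather than $P_t$ and $P_{2t}$, so I would actually telescope in steps of ratio $3/2$, which is harmless and yields exactly the stated constant.

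For the one-step estimate itself, say $\|P_tf - P_{3t/2}f\|$, I would use $P_tf - P_{3t/2}f = -\int_t^{3t/2} P_u'f\,du$ and apply operator Cauchy--Schwarz: $|P_tf - P_{3t/2}f|^2 \le (t/2)\int_t^{3t/2} |P_u'f|^2\,du$ as a weight balance, and then dominate $\int_t^{3t/2}|P_u'f|^2 du$ by $c\,t^{-1}\,P_t\!\int_0^{u}|P_s'f|^2 s\,ds$ evaluated suitably — here one uses that on the range $u\in[t,3t/2]$ the weight $s\mapsto s$ is comparable to $t$, and one uses monotonicity (Proposition \ref{monot}, or the semigroup property $P_u = P_{u/2}P_{u/2}$ together with positivity) to absorb the outer $P_t$ and recover the $\sup_t$ in $\|f\|_{BMO^c(\partial)}$. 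Taking operator norms and the supremum over $t$ gives $\|P_tf - P_{3t/2}f\| \le \sqrt 2\,\|f\|_{BMO^c(\partial)}$.

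The main obstacle I anticipate is bookkeeping the constants so that the final bound is exactly $\sqrt 2\,(1+\log_{3/2}a)$ and not merely $C(1+\log a)$: this forces the one-step comparison to be done at ratio $3/2$ and forces the Cauchy--Schwarz weight-splitting to be tight (no wasteful factors), and it requires the dyadic/ternary telescoping to be set up so that exactly $\le 1 + \log_{3/2}a$ steps of ratio $3/2$ cover the interval from $t$ to $at$. A secondary technical point is that all manipulations ($P_u'f$ making sense, interchanging $\int$ and $P_t$, operator Cauchy--Schwarz) must be justified for $f \in \N \cup L_2(\N)$; this is routine given the standard-semigroup assumptions and the representation \eqref{ps-form}, and I would simply invoke Lemma \ref{bmo1} for the operator Cauchy--Schwarz step.
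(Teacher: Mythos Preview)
Your telescoping architecture is correct and matches the paper exactly: one reduces to the single-step bound $\|P_tf-P_{3t/2}f\|\le\sqrt2\,\|f\|_{BMO^c(\partial)}$ and then sums $\le 1+\log_{3/2}a$ such steps. The gap is in your one-step estimate. If you write $P_tf-P_{3t/2}f=-\int_t^{3t/2}P_u'f\,du$ and apply Cauchy--Schwarz first, you obtain $(t/2)\int_t^{3t/2}|P_u'f|^2\,du$ with \emph{no} outer Poisson. Your two proposed ways of inserting one both fail: factoring $P_u'=P_t(P_{u-t}')$ gives $(t/2)\,P_t\!\int_0^{t/2}|P_v'f|^2\,dv$, and on $[0,t/2]$ the weight $v$ is \emph{not} comparable to $t$ (near $v=0$), so you cannot pass to $P_t\!\int_0^t|P_v'f|^2 v\,dv$; factoring $P_u'=P_{u/2}(P_{u/2}')$ leaves a $u$-dependent outer $P_{u/2}$, and Proposition~\ref{monot} only lets you compare Poisson operators in the wrong direction to replace it by a fixed one with the integral staying inside the right window.

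The paper's fix is to reverse the order: first pull a fixed Poisson factor out of the \emph{difference} via the semigroup property and Kadison--Schwarz, and only then write the inside as an integral. Concretely, $P_{2t}f-P_{3t}f=P_{3t/2}\big(P_{t/2}f-P_{3t/2}f\big)$, so
\[
 |P_{2t}f-P_{3t}f|^2 \;\le\; P_{3t/2}\Big|\int_{t/2}^{3t/2}P_s'f\,ds\Big|^2 \;\le\; P_{3t/2}\Big(t\!\int_{t/2}^{3t/2}\!|P_s'f|^2\,ds\Big) \;\le\; 2\,P_{3t/2}\!\int_0^{3t/2}\!|P_s'f|^2\,s\,ds,
\]
using $t\le 2s$ on $[t/2,3t/2]$. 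Now the outer index $3t/2$ matches the upper limit, which is exactly the $BMO^c(\partial)$ form, and the constant is exactly $\sqrt2$. The paper also treats the residual ratio $a\in(1,3/2]$ separately (choosing $b$ with $\frac{a-b}{1-b}=\frac32$ and writing $P_tf-P_{at}f=P_{bt}(P_{(1-b)t}f-P_{\frac32(1-b)t}f)$), a small point you did not mention but need for the final step of the telescoping.
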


\begin{proof} For $t$ fixed, we have the
 \begin{align*}
 &|P_{3t}f-P_{2t}f|^2 \kl
  P_{\frac{3t}2}(|P_{\frac{3t}2}f-P_{\frac t2}f|^2)
  \lel P_{\frac{3t}2}(|\int_{\frac t2}^{\frac{3t}2} P_s'f ds|^2)
 \\
 & \kl   P_{\frac{3t}2}(t\int_{\frac t2}^{\frac{3t}2}|P'_sf|^2ds)
  \kl  2P_{\frac{3t}2}(\int_{\frac t2}^{\frac{3t}2}|P'_sf|^2sds)
 \kl
  2P_{\frac{3t}2}(\int_0^{\frac{3t}2}|P'_sf|^2sds).
\end{align*}
This implies in particular that
 \[ \sup_t \| P_tf-P_{\frac{3t}2}f\| \kl
 \sqrt{2} \|f\|_{BMO^c(\partial)}.
 \]
For  $1<a\leq \frac 32$, choose $b\geq 0$ such that
$\frac{a-b}{1-b}=\frac 32$. Then we obtain
  \begin{align}
  \|\p  |P_tf-P_{at}f|^2 \|  &\le \| P_{bt}|P_{(1-b)t}(f)-P_{\frac
 32(1-b)t}(f)|^2 \|   \nonumber \\
   &\le  \| \p |P_{(1-b)t}(f)-P_{\frac 32(1-b)t}(f)|^2 \|
   \kl 2 \pl \|f\|_{BMO^c(\partial)}^2 \pl .
 \end{align}
We deduce
 \begin{equation}
 \|P_t(f)-P_{at}(f) \| \kl  \sqrt{2} \pl \|f\|_{BMO^c(\partial)} \label{sqrt2}
 \end{equation}
for any $1<a\le \frac32$. Consider now $a>\frac 32$. Let  $n$ be the
integer part of $\log _{\frac 32}a$.  We may use a telescopic sum
 \[ P_tf-P_{at}f \lel (P_tf-P_{\frac{3t}2}f)+(P_{\frac{3t}2}f-P_{_{\frac 32\frac{3t}2}}f)+\cdots (P_{(\frac
 32)^nt}f-P_{at}f)\pl .
 \]
We apply \eqref{sqrt2} for every summand. Then the triangle
inequality implies the assertion. \qd

\begin{theorem}\label{last} Let $(T_t)$ be a standard semigroup satisfying $\Gamma^2\gl 0$. Then $\|\pl
\|_{BMO^c(\P)}$, $\|\pl \|_{bmo^c(\P)}$ and $\|\pl
\|_{BMO^c(\hat{\Gamma})}$ are all equivalent on $\N\cup L_2(\N)$.
\end{theorem}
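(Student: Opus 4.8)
The plan is to establish a circle of equivalences
$\|f\|_{BMO^c(\hat\Gamma)}\simeq\|f\|_{BMO^c(\P)}\simeq\|f\|_{bmo^c(\P)}$
by chaining together the pointwise identities and inequalities already assembled in Proposition~\ref{bmo3}, Theorem~\ref{lem0}, Lemma~\ref{lem1} and Lemma~\ref{aa}.

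First I would prove $\|f\|_{BMO^c(\P)}\lesssim\|f\|_{BMO^c(\hat\Gamma)}$. Start from Proposition~\ref{bmo3}(iii), which under $\Gamma^2\ge0$ gives the pointwise bound
$P_b|f|^2-|P_bf|^2\le 180\int_0^\infty P_{b/3+s}\hat\Gamma(P_sf,P_sf)\min(b/3,s)\,ds$.
Taking operator norms and the supremum over $b$, the right-hand side is exactly (a constant times) the quantity $\sup_t\|\int_0^\infty P_{t+s}\hat\Gamma(P_sf,P_sf)\min(s,t)\,ds\|$, which by \eqref{lem1hat} of Lemma~\ref{lem1} is equivalent to $\|f\|_{BMO^c(\hat\Gamma)}^2$; but the left-hand side $\sup_b\|P_b|f|^2-|P_bf|^2\|$ is by definition $\|f\|_{bmo^c(\P)}^2$, so at this stage I get $\|f\|_{bmo^c(\P)}\lesssim\|f\|_{BMO^c(\hat\Gamma)}$. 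To pass from $bmo^c(\P)$ to $BMO^c(\P)$ I invoke Proposition~\ref{gp}(ii) applied to the Poisson semigroup: $\|f\|_{BMO^c(\P)}\le 2\|f\|_{bmo^c(\P)}+\sup_t\|P_tf-P_{2t}f\|$, and the oscillation term is controlled by $\sup_t\|P_tf-P_{2t}f\|\lesssim\|f\|_{BMO^c(\partial)}\le c\|f\|_{BMO^c(\P)}$ via Lemma~\ref{aa} with $a=2$ — wait, that last step would be circular, so instead I bound it directly: by Lemma~\ref{aa}, $\sup_t\|P_tf-P_{2t}f\|\lesssim\|f\|_{BMO^c(\partial)}$, and $\|f\|_{BMO^c(\partial)}\lesssim\|f\|_{BMO^c(\hat\Gamma)}$ since $\|f\|_{BMO^c(\partial)}^2=\sup_t\|P_t\int_0^t|P_s'f|^2s\,ds\|\le\sup_t\|P_t\int_0^t\hat\Gamma(P_sf,P_sf)s\,ds\|=\|f\|_{BMO^c(\hat\Gamma)}^2$ because $\hat\Gamma\ge|P_s'f|^2$ pointwise. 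Hence $\|f\|_{BMO^c(\P)}\lesssim\|f\|_{BMO^c(\hat\Gamma)}$.

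Next I would prove the reverse directions. The chain $\|f\|_{BMO^c(\partial)}\le c\|f\|_{BMO^c(\P)}$ is Theorem~\ref{lem0}. For $\|f\|_{BMO^c(\hat\Gamma)}\lesssim\|f\|_{BMO^c(\P)}$ I use the first inequality of Proposition~\ref{bmo3}(iii), namely $\frac14\int_0^\infty P_{b+s}\hat\Gamma(P_sf,P_sf)\min(s,b)\,ds\le P_b|f|^2-|P_bf|^2$, together with Proposition~\ref{gp}-type reasoning: the right side has norm at most $\|f\|_{bmo^c(\P)}^2$, and $\|f\|_{bmo^c(\P)}\le\|f\|_{BMO^c(\P)}$ follows from Proposition~\ref{gp}(i)/(iii) applied to $\P$, or more simply from the pointwise estimate $P_b|f|^2-|P_bf|^2\le P_b|f-P_bf|^2$ (expand the square and use that $P_b$ preserves the trace-pairing / is a Schwarz map). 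Combining with \eqref{lem1hat} yields $\|f\|_{BMO^c(\hat\Gamma)}\lesssim\|f\|_{BMO^c(\P)}$. Finally $\|f\|_{bmo^c(\P)}\lesssim\|f\|_{BMO^c(\hat\Gamma)}$ was already obtained above, and $\|f\|_{bmo^c(\P)}\le\|f\|_{BMO^c(\P)}$ is immediate, so all three norms are mutually equivalent.

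The main obstacle is bookkeeping the constants and, more substantively, making sure the argument is not circular: the oscillation term $\sup_t\|P_tf-P_{2t}f\|$ that appears when upgrading $bmo^c(\P)$ to $BMO^c(\P)$ must be dominated by something already under control (here $\|f\|_{BMO^c(\partial)}$ via Lemma~\ref{aa}, then $\|f\|_{BMO^c(\partial)}$ by $\|f\|_{BMO^c(\hat\Gamma)}$ through the trivial pointwise domination $|P_s'f|^2\le\hat\Gamma(P_sf,P_sf)$), rather than by $\|f\|_{BMO^c(\P)}$ itself. Once the direction of each implication is pinned down so that the diagram of inequalities has no cycle except the final equivalence, the proof is just assembling Propositions~\ref{bmo3}, \ref{gp}, Theorem~\ref{lem0} and Lemmas~\ref{lem1}, \ref{aa}. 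A secondary technical point is the density/closability issue for $\hat\Gamma(P_sf,P_sf)$ when $f\in\N\cup L_2(\N)$ rather than $f\in\A$; this is handled by the remark in the paper that only the averaged expression $P_t\hat\Gamma(P_sf,P_sf)$ is needed and it makes sense for all such $f$.
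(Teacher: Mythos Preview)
Your approach is essentially the paper's: both first establish $\|f\|_{bmo^c(\P)}\simeq\|f\|_{BMO^c(\hat\Gamma)}$ via Proposition~\ref{bmo3}(iii) and Lemma~\ref{lem1}, then use Proposition~\ref{gp}(iii) for $\|f\|_{bmo^c(\P)}\lesssim\|f\|_{BMO^c(\P)}$, and close the loop with Proposition~\ref{gp}(ii) plus Lemma~\ref{aa} to control the oscillation term $\sup_t\|P_tf-P_{2t}f\|$. One caveat: your ``simpler'' alternative, the pointwise estimate $P_b|f|^2-|P_bf|^2\le P_b|f-P_bf|^2$, is \emph{false} for general unital completely positive maps (take $f=e^{ix}$ on $\mathbb{T}$: the left side is $1-e^{-2b}$, the right is $(1-e^{-b})^2$), so you must indeed rely on Proposition~\ref{gp}(iii) applied to $(P_t)$---which is legitimate since $\Gamma^2\ge0$ for $(T_t)$ does imply the corresponding property for $(P_t)$.
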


\begin{proof} According to Proposition \ref{bmo3} we know that
 \[  \sup_t \|\int_0^\infty P_{s+t}\hat{\Gamma }(P_sf,P_sf)\min(s,t)ds\|^{\frac 12}
 \sim_{180} \|f\|_{bmo^c(\P)} \pl .\]
Then Lemma \ref{lem1} implies that $\|\pl \|_{bmo^c(\P)}$ and
$\|\pl\|_{BMO^c(\hat{\Gamma})}$ are equivalent. Proposition \ref{gp}
(iii) provides the upper estimate of $\|\pl \|_{bmo^c(\P)}$ against
$\|\pl\|_{BMO^c(\P)}$. Conversely, we deduce from Proposition
\ref{gp} (ii), Lemma \ref{aa}, Lemma \ref{lem1} and  Proposition
\ref{bmo3} (i) that
 \begin{align*}
 \|f\|_{BMO^c(\P)}&\kl 2 \|f\|_{bmo^c(\P)}+ \sup_t
 \|P_tf-P_{2t}f\| \\
 &\le 2 \|f\|_{bmo^c(\P)} + \sqrt{2}(1+\log_{\frac32}2)
 \|f\|_{BMO^c(\hat{\Gamma})} \\
 &\le 2 \|f\|_{bmo^c(\P)} + 2\sqrt{2}\pl 2\sqrt{6}\pl \|f\|_{bmo^c(\P)}
 \lel (2+8\sqrt{3}) \|f\|_{bmo^c(\P)} \pl .
 \end{align*}
Thus all the norms are equivalent on $\N\cup L_2(\N)$. \qd

\section{Bounded Fourier Multipliers on BMO}

In this section we prove the $BMO$ boundedness for  certain
singular integrals obtained as a function of the generator for
arbitrary semigroups. The ideas for the proof   can be traced back
to E. Stein's universal $L_p$-bounded for Fourier multipliers.
\begin{lemma}\label{convex gamma} Let $\Gamma$ be the gradient form associated with a standard semigroup $S_t$. Then
\begin{eqnarray}
\Gamma (\int_{\Omega} f_td\mu (t),\int_{\Omega} f_td\mu (t))\leq
\int_\Omega |d\mu (t)|\int_{\Omega} \Gamma (f_t,f_t)|d\mu (t)|, \label{convex gammain}
\end{eqnarray}
for $\N$-valued function $f$ on a measure space $\{\Omega, \mu\}$
such that $\Gamma(f_t,f_t)$ is weakly measurable.

Let
$P_t$ be the Poisson semigroup subordinated to a standard semigroup
$T_t$ satisfying $\Gamma^2\geq0$. Then,
\[
{\Gamma }(v^n\partial^n P_vf,v\partial^n P_vf)\leq c_nP_{\frac v2}{\Gamma
}(f,f),
\]Here $n\in \nz$ and $\partial_t^nP(t)$ is the $n$-th
derivative of $P(t)$ with respect to $t$.
\end{lemma}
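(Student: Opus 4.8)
First I would establish the convexity inequality \eqref{convex gammain}. The idea is that $\Gamma$ is a non-negative sesquilinear form, so the map $(f,g)\mapsto \Gamma(f,g)$ behaves like an inner product with values in $\N$. Concretely, for any $\N$-valued function $f$ on $\{\Omega,\mu\}$ one has the Cauchy--Schwarz-type estimate: treating $d\nu = |d\mu|$ as a positive measure and $h(t) = \frac{d\mu}{d|d\mu|}(t)$ as a unimodular density, write $\int_\Omega f_t\,d\mu(t) = \int_\Omega f_t h(t)\,d\nu(t)$. Then, using that $\Gamma$ is completely positive in the appropriate operator-valued sense (as exhibited by the GNS/Hilbert $C^*$-module construction of $\Gamma$ from the earlier Proposition-proof), the operator Cauchy--Schwarz inequality gives
\[
\Gamma\!\Big(\int_\Omega f_t\,d\mu(t),\int_\Omega f_t\,d\mu(t)\Big) \kl \nu(\Omega)\int_\Omega \Gamma(f_t h(t), f_t h(t))\,d\nu(t) = \nu(\Omega)\int_\Omega \Gamma(f_t,f_t)\,d\nu(t),
\]
since $h(t)$ is a scalar of modulus one and $\Gamma(f_t h, f_t h) = |h|^2\Gamma(f_t,f_t) = \Gamma(f_t,f_t)$. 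The weak measurability hypothesis is exactly what is needed to make the vector-valued integrals and the final operator inequality legitimate. I would present this via the GNS factorization $\Gamma(f,g) = u(f)^*u(g)$ for a suitable linear map $u$ into a Hilbert $C^*$-module, reducing \eqref{convex gammain} to the ordinary operator-valued Jensen/Cauchy--Schwarz inequality $|{\int u(f_t)\,d\mu(t)}|^2 \le \|\mu\|\int |u(f_t)|^2\,|d\mu(t)|$.

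Second, for the derivative estimate ${\Gamma}(v^n\partial^n P_vf, v^n\partial^n P_vf) \le c_n P_{v/2}{\Gamma}(f,f)$, I would start from the integral representation \eqref{ps-form} for $P_v$ (with the kernel $\phi_v(u) = \frac{1}{2\sqrt\pi}v e^{-v^2/4u}u^{-3/2}$), differentiate $n$ times in $v$ under the integral sign, and observe that $v^n\partial_v^n\phi_v(u)$ can be written as $\psi_{n,v}(u)$ where the total variation $\int_0^\infty |\psi_{n,v}(u)|\,du$ is a finite constant $d_n$ \emph{independent of $v$} (this is a scaling/homogeneity computation: substituting $u = v^2 w$ shows $\int_0^\infty|v^n\partial_v^n\phi_v(u)|\,du$ is dimensionless). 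Thus $v^n\partial_v^n P_v f = \int_0^\infty \psi_{n,v}(u) T_u f\,du$. Now apply the convexity inequality \eqref{convex gammain} just proved, with $d\mu(u) = \psi_{n,v}(u)\,du$, to get
\[
{\Gamma}(v^n\partial_v^n P_v f, v^n\partial_v^n P_v f) \kl d_n\int_0^\infty {\Gamma}(T_u f, T_u f)\,|\psi_{n,v}(u)|\,du \kl d_n\int_0^\infty T_u{\Gamma}(f,f)\,|\psi_{n,v}(u)|\,du,
\]
where the last step uses $\Gamma^2\geq0$ in the form ${\Gamma}(T_u f,T_u f)\le T_u{\Gamma}(f,f)$. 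Finally I would bound $\int_0^\infty T_u{\Gamma}(f,f)\,|\psi_{n,v}(u)|\,du$ by a constant times $P_{v/2}{\Gamma}(f,f)$: since $|\psi_{n,v}(u)|$ is dominated by a constant multiple of $\phi_{v/\sqrt2}(u)$ up to polynomial corrections (because the Gaussian factor $e^{-v^2/4u}$ only improves when $v$ is replaced by $v/\sqrt2$, and the polynomial-in-$v^2/u$ prefactors are absorbed using $x^k e^{-x}\le C_k e^{-x/2}$), we get a pointwise kernel domination $|\psi_{n,v}(u)|\le c_n'\,\phi_{v/2}(u)$, hence $\int_0^\infty T_u{\Gamma}(f,f)|\psi_{n,v}(u)|\,du \le c_n' P_{v/2}{\Gamma}(f,f)$ by \eqref{ps-form} applied to the positive element ${\Gamma}(f,f)$.

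The main obstacle I anticipate is the kernel-domination step: one must verify carefully that $v^n\partial_v^n\phi_v(u)$, which is $\phi_v(u)$ times a polynomial in $v^2/u$ of degree $n$ with coefficients depending on $n$, can be absorbed by $\phi_{v/2}(u)$ (or $\phi_{cv}(u)$ for some fixed $c<1$), and to extract the explicit (or at least finite) constant $c_n$. This is where the factor $\min(s,t)$-type estimates and the monotonicity Proposition \ref{monot} are genuinely used, and where one has to be slightly careful that the polynomial prefactor does not destroy integrability or the Gaussian decay. Everything else is a routine application of \eqref{ps-form}, differentiation under the integral, and the already-established convexity inequality together with the hypothesis $\Gamma^2\geq0$.
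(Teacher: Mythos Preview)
Your proposal is correct and, for the second assertion, follows exactly the paper's route: write $v^n\partial_v^n P_v=\int_0^\infty T_u\,d\mu_n(u)$ via \eqref{ps-form}, note $\int|d\mu_n|\le c_n$ and $\int T_u|d\mu_n(u)|\le c_n P_{v/2}$, then apply \eqref{convex gammain} and $\Gamma^2\ge0$. For the first assertion the paper takes a slightly more elementary path than you do: rather than invoking a GNS factorization $\Gamma(f,g)=u(f)^*u(g)$ and the operator-valued Cauchy--Schwarz inequality, it simply uses the parallelogram identity $\Gamma(\tfrac{x+y}{2},\tfrac{x+y}{2})+\Gamma(\tfrac{x-y}{2},\tfrac{x-y}{2})=\tfrac12(\Gamma(x,x)+\Gamma(y,y))$ together with $\Gamma\ge0$ to obtain midpoint convexity of $x\mapsto\Gamma(x,x)$, and then \eqref{convex gammain} is Jensen's inequality for this operator-convex functional. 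Your Cauchy--Schwarz argument is equally valid and perhaps more transparent about where the constant $\int_\Omega|d\mu|$ comes from; the paper's argument has the virtue of needing only bilinearity and positivity, with no appeal to module structure.
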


\begin{proof}
The bilinearity of $\Gamma$ implies
\begin{equation*}
\Gamma(\frac{x+y}2,\frac{x+y}2)+\Gamma(\frac{x-y}2,\frac{x-y}2)=\frac12(\Gamma(x,x)+\Gamma(y,y)),
\end{equation*}
for $x,y\in \A$.
Note $\Gamma(x,x)\geq0$. We have
\begin{equation*}
\Gamma(\frac{x+y}2,\frac{x+y}2)\leq\frac12(\Gamma(x,x)+\Gamma(y,y)),
\end{equation*}
for $x,y\in \A$.
(\ref{convex gammain}) follows by the convexity of $\Gamma(\cdot,\cdot)$, which we just proved.

By (\ref{ps-form}), we may write $v^n\partial^n P_v$ as $\int_0^\infty T_{\tau} d\mu_n (\tau )$ with $%
\int_0^\infty |d\mu_n (\tau )|\leq c_n$ and $\int_0^\infty T_{\tau}
|d\mu_n (\tau )|\leq cP_{\frac v2}. $ We deduce  from (\ref{convex
gammain}) and $\Gamma^2\gl 0$ that
\begin{align*}
{\Gamma }(v^n\partial^n P_vf,v^n\partial^n P_vf) &\leq c_n\int_0^\infty
{\Gamma }(T_{\tau} f,T_{\tau} f)|d\mu_n (\tau )| \leq c_n\int_0^\infty
T_{\tau} {\Gamma }(f,f)|d\mu_n (\tau )| \leq
  c_nP_{\frac v2}{\Gamma }(f,f)\pl . \qedhere
\end{align*} \end{proof}

Recall that $\Gamma_{\partial_t^2}(f,g)=(\partial_t f^*)(\partial_t
g)$ is the gradient forms associated with $T_t=e^{t\partial_t^2}$
and satisfies $\Gamma^2_{\partial_t^2}\geq0$. According to  Lemma
\ref{convex gamma}, we know that
\begin{eqnarray}\label{convex partial}
|\partial_ v (v\partial_ vP_vf)|^2\leq cP_{\frac v2}|v{\partial_v
}P_vf|^2.
\end{eqnarray}
Since $\widehat{\Gamma}=\Gamma+\Gamma_{\partial_t^2}$, we obtain
 \begin{eqnarray}\label{convex gammahat}
\widehat{\Gamma}(vP_v'f,vP_v'f)\leq cP_{\frac
v2}\widehat{\Gamma}(P_vf,P_vf).
\end{eqnarray}

\noindent We now want to define singular integrals of the form
$F(A)$  where $F$ is a nice function. We follows Stein's idea and
assume that $F$ is given by a Laplace transform.  Let $a$ be a
scalar valued function such that
 \begin{equation}
 s\int_s^\infty\frac {|a(v-s)|^2}{v^2}dv\leq c_a^2 ,\label{ca}\end{equation}
 for all $s>0$ and some constant positive $c_a$. Define $M_a$ as
\[
M_a(f)=\int_0^\infty a(t)\frac {\partial P_tf}{\partial t}dt.
\]

\begin{lemma}\label{mulgamma} Assume $T_t$ be a standard semigroup
satisfying $\Gamma^2\geq0$. We have $$\|M_a(f)\|_{BMO^c(\Gamma)}\leq
cc_a\|f\|_{BMO^c(\Gamma)}.$$
\end{lemma}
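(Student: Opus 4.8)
The goal is to control $\|M_a(f)\|_{BMO^c(\Gamma)} = \sup_t \|P_t\int_0^t \Gamma(P_s M_a f, P_s M_a f)\,s\,ds\|^{1/2}$. The plan is to first compute $P_s M_a f$ explicitly. Since $M_a(f) = \int_0^\infty a(r)\,\partial_r P_r f\,dr$ and the $P_r$ form a semigroup, we have $P_s M_a(f) = \int_0^\infty a(r)\,\partial_r P_{r+s}f\,dr = \int_s^\infty a(v-s)\,\partial_v P_v f\,dv$ after the substitution $v = r+s$ (using that $\partial_r P_r f = P_r'f$ and $P_{r+s}' = \partial_v|_{v=r+s}$). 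So the quantity inside $\Gamma$ is an integral $\int_s^\infty a(v-s)\,v P_v'f\,\frac{dv}{v}$, which I want to view as $\int_s^\infty f_v\,d\mu(v)$ with $f_v = vP_v'f$ and $d\mu(v) = \frac{a(v-s)}{v}\,dv$.

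Next I would apply the convexity inequality \eqref{convex gammain} from Lemma \ref{convex gamma}:
\[
\Gamma(P_s M_a f, P_s M_a f) \le \Big(\int_s^\infty \frac{|a(v-s)|}{v}\,dv\Big)\int_s^\infty \Gamma(vP_v'f, vP_v'f)\,\frac{|a(v-s)|}{v}\,dv.
\]
For the first factor, Cauchy–Schwarz gives $\int_s^\infty \frac{|a(v-s)|}{v}\,dv = \int_s^\infty \frac{|a(v-s)|}{v}\cdot\frac{v^{1/2}}{v^{1/2}}\cdot\frac{1}{1}\,dv$; more carefully, $\big(\int_s^\infty \frac{|a(v-s)|}{v}\,dv\big)^2 \le \big(\int_s^\infty \frac{|a(v-s)|^2}{v^2}\,dv\big)\big(\int_s^\infty dv\big)$ diverges, so instead I should pair $\frac{|a(v-s)|}{v}$ as $\frac{|a(v-s)|}{v^{1/2}}\cdot\frac{1}{v^{1/2}}$ only on a truncated range, or better, use the hypothesis \eqref{ca} directly: $s\int_s^\infty \frac{|a(v-s)|^2}{v^2}\,dv \le c_a^2$. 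The right normalization is to write the first factor as $\int_s^\infty \frac{|a(v-s)|}{v}\,dv \le \big(\int_s^\infty \frac{|a(v-s)|^2}{v^2}v\,dv\big)^{1/2}\big(\int_s^\infty \frac{dv}{v}\big)^{1/2}$ — still divergent — so the clean route is: since we will integrate against $s\,ds$ on $[0,t]$ and then apply $P_t$, I would instead bound $\Gamma(P_sM_af,P_sM_af)$ by $c_a \cdot s^{-1/2}\big(\int_s^\infty \Gamma(vP_v'f,vP_v'f)\,\frac{|a(v-s)|}{v}\,dv\big)$ using \eqref{ca} for the scalar factor with the weight $v^{-1/2}$ distributed appropriately. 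Then I invoke \eqref{convex gammahat} (equivalently the $\Gamma$-part of Lemma \ref{convex gamma}), $\Gamma(vP_v'f, vP_v'f) \le cP_{v/2}\Gamma(P_vf,P_vf)$, to replace the derivative term.

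After these substitutions, $P_t\int_0^t \Gamma(P_sM_af, P_sM_af)\,s\,ds$ is dominated (up to $cc_a^2$) by an expression of the form $\int_0^t\int_s^\infty |a(v-s)|\,v^{-1}\, P_{t+v/2}\Gamma(P_vf,P_vf)\,(\text{weight in }s,v)\,dv\,ds$. I would then exchange the order of integration (Tonelli, everything positive), carry out the $s$-integral first using \eqref{ca} to pull out $c_a^2$ and produce a factor like $\min(v,t)$ or $v\wedge t$, and arrive at something comparable to $\int_0^\infty P_{t+v/2}\Gamma(P_vf,P_vf)\min(v,t)\,dv$. By Lemma \ref{lem1}, equation \eqref{lem1Gamma}, the supremum over $t$ of (the norm of) this last expression is equivalent to $\|f\|_{BMO^c(\Gamma)}^2$, which finishes the proof.

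\textbf{Main obstacle.} The delicate point is bookkeeping the scalar weights so that the single hypothesis \eqref{ca}, $s\int_s^\infty |a(v-s)|^2 v^{-2}\,dv \le c_a^2$, is exactly what is consumed — the two applications of Cauchy–Schwarz (once inside \eqref{convex gammain} to get the $\int|a|/v$ prefactor, once after Fubini in the $s$-variable) must be arranged so their weights multiply to $|a(v-s)|^2/v^2$ against $s\,ds$, and no extra divergent factor of $\int dv/v$ sneaks in. I expect this to require choosing the split $\frac{|a(v-s)|}{v} = \frac{|a(v-s)|}{v}\cdot 1$ with the ``$1$'' being absorbed against the $s\,ds$ measure and the $P_{v/2}$ monotonicity (Proposition \ref{monot}) rather than a naive Cauchy–Schwarz on $[s,\infty)$; getting the exponents to line up is the real content, and the rest is positivity, Tonelli, and citing Lemma \ref{lem1}.
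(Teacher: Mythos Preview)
Your overall strategy coincides with the paper's: compute $P_sM_af=\int_s^\infty a(v-s)P_v'f\,dv$, apply a convexity inequality to $\Gamma[\cdot]$, use the second part of Lemma~\ref{convex gamma} to pass from $\Gamma[vP_v'(\cdot)]$ to $P_{v/2}\Gamma[\cdot]$, swap the $s$ and $v$ integrals, and finish with Lemma~\ref{lem1}. The gap is exactly at the step you flag as the main obstacle: applying \eqref{convex gammain} with $f_v=vP_v'f$ and $d\mu(v)=\frac{a(v-s)}{v}dv$ produces the prefactor $\int_s^\infty\frac{|a(v-s)|}{v}\,dv$, and none of your attempted Cauchy--Schwarz splits closes. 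The paper uses the Cauchy--Schwarz \emph{form} of the same convexity inequality,
\[
\Gamma\Big[\int_s^\infty \phi_v F_v\,dv\Big]\ \le\ \Big(\int_s^\infty |\phi_v|^2\,dv\Big)\Big(\int_s^\infty \Gamma[F_v]\,dv\Big),
\]
with $\phi_v=a(v-s)/v$ and $F_v=vP_v'f$. This is still a consequence of \eqref{convex gammain}: take $f_v=F_v/|\phi_v|$ and $d\mu=\phi_v|\phi_v|\,dv$, so that $\int f_v\,d\mu=\int\phi_vF_v\,dv$, $\int|d\mu|=\int|\phi_v|^2\,dv$, and $\int\Gamma[f_v]\,|d\mu|=\int\Gamma[F_v]\,dv$. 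After multiplying by the $s$ from $s\,ds$ this gives \emph{exactly} hypothesis \eqref{ca}, $s\int_s^\infty|a(v-s)|^2v^{-2}\,dv\le c_a^2$; no divergent $\int dv/v$ ever appears, and no second Cauchy--Schwarz is needed.

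A smaller correction: the inequality you quote as ``$\Gamma(vP_v'f,vP_v'f)\le cP_{v/2}\Gamma(P_vf,P_vf)$'' is not what Lemma~\ref{convex gamma} states; that lemma gives $\Gamma(vP_v'g,vP_v'g)\le cP_{v/2}\Gamma(g,g)$. The paper first substitutes $v\mapsto 2v$ in $\int_s^\infty\Gamma[vP_v'f]\,dv$ (picking up a factor $8$ and the new lower limit $s/2$), rewrites $2vP_{2v}'f=2vP_v'(P_vf)$, and only then applies Lemma~\ref{convex gamma} with $g=P_vf$ to get $\Gamma[vP_v'P_vf]\le cP_{v/2}\Gamma[P_vf]$. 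After that, Fubini in $(s,v)$ over $\{0\le s\le t,\ v\ge s/2\}$ produces the factor $\min(t,2v)$, and Lemma~\ref{lem1} finishes, just as you outlined.
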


\begin{proof} Let
$$S_t(f) =P_t\int_0^ts\Gamma (P_sf,P_sf)ds.$$
We simplify the notation by using ${\Gamma }[f]={\Gamma }(f,f)$. Let
us  compute $\|S_t(M_a(f))\|$:
\begin{align*}
\|S_t(M_a(f))\| =\|P_t\int_0^ts {\Gamma }[P_sM_a(f)]ds\|
 &=
 \|\int_0^tsP_t {\Gamma }[P_s\int_0^\infty a(v)\frac{\partial P_v}{%
\partial v}fdv]ds\| \\
&=\|\int_0^tsP_t {\Gamma }[\int_0^\infty a(v)\frac{\partial P_{v+s}}{%
\partial v}fdv]ds\| \\
 &=
  \|\int_0^tsP_t {\Gamma }[\int_s^\infty a(v-s)\frac 1vv\frac{%
 \partial P_v}{\partial v}fdv]ds\| \\
\text{(first inequality of Lemma  \ref{convex gamma})} &\leq \|\int_0^tP_t\bigg(s\int_s^\infty \frac{%
|a|^2}{v^2}dv\int_s^\infty  {\Gamma }[v\frac{\partial P_v}{\partial v}%
f]dv\bigg)ds\| \\
\text{(assumption (\ref{ca}))} &\le
  c_a^2\|\int_0^tP_t\bigg(\int_s^\infty  {\Gamma }[v\frac{\partial P_v}{%
\partial v}f]dv\bigg)ds\| \\
\text{(change of variables)} &=8c_a^2\|\int_0^tP_t\bigg(\int_{\frac
s2}^\infty
 {\Gamma }[v\frac{\partial P_v}{\partial v}P_vf]dv\bigg)ds\| \\
\text{(Lemma \ref{convex gamma})} &\leq
 cc_a^2\|\int_0^tP_t\bigg(\int_{\frac
s2}^\infty P_{\frac v2} {\Gamma }[P_vf]dv\bigg)ds\| \\
 &\le  cc_a^2\|\int_0^t\int_{\frac s2}^\infty P_{\frac v2+t} {\Gamma }[P_vf%
]dvds\| \\
({\rm Integrate}\ ds\ {\rm first}\ )&= cc_a^2\|\int_0^\infty P_{\frac v2+t} {\Gamma }[P_vf]\min (t,2v)dv\| \\
({\rm Lemma \ \ref{lem1}}\ ) &\leq  cc_a^2\|f\|^2_{BMO^c(\Gamma)}
\end{align*}
Taking the supremum over $t$, we obtain
\begin{align*}
\|M_a(f)\|_{BMO^c(\Gamma)} &=\sup_t\|S_t(M_a(f))\|^\frac12\leq
cc_a\|f\|_{BMO^c(\Gamma)} \qedhere .
\end{align*}
\end{proof}

Using (\ref{convex partial}), exactly the same proof shows that,
without assuming $\Gamma^2\geq0$,
 \begin{equation}\label{parest}
  \|M_a:BMO^c(\partial)\to BMO^c(\partial) \| \kl  cc_a.
  \end{equation}
The same technique also allows us to obtain
estimates for operators of the form
 \[ M_{a,n}=\int_0^\infty a(t)t^{n-1}\partial_t^nP(t)dt\pl .\]
 Let us state this
explicitly.

\begin{theorem}\label{mul1} Let $T_t$ be a standard semigroup. Then
 \begin{eqnarray} \|M_{a,n}(f)\|_{BMO^c(\partial)}\leq
c_nc_a\|f\|_{BMO^c(\partial)}.
\end{eqnarray}
If in addition, $T_t$ satisfies $\Gamma^2\geq0$, then
\begin{eqnarray} \|M_{a,n}(f)\|_{BMO^c(\Gamma)}\leq
c_nc_a\|f\|_{BMO^c(\Gamma)}.
\end{eqnarray}
\end{theorem}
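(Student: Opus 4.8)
The plan is to adapt the proof of Lemma \ref{mulgamma} almost verbatim, replacing the single antiderivative $\partial_v P_v$ by the iterated derivative $t^{n-1}\partial_t^n P_t$ and keeping track of the extra powers. First I would set, for the $BMO^c(\partial)$ statement, $S_t(f)=P_t\int_0^t s\,|P_s'f|^2\,ds$ and expand $\|S_t(M_{a,n}(f))\|$; the key algebraic step is the shift identity $\partial_v^n P_{v+s} f = \partial_v^n P_v (P_s f)$ and the rewriting
\[
 M_{a,n}(f) = \int_0^\infty a(v)\,v^{n-1}\partial_v^n P_v f\,dv = \int_s^\infty a(v-s)\,\frac{(v-s)^{n-1}}{v^{n-1}}\,v^{n-1}\partial_v^n P_v f\,dv,
\]
after which, since $(v-s)^{n-1}/v^{n-1}\le 1$ for $v\ge s$, the Cauchy--Schwarz step of Lemma \ref{convex gamma} together with the hypothesis \eqref{ca} produces the factor $c_a^2$ exactly as before.

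Next I would invoke the gradient estimate \eqref{convex partial} from Lemma \ref{convex gamma}, in the iterated form $|\partial_v(v^{n-1}\partial_v^n P_v f)|^2 \le c_n P_{v/2}|v\,\partial_v P_v f|^2$ (this is the case of Lemma \ref{convex gamma} applied to $\Gamma_{\partial_t^2}$, with $v^n\partial^n$ in place of $v\partial$), to replace $\Gamma_{\partial_t^2}[v^{n-1}\partial_v^n P_v f]$ by $c_n P_{v/2}\,\Gamma_{\partial_t^2}[P_v f]$; absorbing $P_{v/2}$ into the outer $P_t$ via $P_{v/2+t}$ and integrating in $ds$ first yields $\min(t,2v)$ as the weight. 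The final line is then exactly the comparison of Lemma \ref{lem1} (specifically \eqref{lem1partial}), giving $\|S_t(M_{a,n}(f))\|\le c_n c_a^2\|f\|^2_{BMO^c(\partial)}$, and taking the square root and the supremum over $t$ finishes the first inequality. For the second inequality I would run the identical argument with $\Gamma$ in place of $\Gamma_{\partial_t^2}$ throughout, using the $\Gamma^2\ge 0$ version of the gradient bound $\Gamma(v^{n-1}\partial_v^n P_v f,\,v^{n-1}\partial_v^n P_v f)\le c_n P_{v/2}\Gamma(P_v f,P_v f)$ supplied by Lemma \ref{convex gamma}, and the $\Gamma$-version of Lemma \ref{lem1}, i.e. \eqref{lem1Gamma}.

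The one genuine obstacle is bookkeeping on the derivative estimate: I need $v^{n}\partial_v^n P_v f$ to be expressible as $\int_0^\infty T_\tau\,d\mu_n(\tau)$ with $\int|d\mu_n|\le c_n$ and $\int T_\tau|d\mu_n|\le c P_{v/2}$, so that both the Cauchy--Schwarz/convexity step and the $P_{v/2}$-domination go through; this is precisely the content of the measure $\mu_n$ used in the proof of Lemma \ref{convex gamma}, obtained by differentiating \eqref{ps-form} $n$ times in $t$ and checking that the resulting kernel $t^n\partial_t^n\!\big(t e^{-t^2/4u}u^{-3/2}\big)$ is integrable in $u$ with the exponential decay governed by $e^{-t^2/8u}\cdot e^{-t^2/8u}$, the second factor yielding $P_{v/2}$. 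Once that lemma is in hand --- and it is stated earlier --- the rest is a transcription of the chain of inequalities in the proof of Lemma \ref{mulgamma}, with constants relabelled $c_n c_a$.
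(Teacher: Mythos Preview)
Your approach is exactly what the paper intends: it gives no separate proof of Theorem~\ref{mul1} beyond the sentence ``the same technique also allows us to obtain estimates for operators of the form $M_{a,n}$,'' so adapting Lemma~\ref{mulgamma} line by line with $v^n\partial_v^n$ in place of $v\partial_v$ is precisely the paper's argument.

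There is one bookkeeping slip worth fixing. In your rewriting of $P_sM_{a,n}(f)$ you split the integrand as $\dfrac{(v-s)^{n-1}}{v^{n-1}}\cdot v^{n-1}\partial_v^n P_v f$. With that choice the scalar weight is $a(v-s)\,(v-s)^{n-1}/v^{n-1}$, and $s\int_s^\infty|\text{scalar}|^2\,dv$ is only bounded by $s\int_0^\infty|a|^2$, which is not controlled by~\eqref{ca}. To mirror the $n=1$ case (where the scalar is $a(v-s)/v$) you must absorb one more power of $v$ into the operator factor: write
\[
 P_sM_{a,n}(f)=\int_s^\infty \frac{a(v-s)(v-s)^{n-1}}{v^{n}}\cdot v^{n}\partial_v^n P_v f\,dv.
\]
Then $\bigl|\tfrac{a(v-s)(v-s)^{n-1}}{v^n}\bigr|^2\le \tfrac{|a(v-s)|^2}{v^2}$, so \eqref{ca} applies verbatim, and the operator piece $v^n\partial_v^n P_v$ is exactly what the second part of Lemma~\ref{convex gamma} handles. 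After this correction the change of variables $v=2u$, the estimate $\Gamma[u^n\partial_u^nP_u(P_uf)]\le c_nP_{u/2}\Gamma[P_uf]$ (resp.\ $|u^n\partial_u^nP_u(P_u'f)|^2\le c_nP_{u/2}|P_u'f|^2$ for the $\partial$ case), the Fubini step producing $\min(t,2v)$, and the appeal to Lemma~\ref{lem1} all go through exactly as in Lemma~\ref{mulgamma}.
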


\begin{cor}\label{mul} Let $T_t$ be a standard semigroup
satisfying $\Gamma^2\geq0$. Then
 \[ \|M_{a,n}(f)\|_{BMO^c(\P)}\leq c_nc_a\|f\|_{BMO^c(\P)}\pl .\]
\end{cor}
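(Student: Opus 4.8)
The plan is to deduce Corollary~\ref{mul} from Theorem~\ref{mul1} by transferring the estimate from the intrinsic $\hat\Gamma$-norm to the $BMO^c(\P)$-norm via the equivalence of norms established in the previous section. First I would observe that since $\widehat{\Gamma}=\Gamma+\Gamma_{\partial_t^2}$, the two assertions of Theorem~\ref{mul1} combine additively: for a standard semigroup satisfying $\Gamma^2\geq0$ we get, by the $BMO^c(\partial)$-estimate (which holds unconditionally) together with the $BMO^c(\Gamma)$-estimate,
\[
\|M_{a,n}(f)\|_{BMO^c(\hat\Gamma)}\leq c_nc_a\,\|f\|_{BMO^c(\hat\Gamma)}.
\]
Here one uses that the squared $\hat\Gamma$-norm is, up to constants, the sum of the squared $\partial$- and $\Gamma$-norms (both are suprema over $t$ of norms of positive operators $P_t\int_0^t(\cdot)(P_sf,P_sf)\,s\,ds$), so a bound on each summand gives a bound on the sum.

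Next I would invoke Theorem~\ref{last}, which asserts that under the hypothesis $\Gamma^2\geq0$ the three norms $\|\cdot\|_{BMO^c(\P)}$, $\|\cdot\|_{bmo^c(\P)}$ and $\|\cdot\|_{BMO^c(\hat\Gamma)}$ are all equivalent on $\N\cup L_2(\N)$, with explicit (dimension-free) constants. Composing the three bounded maps
\[
\bigl(BMO^c(\P),\|\cdot\|_{BMO^c(\P)}\bigr)\ \hookrightarrow\ \bigl(BMO^c(\hat\Gamma)\bigr)\ \xrightarrow{\ M_{a,n}\ }\ \bigl(BMO^c(\hat\Gamma)\bigr)\ \hookrightarrow\ \bigl(BMO^c(\P)\bigr)
\]
yields $\|M_{a,n}(f)\|_{BMO^c(\P)}\leq c_nc_a\,\|f\|_{BMO^c(\P)}$, which is exactly the claim. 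One small point to address is that $M_{a,n}$ must first be shown to map $\N\cup L_2(\N)$ into a space on which all these equivalences are available; this is routine since $M_{a,n}f$ is defined by an absolutely convergent integral of bounded operators applied to $f$, using \eqref{ps-form} and the estimate $\int_0^\infty|d\mu_n(\tau)|\leq c_n$ from Lemma~\ref{convex gamma}.

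The only genuine obstacle is the bookkeeping in the first step: one must check that the $BMO^c(\hat\Gamma)$-estimate really does follow from the two estimates of Theorem~\ref{mul1} rather than requiring an independent argument. The subtlety is that Theorem~\ref{mul1} as stated gives the two bounds separately, and $\|f\|_{BMO^c(\hat\Gamma)}^2\simeq\|f\|_{BMO^c(\partial)}^2+\|f\|_{BMO^c(\Gamma)}^2$ only holds because $\sup_t\|P_t\int_0^t(X+Y)(P_sf,P_sf)s\,ds\|\leq \sup_t\|P_t\int_0^t X\,s\,ds\|+\sup_t\|P_t\int_0^t Y\,s\,ds\|$ for positive operator-valued $X=\Gamma$, $Y=\Gamma_{\partial_t^2}$, together with the reverse inequality up to a factor $2$; this is elementary but should be spelled out. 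Everything else is a direct quotation of Theorem~\ref{mul1} and Theorem~\ref{last}, so once this reduction is recorded the corollary follows immediately.
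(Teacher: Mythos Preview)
Your proposal is correct and follows essentially the same route as the paper: the paper's proof is simply to observe that Theorem~\ref{last} gives $\|\cdot\|_{BMO^c(\P)}\simeq\|\cdot\|_{BMO^c(\hat\Gamma)}$, and that since $\hat\Gamma=\Gamma+\Gamma_{\partial_t^2}$ one has $\|\cdot\|_{BMO^c(\hat\Gamma)}\simeq\max\{\|\cdot\|_{BMO^c(\Gamma)},\|\cdot\|_{BMO^c(\partial)}\}$, so the two bounds of Theorem~\ref{mul1} yield the result. Your version spells out the same reduction with a bit more care about the sum-versus-max bookkeeping, which is fine.
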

\begin{proof}
By Theorem \ref{last}, we know
\[
\|f\|_{BMO^c(\P)}\simeq \|f\|_{BMO^c(\hat{\Gamma})}.
\]
By the definition of $\hat{\Gamma}$, we see that
$$\|f\|_{BMO^c(\hat{\Gamma})}\simeq\max\{\|f\|_{BMO^c({\Gamma})},\|f\|_{BMO^c(\partial)}\}.$$
Therefore, Corollary \ref{mul} follows from Theorem \ref{mul1}.
\end{proof}

\begin{exam}\label{stein}Let $-\phi $ be a real valued, symmetric, conditionally
negative function on a discrete group $G$ satisfying $\phi (1)=0.$
Let $A$ be the unbounded operator defined on $\C[G]$ as
$$A(\lambda(g))=\phi(g)\lambda(g).$$ Let $T_t=\exp(-tA)$, i.e.
$$T_t(\lambda(g))=\exp(-t\phi(g))\lambda(g).$$
  $(T_t)_t$ extends to a standard semigroup of operators with generator $-A$ on the group von Neumann algebra ${\mathcal N}=VN(G)$ following Schoenberg's theorem. $T_t$  satisfies $\Gamma^2\geq 0$ too. Therefore,
  Theorem \ref{mul1} and Corollary \ref{mul} applies
in this setting. Here we note that $M_{a}$ is indeed a Fourier
multiplier. Indeed, assume that $m$ is a complex valued function of
the form
\[ m(g) \lel c\int_0^\infty \phi^{\frac 12}(g)e^{-t\phi ^{\frac
 12}(g)}a(t)dt \pl.
 \]
Then $M_{a}(\la(g))\lel m(g)\la(g)$. For example we may consider
$a(t)=t^{-2is}$ with $s$ a real number. Then we deduce that $
m(g)=\Gamma(1-is)[\phi (g)]^{is}$ is a Fourier multiplier. Note the
subordinated semigroup in this case is given by
 \[ P_t(\la(g))\lel e^{-t\sqrt{\psi}(g)}\la(g) \pl .\]
Therefore Corollary \ref{mul} imply that
 \begin{equation}\label {stein1}
 \|M_a(f)\|_{BMO^c(\P)}\leq cc_a\|f\|_{BMO^c(\P)}\pl .
 \end{equation}
for all $f\in L_2(VN(G))$.
\end{exam}



In the remaining  part of this article, we will use probabilistic
methods to prove an interpolation theorem for semigroup  BMO spaces.
This in turn allows us to obtain $L_p$ bounds for Fourier
multipliers of the form above.

\section{Probabilistic models for semigroup of operators}
In the section, we introduce BMO spaces for noncommutative martingales and P. A. Meyer's probabilistic model for semigroup of operators. We will apply them in the next section to an interpolation theorem for BMO associated with semigroups.

\subsection{Noncommutative martingales} ${\atop}$

Let $(\M,\tau)$ be a semifinite von Neumann algebra equipped with a
semifinite normal faithful trace $\tau$. We will say that an
increasing family $(\M_t)_{t\gl 0}$ is an \emph{increasing
filtration} if if $s<t$ implies $\M_s\subset \M_t$, $\bigcup_t \M_t$
is weakly dense, and the restriction of the trace is semifinite and
faithful for every $\M_t$.  We refer to \cite{Tak} for the fact that
this implies the existence of a uniquely determined trace preserving
conditional expectations $E_t:\M\to \M_t$. By uniqueness we have
$E_{s}E_t=E_{\min(s,t)}$. Right continuity, i.e. $\bigcap_{s>t}
\M_s=\M_t$ for all $t\gl 0$, will be part of the assumption when we
talk about increasing filtrations. Similarly, we will say that
$(\M_t)_{t\gl 0}$ is a \emph{decreasing filtration} if $s<t$ implies
$\M_s\supset \M_t$, $\M$ is the weak closure of $\bigcup_t \M_t$,
and we have left continuity. Again we have a family of conditional
expectations $E_s:\M\to \M_s$ such that $E_{s}E_t=E_{\max(s,t)}$. We
have $\M_0={\M}, E_0=id$ for decreasing filtration and set
$\M_\infty =\wedge_t\M_t$ as a convention. Set $\M_\infty=\M,
E_\infty=id,\M_0=\wedge_t\M_t$ for increasing filtration. A
(reversed) martingale adapted to $({\M}_t)_{t\in[0,\infty)}$ is a
family $(x_t)\in L^1(\M)+L^\infty(\M)$ such that $E_t(x_s-x_t)=0$
for any $s>t\geq0$ for increasing filtration ( for $t>s\geq0$ for
decreasing filtration).

For $x\in L_p(\M), 1\leq p\leq \infty$, the family $(x_t)$ given by
$x_t=E_tx$ is a martingale with respect to $\M_t$. For $2<p\leq
\infty$, we define
\smallskip\begin{center}
$\|x\|_{L^c_pmo(\M)} \lel
 \|\sup^+_t E_{t}(|x-E_tx|^2)\|_{{\frac p2}}^{\frac12},
  $\end{center} \smallskip
\noindent where $\|\sup^+\cdot\|_{\frac p2}$ should be understood in
the sense of vector-valued noncommutative $L_p$ spaces, see
\cite{pvp}, \cite{JD}, \cite{JX3}). Let
$$\|x\|_{L_pmo(\M)}=\max\{\|x\|_{L^c_pmo(\M)},\|x^*\|_{L^c_pmo(\M)}\}.$$
By Doob's inequality, we know  that
\begin{eqnarray}\label{Doob}
\|x\|_{L_pmo(\M)}\leq c_p\|x\|_{L_p(\M)}.
\end{eqnarray}
Let $L_p^0({\mathcal M}), 1\leq p\leq\infty$, be the quotient space
of $L_p(\M)$ by $\{x,x=\E x\}$. Here $\E$ is the projection from
$\M$ onto $\wedge_t\M_t$, which equals to $E_0$ in the case of
increasing filtration and equals to $E_\infty$ in the case of
decreasing filtration. For $2<p<\infty$, let $L_p^0mo(\M)$
($L_p^{c,0}mo(\M)$) be the completion of $\M^0=L^0_\infty(\M)$ by
$\|\cdot\|_{L_pmo(\M)}$ ($\|\cdot\|_{L_p^cmo(\M)}$)-norm. For
$p=\infty$, we have to consider a weak$^*$ completion and denote the
completed spaces by $bmo^{c}(\M)$ (resp. $bmo(\M)$). We refer the
interested readers to \cite{JKPX} and \cite{JuPe} for more
information on noncommutative martingales with continuous
filtrations.

We now introduce martingale $h_q$-space, which are preduals of
$L_pmo'$s. Let $\si=\{0=s_0<s_1,...s_{n-1}<s_n=\infty\}$ be a finite
partition of $[0,\infty]$. For $x\in L^1(\M)+L^\infty(\M)$, define
the conditioned bracket $\langle x,x\rangle(\si)$ ($k\leq n$) as
  $$\langle x,x\rangle(\si)=\sum_{j=1}^n E_{s_{j-1}}|E_{s_{j}}x-E_{s_{j-1}}x|^2.$$
   The $h_p^{c}(\si), 1\leq p <\infty, $-norm of $x$ is defined as
$$\|x\|_{h_p^c}=\|(\langle x,x\rangle(\si))^{\frac12}\|_{L_p}.$$
Let $\U$ be an ultrafilter refining the natural order given by
inclusion on the set of all partitions of $[0,\infty]$. The
$h_p^{c}(\U)$ and $h_p^r(\U)$-norms of $x$ are defined as
\begin{eqnarray*}
\|x\|_{h_p^c}&=&\lim_{\si,\U}\|(\langle
x,x\rangle(\si))^{\frac12}\|_{L_p}, \quad
\|x\|_{h_p^r} \pl =\pl \|x^*\|_{h_p^c}.
\end{eqnarray*}
 The $h_p^d(\U)$-norm of $x$ is
defined as
$$\|x\|_{h_p^d}=\lim_{\si,\U}(\sum_{s_j\in \si}\|E_{s_j}x-E_{s_{j-1}}x\|^p_{L_p})^\frac1p.$$
 It is proved in \cite{JuPe}
that these norms do not depend on the choice of $\U$ whenever $\U$
is containing the filter base of tails. Let $h_{p}^{c}(\M)$
($h_{p}^{r}(\M)$, $h_{p}^{d}(\M)$ ) be the collection of all $x$
with finite $h_p^{c}(\U)$ ($h_{p}^{r}(\M)$, $h_{p}^{d}(\U)$ )-norm.
It is proved in \cite{JuPe} that
\begin{eqnarray*}
(h_{p}^{c}(\M))^*=L_q^cmo(\M)=h_q^c(\M), 1\leq p<2, \frac1p+\frac1q=1\\
h_{p}^c(\M)+h_p^r(\M)+h_p^d(\M)=L_p(\M), 1<p<2.
\end{eqnarray*}
Denote by $h_p(\M)=h_p^c(\M)+h^r_p(\M)$, $H_p(\M)=h_p(\M)+h_p^d(\M),
1\leq p<2$, $h_p(\M)=h_p^c(\M)\cap h^r_p(\M)$ for $2\leq p<\infty,$
and $BMO(\M)=(H_1(\M))^*=bmo(\M)\cap (h_1^d(\M))^*$, we have
$$[BMO(\M),L_1(\M)]_{\frac1q}=L_q(\M),$$ for all $1<q<\infty$.

Recall that a martingale $x=(x_t)_t$ has \emph{a.u.\!\! continuous }
path provided that, for every $T>0, \eps>0$ there exists a
projection $e$ with $\tau(1-e)<\eps$ such that the function
$f_e:[0,T]\to {\mathcal M}$ given by
 \[ f_e(t) \lel x_te \in {\mathcal M} \]
is continuous. The following observation will be crucial for us.

\begin{lemma}\label{ctmar} Let $x^{\la}$ be a net
of martingales in $\M\cap L_2(\M)$ with a. u. \! continuous path.
Suppose $x^{\la}$ weakly converges in $L_2(\M)$ and the limit $x$ is
in $bmo$. Then $x\in BMO$ and
 \[ \|x\|_{BMO}\kl c\|x\|_{bmo} \pl .\]
Moreover, let $p>2$ and $x\in L_p(\M)$ with a.u. \!continuous path. Then $\|x\|_{h_p}\simeq_{c_p} \|x\|_{L_p(\M)}$.
\end{lemma}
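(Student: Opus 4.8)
The plan is to prove the two assertions of Lemma \ref{ctmar} separately, with the first one being the substantial part. For the first assertion, the issue is that $bmo(\M)$ is defined via a weak$^*$ completion and only controls the ``column square function at the terminal time'' $\sup^+_t E_t|x-E_tx|^2$, while $BMO(\M)=bmo(\M)\cap(h_1^d(\M))^*$ additionally requires control of the diagonal part, i.e.\ of the dual norm against $h_1^d(\M)$. The key point is that a martingale with a.u.\ continuous path has \emph{no jumps}, so its diagonal (purely discontinuous) part vanishes; hence on such martingales the $h_1^d$-dual seminorm should be dominated by the column (and row) $bmo$ seminorms. Concretely, I would fix a partition $\si=\{0=s_0<\dots<s_n=\infty\}$ and estimate $\sum_j \|E_{s_j}x-E_{s_{j-1}}x\|_{L_\infty}$-type pairings against an element of the unit ball of $h_1^d(\si)$; using a.u.\ continuity one chooses a projection $e$ with $\tau(1-e)$ small on which $t\mapsto x_te$ is continuous, writes each increment $E_{s_j}x-E_{s_{j-1}}x$ as a limit of telescoping sums over a refinement, and pushes the estimate into the column square function by Stein/Doob-type inequalities. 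Passing to the limit along the ultrafilter $\U$ then gives $\|x\|_{(h_1^d)^*}\kl c\,\|x\|_{bmo}$, and combined with the definition $BMO=bmo\cap(h_1^d)^*$ this yields $\|x\|_{BMO}\kl c\|x\|_{bmo}$. The hypothesis that $x$ is a weak $L_2$-limit of the net $x^\la$ of continuous-path martingales in $\M\cap L_2(\M)$ is what legitimizes these manipulations: one first proves the estimate for each $x^\la$ (where everything is a genuine $L_2$-martingale and the square-function identities are literal), uniformly in $\la$, and then takes the weak limit, using weak lower semicontinuity of the $BMO$-norm on $L_2$.

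For the second assertion, let $p>2$ and $x\in L_p(\M)$ with a.u.\ continuous path. The inequality $\|x\|_{h_p}\kl c_p\|x\|_{L_p(\M)}$ is the continuous-filtration Burkholder--Gundy / square-function inequality, valid for \emph{all} $x\in L_p(\M)$ (this is part of the martingale theory recalled above: $h_p^c\cap h_p^r=h_p$ embeds in $L_p$ for $p\ge 2$ with the reverse inequality from $h_p^c+h_p^r+h_p^d=L_p$ for $1<p<2$ by duality). For the reverse inequality $\|x\|_{L_p(\M)}\kl c_p\|x\|_{h_p}$, the role of a.u.\ continuity is again to kill the diagonal term: by duality $H_p(\M)=h_p^c+h_p^r+h_p^d$ for the conjugate exponent, so in general one only gets $\|x\|_{L_p}\kl c_p(\|x\|_{h_p}+\|x\|_{h_p^d})$, and one must show $\|x\|_{h_p^d}\kl c_p\|x\|_{h_p}$ for continuous-path $x$. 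This is the same mechanism as in the first part: for a partition $\si$, $\|x\|_{h_p^d(\si)}^p=\sum_j\|E_{s_j}x-E_{s_{j-1}}x\|_{L_p}^p$, and a.u.\ continuity forces the maximal increment to go to zero (after cutting by a small projection), so that along the ultrafilter the diagonal contribution is absorbed into the column/row square functions via an interpolation between the $L_2$ and $L_\infty$ estimates.

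The main obstacle I anticipate is making the ``a.u.\ continuous path $\Rightarrow$ diagonal part negligible'' step rigorous in the \emph{continuous-filtration, noncommutative} setting: one cannot simply sum jumps, and the definitions of $h_p^d(\U)$ and $bmo\cap(h_1^d)^*$ are taken as ultrafilter limits over partitions, so one needs a uniform (in the partition) way to bound the diagonal term by the column term using only the equicontinuity afforded by the cutting projection $e$. I would handle this by first establishing the estimate quantitatively for the continuous-path $L_2$-martingales $x^\la$ — where Stein's inequality and the continuous-time square-function identities (as in Lemma \ref{lemma}-style telescoping) apply directly — obtaining a constant $c$ independent of $\la$ and of $\si$, and only then invoke the weak-$L_2$ convergence and lower semicontinuity to transfer the bound to $x$. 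The references \cite{JuPe}, \cite{JKPX} for continuous martingale Hardy spaces and \cite{JX3} for the vector-valued $\sup^+$ machinery supply the technical backbone.
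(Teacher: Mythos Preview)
Your treatment of the second assertion is essentially the paper's: one shows $\lim_{\si,\U}\|x\|_{h_q^d(\si)}=0$ for a.u.\ continuous $x\in L_q$, $q>2$, via the interpolation inequality
\[
\|d_{t_j}(x)\|_{L_q(\ell_q)}\kl \|d_{t_j}(x)\|_{L_q(\ell_\infty^c)}^{1-\frac2q}\,\|x\|_{H_q^c(\si)}^{\frac2q},
\]
together with the fact that a.u.\ continuity (plus Doob) forces $\|d_{t_j}(x)\|_{L_q(\ell_\infty^c)}\to 0$ along the ultrafilter. Since $H_q^c=h_q^c\cap h_q^d$ for $q>2$, this gives $\|x\|_{H_q^c}\kl C\|x\|_{h_q^c}$.

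The first assertion, however, has a genuine gap. Your plan is to prove $\|x^\la\|_{(h_1^d)^*}\kl c\,\|x^\la\|_{bmo}$ uniformly in $\la$ and then pass to the limit by weak lower semicontinuity of the $BMO$-norm. But the hypotheses do \emph{not} give $\sup_\la\|x^\la\|_{bmo}<\infty$ (nor even $\sup_\la\|x^\la\|_\M<\infty$): all you are told is that $x^\la\in\M\cap L_2(\M)$ has a.u.\ continuous path and converges \emph{weakly in $L_2$} to some $x\in bmo$. So a uniform estimate of the form $\|x^\la\|_{BMO}\kl c\|x^\la\|_{bmo}$ is useless for transferring anything to $x$, and lower semicontinuity cannot be invoked.

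The paper's argument avoids this entirely by proving the much sharper fact that for each $\la$ one has $\|x^\la\|_{(h_1^d)^*}=0$, not merely a bound. Concretely: Doob's inequality plus a.u.\ continuity give a factorization $x^\la_t=f(t)a$ with $f$ norm-continuous and $a\in L_q$, hence $\lim_{\si,\U}\|d_{t_j}(x^\la)\|_{L_q(\ell_\infty^c)}=0$; the interpolation inequality above then kills $\|d_{t_j}(x^\la)\|_{L_q(\ell_q)}$ as well, and since $\bigcup_{p>1}B_{h_p^{1_c}+h_p^{1_r}}$ is dense in the unit ball of $h_1^d$ (from \cite{JuPe}), the functional induced by $x^\la$ on $h_1^d$ vanishes. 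Now for $y\in H_1^c\cap L_2$ one decomposes $y=y_1+y_2$ with $y_1\in h_1^c\cap L_2$, $y_2\in h_1^d\cap L_2$, and uses only the weak $L_2$ convergence to compute
\[
\tau(y_2^*x)\lel \lim_\la \tau(y_2^*x^\la)\lel 0,
\]
so that $|\tau(y^*x)|=|\tau(y_1^*x)|\kl c\|x\|_{bmo}\|y_1\|_{h_1^c}$. This is where the net $x^\la$ actually enters: not through a uniform norm bound, but through the vanishing of each pairing with $h_1^d\cap L_2$. Your ``push the increment into the column square function'' step is too vague to produce the zero; the operative duality is between $h_1^d$ and the $L_q(\ell_\infty^c)$-type norms, not the $bmo$ square function.
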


\newcommand{\au}{a.\! u.\! }

\begin{proof} We first prove that for martingales $x\in L_2(\M)\cap\M$ with a. u.\! continuous
path, we have
\begin{eqnarray}\|x\|_{(h_1^d(\M))^*}=0.\label{h1d*}\end{eqnarray} By Doob's inequality for noncommutative martingales,
one can show that  \au continuity and $x\in L_2(\M)\cap \M$ imply
that $x_t=f(t)a$ for some $a\in L_q(\M)$ and a continuous function
$f:[0,T]\to \M$ for any $q>2, T<\infty$. This implies that
 \[ \lim_{\si,\U}
 \|d_{t_j}(x)\|_{L_q(\ell_\infty^c)} = \lim_{\si,\U}
 \|{\sup}^+_{t_j\in \si}d_{t_j}(x)^*d_{t_j}(x)\|_{q/2} \lel 0 \pl
 ,\] for any ultrafilter $\U$ of $[0,T]$ containing the filter base of tails.
 Note that
 \begin{eqnarray}\label{hqd}  \|d_{t_j}((x)^*)\|_{L_q(\ell_\infty^c)}
 \kl   \|d_{t_j}(x)\|_{L_q(\ell_q)}
 \kl \|d_{t_j}(x)\|_{L_q(\ell_{\infty}^c)}^{1-\theta}
  \|x\|_{H_q^c(\si)}^{\theta} \end{eqnarray}
for $\theta=\frac2q$. Thus we also find that
 \[ \lim_{\si,\U}
 \|d_{t_j}(x)\|_{L_q(\ell_\infty^r)} \lel 0 \pl .\]
We recall from \cite{JuPe}  that $\bigcup_{p>1}
B_{h_{p}^{1_c}+h_{p}^{1_r}}\subset h_1^d$ are dense in the unit ball
of $h_1^d$.  Here the $h_{p}^{1_c}$ is defined such that the norm of
$x\in L_2(\M)\cap(h_{p}^{1_c})^*$ is given by $\lim_{\si,\U}
 \|d_{t_j}(x)\|_{L_q(\ell_\infty^c)}.$
Therefore, $x$ satisfies (\ref{h1d*}) if $x$ is in $\M\cap L_2(\M)$ and has \au continuous path. Now,  let $x^{\la}$ be a net
of weakly $L_2$-converging martingales in $\M\cap L_2(\M)$ with a.
u. \! continuous path. Suppose its weak $L_2$-limit $x$ is in $bmo$.
Recall from \cite{JuPe} that, for any $y\in H_1^c\cap L_2(\M) $ we
may find a decomposition such that $y=y_1+y_2$ with $y_1\in
h_1^c\cap L_2(\M), y_2\in h_1^d\cap L_2(\M)$ and
$\|y_1\|_{h_1^c}+\|y_2\|_{h_1^d}\leq 2\|y\|_{H_1^c}$. Then
 \begin{align*}
 |\tau(y^*x)|&\le  |\tau(y_1^*x)|+
 |\tau(y_2^*x)|
  \lel |\tau(y_1^*x)|+|\lim_{\la} \tau(y_2^*x^{\la})| \lel |\tau(y_1^*x)|\leq c\|x\|_{bmo}\|y_1\|_{h_1^c} \pl .
  \end{align*}
  Since  the unit ball of $H_1^c(\M)\cap L_2(\M)$ in dense in the unit
  ball of $H_1^c(\M)$, we get
  $$\|x\|_{bmo_c}\leq c\|x\|_{BMO_c}.$$
From (\ref{hqd}) we have already seen  that for martingales $x\in
L_q(\M)$ with continuous path we have $\lim_{\si,\U}
\|x\|_{h_q^d(\si)}=0$ because
$\|x\|_{h_q^d(\si)}=\|d_{t_j}(x)\|_{L_q(\ell_q)}$. Hence  we have
 \[ \|x\|_{H_q^c} \kl C \|x\|_{h_q^c}  \]
for $q>2$ because $H_q^c=h_q^c\cap h_q^d$ for $q>2$.  \qd

In the previous argument we learned
for continuous martingales with a.u.\!\! continuous path we have $\|x\|_{h_p^d}=0$ (see also \cite{JKPX}). In fact, in this paper we might simply take this as a definition. We will need some more results in this direction and state them in the following lemma.

\begin{lemma}\label{JuPe} Let $1<p<\infty$. We have
\begin{eqnarray}
(BMO(\M),L_1(\M))_{\frac1p}=L_p(\M),\\
(bmo^c(\M),L_2(\M))_{\frac2p}=h_p^c(\M)
\end{eqnarray}
with equivalence constants $\simeq p$. Suppose that $x\in L_p(\M),
2<p<\infty$ and $(E_tx)_t$ is a.u. continuous. We have
\begin{eqnarray}
\|x\|_{L_pmo(\M)}+\|\E x\|_{L_p(\M)}\simeq\|x\|_{L_p(\M)},\label{eq}
\end{eqnarray} with
equivalence constants $\simeq p$ for $p>4$.
\end{lemma}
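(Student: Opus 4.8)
The plan is to derive all three statements from results already in the literature (\cite{JuPe}, \cite{JKPX}) together with Lemma \ref{ctmar}. For the first equality $(BMO(\M),L_1(\M))_{\frac1p}=L_p(\M)$ with constant $\simeq p$: this is the complex interpolation identity between the martingale $BMO$ space $BMO(\M)=bmo(\M)\cap(h_1^d(\M))^*$ and $L_1(\M)$ established in \cite{JuPe}; the only point to check is the linear growth of the constant in $p$, which comes from tracking the constants in the Rademacher-type and duality arguments there (equivalently, from the fact that the martingale Fefferman--Stein / John--Nirenberg constants grow linearly). For the second equality $(bmo^c(\M),L_2(\M))_{\frac2p}=h_p^c(\M)$ with constant $\simeq p$: I would use the duality $(h_1^c(\M))^*=bmo^c(\M)$ together with $(h_2^c(\M),h_2^c(\M))=L_2(\M)$ and bilinear complex interpolation (the column case is genuinely easier than the full case, since no diagonal part appears); again one must track constants, and the $\simeq p$ factor is standard for column $h_p$ spaces.

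For the quantitative equivalence \eqref{eq} I would argue as follows. First, the direction $\|x\|_{L_pmo(\M)}+\|\E x\|_{L_p(\M)}\lesssim p\,\|x\|_{L_p(\M)}$ is immediate from Doob's inequality \eqref{Doob} (whose constant is $\simeq p$) for the first summand and from contractivity of $\E$ for the second. For the reverse direction, I would decompose $x-\E x$ using the continuous-filtration Davis-type decomposition $L_p(\M)=h_p^c(\M)+h_p^r(\M)+h_p^d(\M)$ recalled before Lemma \ref{ctmar}; the hypothesis that $(E_tx)_t$ has a.u.\ continuous path forces the diagonal term to vanish, by the computation $\lim_{\si,\U}\|x\|_{h_q^d(\si)}=\lim_{\si,\U}\|d_{t_j}(x)\|_{L_q(\ell_q)}=0$ already isolated in the proof of Lemma \ref{ctmar} (see \eqref{hqd}). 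So it suffices to bound $\|x-\E x\|_{h_p^c(\M)}+\|x-\E x\|_{h_p^r(\M)}$ by $\|x\|_{L_pmo(\M)}$. This in turn follows by interpolating the endpoint bounds $h_2^c(\M)=L_2^0(\M)$ isometrically (where $\|x\|_{L_2mo}\simeq\|x-\E x\|_2$) and $h_\infty^c(\M)=bmo^c(\M)$ by definition, using the second displayed interpolation identity of this very lemma; the restriction $p>4$ appears precisely because one applies the interpolation scale $(bmo^c,L_2)_{2/p}$ to $x-\E x$ and then needs $p/2>2$ for the diagonal-vanishing argument \eqref{hqd} (with $q=p/2$, hence $\theta=2/q=4/p<1$) to go through with a bounded constant.

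The main obstacle is the bookkeeping of constants: none of the underlying facts is new, but the statement $\simeq p$ (and $p>4$ for \eqref{eq}) is a sharpened, quantitative form of what is stated qualitatively in \cite{JuPe}, so the real work is re-running those interpolation and duality arguments while keeping every implied constant linear in $p$. A secondary subtlety is making the Davis decomposition interact correctly with the a.u.\ continuity hypothesis — one must verify that the $h_p^c$ and $h_p^r$ parts of the decomposition of a path-continuous martingale can again be chosen to have (or to be limits of) path-continuous representatives, so that Lemma \ref{ctmar} and the vanishing of the diagonal part apply coherently; this is where invoking the density statements from \cite{JuPe} ($\bigcup_{p>1}B_{h_p^{1_c}+h_p^{1_r}}$ dense in the unit ball of $h_1^d$) is essential. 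Once these points are settled, \eqref{eq} follows by combining the upper and lower bounds, and the proof is complete.
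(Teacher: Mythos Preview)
The paper does not prove this lemma at all: it is stated (with the telling label \texttt{JuPe}) as a result imported from \cite{JuPe}, introduced by the sentence ``We will need some more results in this direction and state them in the following lemma,'' and followed immediately by the definition of Markov dilations with no proof in between. So there is no argument in the paper to compare against; parts (1) and (2) are taken directly from \cite{JuPe}, and part (3) is implicitly the combination of those interpolation facts with the identification $L_q^c mo(\M)=h_q^c(\M)$ for $q>2$ (recorded just before Lemma~\ref{ctmar}) and the vanishing of $h_p^d$ for a.u.\ continuous martingales (proved inside Lemma~\ref{ctmar}). Your overall plan---cite \cite{JuPe} for the interpolation identities and assemble (3) from the diagonal-vanishing plus $h_p^c=L_p^c mo$---is therefore exactly what the paper has in mind.

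That said, your explanation of the threshold $p>4$ is not right. You attribute it to applying \eqref{hqd} with $q=p/2$, but the diagonal-vanishing argument in Lemma~\ref{ctmar} applies \eqref{hqd} with $q=p$ (the differences $d_{t_j}(x)$ are measured in $L_p(\ell_p)$, not $L_{p/2}$), and for that one only needs $p>2$ to get $\theta=2/p<1$. The restriction $p>4$ enters through the \emph{upper} bound: to control $\|\sup_t^+ E_t(|x-E_tx|^2)\|_{p/2}$ by $\|x\|_p^2$ one uses $E_t(|x-E_tx|^2)\le E_t(|x|^2)$ and then Junge's noncommutative Doob inequality in $L_{p/2}$, whose constant behaves like $(p/(p-2))^2$ and is uniformly bounded precisely for $p$ bounded away from $2$, e.g.\ $p>4$. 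The linear growth $\simeq p$ in the equivalence then comes from the lower bound via Burkholder--Gundy. Separately, your proposed interpolation shortcut to get $\|y\|_{h_p^c}\le C\|y\|_{L_p^c mo}$ is unnecessary and somewhat circular: the paper already records $L_p^c mo=h_p^c$ for $p>2$ as a known duality fact, so one should simply invoke that identification and track its constant rather than rederive it from part (2) of the same lemma.
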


We say that a standard semigroup $(T_t)$ on a semifinite von Neumann
algebra $\N$ admits  {\it a standard Markov dilation} if there
exists a larger semifinite von Neumann algebra $\M$, an increasing
filtration $(\M_{s]})_{s\gl 0}$ and  trace preserving
$^*$-homomorphism $\pi_s$ such that
  \[ E_{s]}(\pi_t(x))\lel \pi_s(T_{t-s}x)  \quad s<t\pl ,\pl
  x\in {\mathcal \N} \pl .\]
We say that $(T_t)$ admits  {\it a reversed Markov dilation} if
there exists a larger von Neumann algebra $\M$, a decreasing
filtration $(\M_{[s})_{s\gl 0}$, and trace preserving
$^*$-homomorphisms $\pi_s:\N\to \M_{[s}$ such that
  \[ E_{[s}(\pi_t(x))\lel \pi_s(T_{s-t}x)  \quad t<s\pl, \pl x\in {\mathcal
  N}\pl . \]
We say that $(T_t)$ admits  a {\it Markov dilation} if it admits
either a standard dilation or a reversed dilation. We refer to
\cite{Ku} for related questions. A glance at (\ref{ps-form}) shows
that a Markov dilation for $(T_t)$ implies that the $P_t$'s are
factorable (in the sense of \cite{claire}). According to
\cite{claire}, we know that a Markov dilation for $T_t$ (standard or
reversed) yields a Markov dilation (standard and reversed) for
$P_t$.

In the noncommutative setting the existence of a Markov dilation is
no longer for free, as it is in the commutative case. We refer the
reader to \cite{Ri} for its existence for group von Neumann algebra
and to \cite{JRS} for its existence for finite von Neumann algebra.
However, the existence of a Markov dilation allows us to use
probabilistic tools for semigroups of operators. In particular,
given a a reversed Markov dilation we know that
$m(x)=(m_s(x))_{s\geq0}$ with
\begin{eqnarray}
 m_s(x)=\pi_s(T_sx), \label{ms} \end{eqnarray}
is a martingale with respect to the reversed filtration $(\M_{[s})$.
A standard Markov dilation implies that, for any $v>0$,
$m(x)=(m_s(x))_{v\geq s\geq0}$ with
\begin{eqnarray}
m_s(x)=\pi_s(T_{v-s}x)\end{eqnarray} is a martingale with respect to
the standard filtration  $(\M_{s]})$.

\begin{prop}\label{bmo1}  Let $(T_t)$ be a standard semigroup of operators on $\N$ with reversed
Markov dilation $(\pi_t,\M_t)$. Let $x\in L_p(\N)$. Then
$E_{[s}(\pi_0x)=\pi_sT_s x$ and
\begin{eqnarray}
\|\pi_0(x)\|_{L_p^cmo(\M)}=\|{\sup}_t^+\pi_t(T_t|x|^2-|T_tx|^2)\|_{\frac
p2}^{\frac12}, \label{mxbmo}
\end{eqnarray}
for $2<p\leq\infty$.
In particular,
\begin{eqnarray}
\|\pi_0(x)\|_{bmo^c(\M)}=\|x\|_{bmo^c(\T)}.\label{rmartingale}
\end{eqnarray}
\end{prop}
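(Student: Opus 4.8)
The plan is to first establish the identity $E_{[s}(\pi_0 x) = \pi_s T_s x$ directly from the definition of a reversed Markov dilation, then use it to compute the conditioned increments of the martingale $(\pi_s(T_s x))_s$, and finally translate this into the asserted formula for the $L_p^c mo(\M)$-norm. First I would recall that by definition of the reversed Markov dilation, $E_{[s}(\pi_t(y)) = \pi_s(T_{s-t}y)$ for $t<s$ and $y\in\N$; taking $t=0$ and $y=x$ gives $E_{[s}(\pi_0 x) = \pi_s(T_s x)$ as claimed. Since $\pi_0$ is a trace-preserving $^*$-homomorphism, $\|\pi_0 x - E_{[s}(\pi_0 x)\|$-type quantities can be expanded using that $\pi_s$ is a $^*$-homomorphism and $E_{[s}$ is a conditional expectation.

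The key computation is: in the reversed filtration, the ``martingale differences toward the tail'' relevant to $L_p^c mo$ are $\pi_0 x - E_{[s}(\pi_0 x) = \pi_0 x - \pi_s(T_s x)$, and we need
\[
E_{[s}\bigl(|\pi_0 x - E_{[s}(\pi_0 x)|^2\bigr) = \pi_s\bigl(T_s|x|^2 - |T_s x|^2\bigr).
\]
To see this, expand $|\pi_0 x - \pi_s(T_s x)|^2 = |\pi_0 x|^2 - (\pi_0 x)^*\pi_s(T_s x) - \pi_s(T_s x)^*(\pi_0 x) + |\pi_s(T_s x)|^2$. Apply $E_{[s}$: since $\pi_s(T_s x)\in\M_{[s}$ it pulls out of the conditional expectation, and $E_{[s}((\pi_0 x)^*\pi_s(T_s x)) = E_{[s}((\pi_0 x)^*)\pi_s(T_s x) = \pi_s(T_s x)^*\pi_s(T_s x) = \pi_s(|T_s x|^2)$ using that $\pi_s$ is a $^*$-homomorphism and the already-proven identity applied to $x^*$. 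Also $E_{[s}(|\pi_0 x|^2) = E_{[s}(\pi_0(|x|^2)) = \pi_s(T_s|x|^2)$. Combining, the cross terms and the $|\pi_s(T_s x)|^2$ term collapse, leaving exactly $\pi_s(T_s|x|^2 - |T_s x|^2)$. Then by definition $\|\pi_0 x\|_{L_p^c mo(\M)} = \|{\sup}_s^+ E_{[s}(|\pi_0 x - E_{[s}(\pi_0 x)|^2)\|_{p/2}^{1/2} = \|{\sup}_t^+ \pi_t(T_t|x|^2 - |T_t x|^2)\|_{p/2}^{1/2}$, which is \eqref{mxbmo}. Here one must be slightly careful that the index set for the reversed filtration runs over $s\ge 0$ with $E_0 = \mathrm{id}$, so the supremum over $s$ in the martingale $L_p mo$ definition matches the supremum over $t$ on the right-hand side.

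Finally, for \eqref{rmartingale} I would simply specialize to $p=\infty$: the $bmo^c(\M)$ norm is the $p=\infty$ case of $L_p^c mo$, so $\|\pi_0(x)\|_{bmo^c(\M)} = \|{\sup}_t^+ \pi_t(T_t|x|^2-|T_t x|^2)\|^{1/2} = \sup_t \|\pi_t(T_t|x|^2 - |T_t x|^2)\|^{1/2}$, and since $\pi_t$ is a trace-preserving $^*$-homomorphism it is in particular isometric on $\N$ (indeed on each $L_q$), so $\|\pi_t(T_t|x|^2-|T_t x|^2)\| = \|T_t|x|^2 - |T_t x|^2\|$; taking the supremum over $t$ gives $\|x\|_{bmo^c(\T)}$. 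I expect the main (minor) obstacle to be bookkeeping: making sure the direction of the filtration, the role of $E_0=\mathrm{id}$, and the positivity of $T_t|x|^2-|T_t x|^2$ (needed so that the ${\sup}^+$ makes sense and the $p=\infty$ case reduces to an ordinary supremum of norms) are all handled consistently, and that the $^*$-homomorphism/conditional-expectation module property $E_{[s}(a\,b) = E_{[s}(a)\,b$ for $b\in\M_{[s}$ is invoked legitimately for elements of $L_p$ rather than just $\N$.
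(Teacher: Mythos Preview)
Your proposal is correct and follows essentially the same approach as the paper: apply the reversed dilation property to get $E_{[s}(\pi_0 x)=\pi_s T_s x$, compute $E_{[s}|\pi_0 x|^2-|E_{[s}\pi_0 x|^2=\pi_s(T_s|x|^2-|T_sx|^2)$, and then take the supremum. The paper's proof is terser (it invokes the standard identity $E(|y-Ey|^2)=E|y|^2-|Ey|^2$ directly rather than expanding the square), but the content is the same.
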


\begin{proof} 
To prove (\ref{mxbmo}), we apply the reversed dilation condition and get $E_{[t}\pi_0(x)=\pi_tT_tx$. Then
\begin{eqnarray*}
   E_{[t}|\pi_0(x)|^2-|E_{[t}\pi_0(x)|^2=\pi_t(T_t|x|^2-|T_tx|^2).
   \end{eqnarray*}
Taking the supremum over  all $t$, we obtain  the assertion. The equation
(\ref{rmartingale}) now follows immediately from the  definition.\qd

\subsection{Meyer's probabilistic model for semigroup of operators}
Meyer's probabilistic model provides another way to connect
semigroups of operators with martingales. Let us start with an
observation due to Meyer \cite{Me1}.

\begin{prop}\label{meyer1} Let
$T_t$ be a semigroup with a standard (resp. reversed) Markov
dilation $(\pi_t,\M_t)$. For $x\in \dom(A)$, let
$n(x)=(n_s(x))_{s\geq0}$ with
\begin{eqnarray}
n_s(x)=\pi_s(x)+\int_0^s\pi_v(A(x))dv, \label{ns}\end{eqnarray} for
standard Markov dilation and
\begin{eqnarray}
n_s(x)=\pi_s(x)+\int_s^\infty\pi_v(A(x))dv, \end{eqnarray} for
reversed Markov dilation. Then $n(x)$ is a (reversed) martingale
with respect to the filtration $\M_s$.
\end{prop}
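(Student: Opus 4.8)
The plan is to verify directly that $n(x)$ is adapted and that the martingale increment condition holds, using only the dilation identity $E_{s]}\pi_t(x)=\pi_s(T_{t-s}x)$ (resp. its reversed analogue) together with the fundamental theorem of calculus and the commutation of $E_{s]}$ with the Bochner integral. Consider first the standard case. Since $\pi_v(A(x))\in\M_{v]}\subset\M_{s]}$ for $v\le s$, the integral $\int_0^s\pi_v(A(x))\,dv$ lies in $\M_{s]}$, so $n_s(x)$ is adapted. Fix $s<t$. Using that $E_{s]}$ is a normal trace-preserving conditional expectation (hence bounded on $L_p$ and commuting with Bochner integration) and the dilation identity, I would compute
\[
 E_{s]}\bigl(n_t(x)-n_s(x)\bigr)
 \lel E_{s]}\bigl(\pi_t(x)-\pi_s(x)\bigr)+\int_s^t E_{s]}\pi_v(A(x))\,dv
 \lel \pi_s(T_{t-s}x)-\pi_s(x)+\int_s^t \pi_s(T_{v-s}A(x))\,dv.
\]
Now $\pi_s$ is a $^*$-homomorphism, in particular linear and continuous, so it pulls out of the integral, and it remains to check the scalar-coefficient identity $T_{t-s}x-x=\int_s^t T_{v-s}(Ax)\,dv$ inside $\N$. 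This is exactly $\int_0^{t-s}\tfrac{d}{du}T_u(x)\,du$ after the substitution $u=v-s$, which holds because $x\in\dom(A)$ and $\tfrac{\partial T_u}{\partial u}=-T_uA=-AT_u$ is the (bounded for $u>0$, and integrable near $0$ by the defining limit of $\dom(A)$) derivative established in the Preliminaries. Hence $E_{s]}(n_t(x)-n_s(x))=0$, which is the martingale property.

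For the reversed case one argues symmetrically with $E_{[s}$ and the identity $E_{[s}\pi_t(x)=\pi_s(T_{s-t}x)$ for $t<s$. Adaptedness of $n_s(x)=\pi_s(x)+\int_s^\infty\pi_v(A(x))\,dv$ to $\M_{[s}$ uses that $\pi_v(A(x))\in\M_{[v}\subset\M_{[s}$ for $v\ge s$; here one also needs the tail integral $\int_s^\infty\pi_v(Ax)\,dv$ to converge, which follows from $\|\pi_v(Ax)\|_p=\|T_v A x\|_p=\|AT_v x\|_p$ decaying (the derivative $\partial_v T_v$ tends to $0$ as $v\to\infty$ on the relevant $L_p$), together with the $^*$-homomorphism property of $\pi_v$. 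For $t<s$ one then gets
\[
 E_{[s}\bigl(n_t(x)-n_s(x)\bigr)
 \lel \pi_s(T_{s-t}x)-\pi_s(x)+\int_t^s\pi_s(T_{s-v}Ax)\,dv
 \lel \pi_s\Bigl(T_{s-t}x-x-\int_0^{s-t}T_u(Ax)\,du\Bigr)\lel 0,
\]
again by the fundamental theorem of calculus for the semigroup applied to $x\in\dom(A)$.

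The only genuinely delicate point is justifying the interchange of the conditional expectation (and of $\pi_s$) with the Bochner integral, and, in the reversed case, the convergence of the improper integral $\int_s^\infty\pi_v(Ax)\,dv$; both are handled by the integrability estimates for $\partial_v T_v$ recorded in Section 1 and the fact that $E_{s]}$, $E_{[s}$, $\pi_s$ are $L_p$-bounded. Everything else is a routine application of the dilation identities and the scalar-valued identity $T_h x - x=\int_0^h T_u(Ax)\,du$ valid on $\dom(A)$.
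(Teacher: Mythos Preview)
Your approach is the same as the paper's: verify the martingale property directly from the dilation identity together with the fundamental theorem of calculus for $T_u$. The paper in fact only writes out the reversed case (computing $E_{[t}n_s(x)=n_t(x)$ for $t>s$) and declares the standard case ``similar.''

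Two slips to fix. First, with the paper's convention $Af=\lim_{t\to0}t^{-1}(f-T_tf)$ one has $\partial_uT_u=-T_uA$, so the identity that makes your displayed expression vanish is $T_hx-x+\int_0^hT_u(Ax)\,du=0$; your ``scalar-coefficient identity'' $T_{t-s}x-x=\int_s^tT_{v-s}(Ax)\,dv$ and the last equality in the reversed display are each off by a sign. Second, your justification for convergence of the tail integral $\int_s^\infty\pi_v(Ax)\,dv$ is incorrect: $\pi_v$ is a trace-preserving $^*$-homomorphism, hence isometric on $L_p$, so $\|\pi_v(Ax)\|_p=\|Ax\|_p$, not $\|T_vAx\|_p$; there is no norm decay of the integrand. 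The paper does not address this point either, and in the only place the proposition is used (Proposition~\ref{nx}) one has $\hat A(Pf)=0$, so the integral term vanishes and the issue is moot.
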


\begin{proof} Let us verify that $E_{t}n_s(x)=n_t(x)$ for $t>s$ in
the reversed dilation case. The verification for $t<s$ in the
standard dilation case is similar. Due to the dilation property we
have
\begin{align*}
 &E_t\bigg(\pi_s(x)+\int_s^\infty\pi_v(A(x)) dv\bigg)\lel \pi_t(T_{t-s}(x))+\int_s^t
\pi_t(T_{t-v}A(x))dv+ \int_t^\infty\pi_v(A(x))dv\\
 &= \pi_t(T_{t-s}(x))+\int_s^t
\pi_t(\partial_vT_{t-v}(x))dv+ \int_t^\infty\pi_v(A(x))dv
 \lel \pi_t(x)+
\int_t^\infty\pi_v(A(x))dv.
\end{align*}
This means $n(x)$ satisfies the martingale property
$E_tn_s(x)=n_t(x)$ for $t>s$.
\end{proof}

The main ingredient in Meyer's model  is to use L\'{e}vy's stopping
time argument for the Brownian motion (see however \cite{GuCR},
\cite{GuVa} for more compact notation).
 Given a standard semigroup $T_t$ with generator $A$, assume $(T_t)$ admits a standard Markov dilation $(\pi_s,\M_s)$.
 We consider a new generator
 \[ \hat{A}\lel -\frac{d^2}{dt^2}+ A \pl .\]
 defined densely on
$$L^2({\Bbb R})\ten L^2(\N).$$
This leads to a new semigroup of operators $\hat{T}_t=\exp(-t\hat
{A})$ such that
 \begin{eqnarray*}
 \hat{\Gamma}(f(t),g(t))\lel \Gamma(f(t),g(t))+\frac{df^*(t)}{dt}\frac{dg(t)}{dt}.\end{eqnarray*}
Let $(B_t)$ be a classical one dimensional Brownian motion
associated with $dt$ (instead of the usual $\frac12 dt$ in the
stochastic differential equation) such that $B_0=a$ holds with
probability $1$. Let $\hat{M}_s\lel M_s^B\ten \M_s$ with $M_s^B$ the
von Neumann algebra of the Brownian motion observed until time $s$.
The Markov dilation for the new semigroup $\hat{T}_t$ is given by
$\hat{M}_t$ and $\hat{\pi}_t(f(\cdot))\lel \pi_tf(B_t(\cdot))$.

For $x\in L_p(\N), 1\leq p\leq\infty$, denote by $Px$  for the $L_p({\mathcal
N})$-valued function on $[0,\infty)$
 \[ Px(t)\lel P_t(x) \pl .\]
Recall that we write $P'x$ for the functions
$\frac{d}{dt}P_tx$. Fix a real number $a>0$. Let ${\bf t}_a$ be the stopping time of $B_t$ first hit $0$, i.e.
$${\bf t}_a=\inf\{t:
B_t(\om)=0\}.$$
The following observation due to P. A. Meyer.
\begin{prop}\label{nx} For any $x\in L_p(\N), 1\leq p\leq\infty,$
$$\hat{n}_a(x)=(\hat{\pi}_{\ttt_a\wedge t} Px)_t$$ is a martingale with respect to the filtration
$$\hat{\mathcal M}_{t,a}=\bigvee_{v\leq t}\hat{\pi}_{\ttt_a\wedge v}({\mathcal N}\otimes
L^\infty(\rz)).$$
\end{prop}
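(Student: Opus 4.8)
The plan is to verify the martingale property $\hat{E}_{t,a}\,\hat{n}_a(x)_u = \hat{n}_a(x)_t$ for $u > t$, where $\hat{E}_{t,a}$ denotes the conditional expectation onto $\hat{\mathcal M}_{t,a}$. First I would reduce to the case $x \in \dom(A)$ by density, since $P_t x$ and $P'_t x$ depend continuously on $x \in L_p(\N)$ and the stopped objects inherit the necessary integrability; actually the cleanest route is to prove the identity on a dense subalgebra and then pass to the limit in $L_p$, using that $\hat\pi_s$ is trace-preserving and the stopping time $\ttt_a$ is almost surely finite. The core computation rests on the classical fact, going back to Meyer, that for a space-time harmonic function the composition with Brownian motion is a local martingale: since $(\partial_t^2 - A)P_t = 0$ (noted after \eqref{ps-form}), the process $\hat\pi_t(P_{\ttt_a \wedge t} x)$ should be a martingale for the filtration $\hat{\mathcal M}_{t,a}$.

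Concretely, I would argue as follows. By Proposition \ref{meyer1} applied to the enlarged semigroup $\hat T_t = \exp(-t\hat A)$ with $\hat A = -\tfrac{d^2}{dt^2} + A$ on $L^2(\rz)\ten L^2(\N)$, the process
\[
\hat n_s(F) \lel \hat\pi_s(F) + \int_0^s \hat\pi_v(\hat A F)\, dv
\]
is a martingale with respect to $\hat{\mathcal M}_s = M_s^B \ten \M_s$ for any $F \in \dom(\hat A)$. Now take $F$ to be (an approximation of) the function $(b,y)\mapsto P_b y$ regarded as an element over $L^2(\rz)\ten\N$; because $\hat A(P_b y) = -\partial_b^2 P_b y + A P_b y = 0$ by the space-time harmonicity identity, the integral term vanishes and $\hat\pi_s(Px) = (\pi_s P_{B_s} x)_s$ is already a martingale for $\hat{\mathcal M}_s$. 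Then I would invoke the optional stopping theorem with the stopping time $\ttt_a$ (a.s.\ finite, bounded reasoning via Doob/optional stopping in the noncommutative $L_2$ setting, cf.\ the a.u.\ continuity discussion preceding Lemma \ref{ctmar}) to conclude that the stopped process $(\hat\pi_{\ttt_a\wedge t}(Px))_t = (\hat n_a(x)_t)_t$ is a martingale for the stopped filtration $\hat{\mathcal M}_{t,a} = \bigvee_{v\le t}\hat\pi_{\ttt_a\wedge v}(\N\ten L^\infty(\rz))$. The identification of the stopped filtration with the one in the statement is by definition of $\hat\pi$ and the fact that conditioning commutes with the $^*$-homomorphisms $\pi_v$.

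The main obstacle, and the step that needs genuine care rather than routine symbol-pushing, is the \emph{optional stopping} step in the operator-valued / von Neumann algebra setting: one must justify that stopping a noncommutative martingale at the (classical) stopping time $\ttt_a$ again yields a noncommutative martingale for the stopped filtration, with the right conditional-expectation identities, and that $P_{\ttt_a\wedge t}x$ lies in the appropriate $L_p$ so that everything is well defined. This is where the product structure $\hat{\mathcal M}_{t,a} = M^B_s \ten \M_s$ is essential: the stopping time is measurable with respect to the commutative Brownian factor, so one can condition first over the Brownian part (ordinary probability theory, where optional stopping for the bounded continuous $L^2$-martingale $B$ is classical) and then over $\M_s$, using the tower property $E_{\min(s,t)} = E_s E_t$ for the reversed/standard filtration. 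A secondary technical point is the a.u.\ continuity of the paths, needed to make sense of $\hat\pi_{\ttt_a\wedge t}$ pathwise and to apply the continuous-time martingale machinery from \cite{JuPe}; this follows from continuity of $b\mapsto P_b x$ in $L_p(\N)$ together with continuity of Brownian paths, exactly as in the proof of Lemma \ref{ctmar}.
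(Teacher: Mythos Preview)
Your proposal is correct and follows essentially the same route as the paper: apply Proposition \ref{meyer1} to the enlarged generator $\hat{A}$ and use $\hat{A}(Px)=0$ to see that $\hat{\pi}_t(Px)$ is a martingale, then stop at $\ttt_a$. The paper's proof is just these two sentences, without the density reduction or the discussion of optional stopping; your extra care about why stopping at a classical (Brownian-measurable) time preserves the martingale property in the noncommutative setting is a genuine elaboration of a point the paper takes for granted.
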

\begin{proof}
Apply Proposition \ref{meyer1} to $\hat{A}, \hat{\pi_t}$, we get
$\hat{\pi}_{t} (Px)$ is a martingale because $\hat{A}(Px)=0$.
Therefore, $\hat{\pi}_{\ttt_a\wedge t} (Px)$ is a martingale too
since $\ttt_a$ is a stopping time.
\end{proof}
Let
$$\hat{\M}_a=\overline{\vee_{t\geq0}
\hat{\M}_{a,t}}^{wot}=\overline{{\vee_{\ttt_a\geq t\geq0}
\M_t}}^{wot}.$$ Let $\hat{E}_{t}$ be the conditional expectation
from $\hat{\M}_a$ onto $\hat{\M}_{a,t}$. Proposition \ref{nx}
implies that
\begin{eqnarray}
\hat{E}_{t}(\pi_{\ttt_a}x)=\hat{\pi}_{t_a\wedge t}
Px=\pi_{\ttt_a\wedge t} P_{B_{\ttt_a\wedge t}}x,
\end{eqnarray}
for any $x\in \N$.

Meyer's model allows us to consider martingale spaces with respect
to the time and space filtrations simultaneously. Let
$L_p^{c,0}mo(\hat{\mathcal M}_a), L_p^0mo(\hat{\mathcal M}_a),
2<p\leq\infty$ be the martingale spaces with respect to the
filtration $\hat{\M}_{a,t}$. Recall that we have an orthogonal
projection $P_{br}$ on the subspace consisting of martingales
$x=(x_t)$ with the form \begin{eqnarray}\label{br}
x_t=\int_0^{t\wedge \ttt_a} y_sdB_s \end{eqnarray} with $y_s$
adapted to $\M_{s}$. By approximation, we see that $(x_t)_t$ has
continuous path, i.e. $x_t$ is continuous on $t$ with respect to the
$L_p$-norm, provided $\sup_t\|y_t\|_{L_p(\hat\M_a)}<\infty.$

Denote $P_{\Gamma}=I-P_{br}$. Recall that it is our convention to
write $bmo$ instead of $L_\infty mo$ and $BMO=bmo\cap (h_1^d)^*$ .

\begin{lemma}\label{bmo2} Let $(T_t)$ be a standard semigroup admitting a Markov dilation, $(P_t)_t$ the semigroup subordinated to $(T_t)_t$ and $f\in
\mathcal{N}\cup L_2(\N)$. Then
\begin{enumerate}
 \item[(i)] $\|f\|_{bmo^c(\P)} \lel \|\hat{n}_a(f)\|_{bmo^c(\hat{\M_a})}$.
 \item[(ii)]$
 \|P_{br}\hat{n}_a(f)\|_{BMO^c(\hat{\M_a})}\simeq  \|P_{br}\hat{n}_a(f)\|_{bmo^c(\hat{\M_a})} \simeq \sup_b
   \|\int_0^{\infty}\!\!
  P_{b+s}|\frac{\partial P_{s}f}{\partial s}|^2 \min(s,b) ds\|^{\frac
  12}$
  .
 \item[(iii)] If in addition  $\Gamma^2\gl 0$, then
  \begin{align*}
   \|P_{\Gamma}\hat{n}_a(f)\|_{bmo^c(\hat{\M_a})} &\simeq \sup_b
   \|\int_0^{\infty}\!\!
  P_{b+s}\Gamma(P_{s}f,P_{s}f) \min(s,b) ds\|^{\frac 12 }. \end{align*}
\end{enumerate}
\end{lemma}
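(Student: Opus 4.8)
The plan is to expose $\hat n_a(f)=(\hat\pi_{\ttt_a\wedge t}Pf)_t$ as a martingale whose conditioned square function decomposes cleanly along the orthogonal splitting $I=P_{br}+P_\Gamma$, and then to match each piece with the formulas already derived in Proposition \ref{bmo3}. For (i), I would compute $\hat E_t(\hat n_a(f))=\hat\pi_{\ttt_a\wedge t}P_{B_{\ttt_a\wedge t}}f$ from Proposition \ref{nx}, so that
\[
\hat E_t\big(|\hat n_a(f)|^2\big)-\big|\hat E_t\hat n_a(f)\big|^2
=\hat\pi_{\ttt_a\wedge t}\Big(P_{B_{\ttt_a\wedge t}}|f|^2-|P_{B_{\ttt_a\wedge t}}f|^2\Big),
\]
using that $\hat\pi$ is a trace-preserving $^*$-homomorphism. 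Taking the operator norm and the supremum over $t$, the stopped Brownian value $B_{\ttt_a\wedge t}$ ranges exactly over $[0,\infty)$ (it starts at $a$, is continuous, and is killed at $0$), and since $P_r|f|^2-|P_rf|^2$ is increasing in $r$ by (the proof of) Proposition \ref{monot}, the supremum is attained and equals $\sup_r\|P_r|f|^2-|P_rf|^2\|=\|f\|_{bmo^c(\P)}^2$. This gives (i).

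For (ii) and (iii) I would first invoke Lemma \ref{ctmar}: the martingale $P_{br}\hat n_a(f)$ has a.u.\! continuous path (it is a Brownian stochastic integral, by the description \eqref{br}), hence its $bmo^c$ and $BMO^c$ norms are equivalent, which is the first $\simeq$ in (ii). The substance is identifying the conditioned square functions of $P_{br}\hat n_a(f)$ and $P_\Gamma\hat n_a(f)$. Here I would use the It\^o-type decomposition of the martingale $\hat\pi_tPf$ coming from $\hat A(Pf)=0$: writing things out via Meyer's model, the bracket of $\hat n_a(f)$ splits into a ``Brownian part'' contributing $\int |P_s'f|^2\,ds$-type terms (this is $P_{br}$) and a ``spatial part'' contributing $\int \Gamma(P_sf,P_sf)\,ds$-type terms (this is $P_\Gamma$), consistently with $\hat\Gamma=\Gamma+|{\cdot}'|^2$. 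Concretely, the conditioned square function of $P_{br}\hat n_a(f)$ at level $t$ should come out to $\hat\pi_{\ttt_a\wedge t}\int_0^{\ttt_a\wedge t}P_{?}|P_s'f|^2\,\cdots$, and after taking the supremum over $t$ and the stopped Brownian position (again running over all of $[0,\infty)$) it matches $\sup_b\|\int_0^\infty P_{b+s}|P_s'f|^2\min(s,b)\,ds\|$. The analogous computation for $P_\Gamma$ gives the $\Gamma$-version in (iii), where $\Gamma^2\ge0$ is needed only to make $\Gamma(P_sf,P_sf)$ well-defined and monotone enough for the bookkeeping.

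The cleanest route is probably to avoid re-deriving the bracket from scratch and instead chain together what is already available: by (i) and Proposition \ref{bmo3}(ii)--(iii) we know
\[
\sup_b\Big\|\int_0^\infty P_{b+s}|P_s'f|^2\min(s,b)\,ds\Big\|\le 4\|f\|_{bmo^c(\P)}^2
=4\|\hat n_a(f)\|_{bmo^c(\hat\M_a)}^2,
\]
and under $\Gamma^2\ge0$ also the two-sided $\hat\Gamma$-estimate; so it remains to show these integrals are (up to constants) the $bmo^c$ norms of the pieces $P_{br}\hat n_a(f)$ and $P_\Gamma\hat n_a(f)$ of the \emph{orthogonal} decomposition, and that the full $\hat n_a(f)$ norm controls each piece (true since $P_{br},P_\Gamma$ are contractive martingale projections, hence $bmo^c$-bounded — this uses that they commute with the conditional expectations and are diagonal on martingale differences). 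The main obstacle I anticipate is precisely the identification of the square function of $P_{br}\hat n_a(f)$ with the Brownian-integral quantity: one must carry out the stochastic-calculus computation inside Meyer's model, track the stopping time $\ttt_a$ carefully, and check that conditioning at time $t$ combined with the substitution $b=B_{\ttt_a\wedge t}$ produces exactly the $\min(s,b)$ weight — this is where the earlier change-of-variable gymnastics in Proposition \ref{bmo3} get mirrored probabilistically, and it is the step most likely to hide a constant or a range-of-integration subtlety.
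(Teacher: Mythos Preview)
Your outline for (i) is the paper's, but with two issues. First, the claim that $r\mapsto P_r|f|^2-|P_rf|^2$ is increasing does not follow from Proposition \ref{monot} and is not used in the paper; drop it. Second, and more importantly, ``$B_{\ttt_a\wedge t}$ ranges over $[0,\infty)$'' is not a consequence of ``starts at $a$, continuous, killed at $0$'': a single path only covers $[0,\max_{s\le\ttt_a}B_s(\omega)]$. What you actually need for the lower bound is that for every $b>0$ the event $\{B_t=b\text{ for some }t<\ttt_a\}$ has positive probability. The paper proves this by a doubling (gambler's-ruin) argument: with probability $\tfrac12$ the path reaches $2a$ before $0$; conditionally on that, with probability $\tfrac12$ it reaches $4a$ before $0$; hence with probability $2^{-n}$ it reaches $2^na>b$ before $0$, and then by continuity it hits $b$ at some (random) time. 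Once this is in place, the upper bound is trivial and (i) follows without any monotonicity.

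For (ii) and (iii) your plan matches the paper's. The first $\simeq$ in (ii) is exactly Lemma \ref{ctmar} applied to the Brownian stochastic integral $P_{br}\hat n_a(f)$. For the identification of $\|P_{br}\hat n_a(f)\|_{bmo^c}$ and $\|P_\Gamma\hat n_a(f)\|_{bmo^c}$ with the displayed integrals, the paper does not carry out an independent It\^o computation either: it invokes Lemmas 2.5.5 and 2.5.10 of \cite{JMr} for one inequality, and for the other performs the explicit calculation you flagged as the obstacle, namely (again combined with the ``every $b$ is hit'' argument)
\[
\ez\int_0^{\ttt_b}T_s\Gamma(P_{\tilde B_s}x,P_{\tilde B_s}x)\,ds
=\tfrac12\int_0^\infty\!\!\int_{|b-s|}^{b+s}P_t\Gamma(P_sx,P_sx)\,dt\,ds
\ge\tfrac12\int_0^\infty P_{b+s}\Gamma(P_sx,P_sx)\min(b,s)\,ds,
\]
using Proposition \ref{monot} for the last step. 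Your alternative ``chaining'' route via (i) and Proposition \ref{bmo3} only yields one-sided control (the full $\|\hat n_a(f)\|_{bmo^c}$ dominates the integrals, and the projections are $bmo^c$-contractive), not the two-sided identification of each piece with its integral; the stochastic computation is not bypassed this way.
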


\begin{proof} We recall that $(\hat{n}_a(f))_t=
\hat{\pi}_{\ttt_a\wedge t}(Pf)=\pi_{\ttt_a\wedge
t}(P_{B_{\ttt_a\wedge t}}(f))$. So the end element
$\hat{n}_a(f)=\pi_{\ttt_a}(f)$,
$(\hat{n}_a(f))_0=\pi_{0}(P_{B_0}(f))=\1(\omega)\otimes\pi_0P_af$.
Hence we get
 \begin{align*}
  \hat{E}_t(|\hat{n}_a(f)|^2)-|\hat{E}_t(\hat{n}_a(f))|^2
  &= \pi_{\ttt_a\wedge t}(P_{B_{\ttt_a\wedge t}}|f|^2-|P_{B_{\ttt_a\wedge
  t}}f|^2) \pl,
  \end{align*}
for $\ttt_a(\om)>t$. Thus in any case we have
 \[ {\rm ess}\sup_{\om} \|\hat{E}_t(|\hat{n}_a(f)|^2)-|\hat{E}_t(\hat{n}_a(f))|^2\| \kl \sup_s \|\pi_{\ttt_a\wedge
 t}(P_s|f|^2-|P_sf|^2)\|\kl \|f\|_{bmo^c(\P)}^2 \pl .\]
However, recall that $B_0(\om)=a$ almost everywhere. This means
$B_t=a+\tilde{B}_t$ where $\tilde{B}_t$ is a centered Brownian
motion. Since $\limsup_t |\tilde{B}_t|/\sqrt{2t\log\log t}=1$, we
know that with probability $1$ the process $|\tilde{B}_t|$ exceeds
$a$. Thus with probability $1$ the process $B_t$ hits $0$ or $2a$.
Hence with probability $\frac12$ the process hits $2a$ before it
hits $0$. Let us assume that $B_{t(\om)}(\om)=2a$ and $B_s(\om)>0$
for $0<s<t(\om)$. By starting a new Brownian motion at $t(\om)$, we
see with conditional probability $\frac12$ we have $B_{t'(\om)}=4a$
for some $t(\om)<t'(\om)$ and $B_s(\om)>0$ for all
$t(\om)<s<t'(\om)$. By induction we deduce  that with probability
$2^{-n}$ the process $B_t$ hits $2^na$ before it hits $0$. Thus
given any $b>0$, we may choose $n$ such that $2^{n}a>b$. We see that
with positive probability there exists $t_n(\om)$ such that
$B_{t_n(\om)}=2^na$ and $B_s(\om)>0$ on $[0,t_n(\om)]$ and $B_s$ is
continuous. By continuity there exists $t(\om)\in
[t_n(\om),\ttt_a(\om)]$ such that $B_{t_{\om}}=b$. In particular,
 \[ \|\hat{E}_{t(\om)}(|\hat{n}_a(f)|^2)-|\hat{E}_{t(\om)}\hat{n}_a(f)|^2\| \lel \|\pi_{t(\om)}(P_{B_{t(\om)}}|f|^2-|P_{B_{t(\om)}}f|^2)\|
 \lel \|P_b|f|^2-|P_bf|^2\| \pl . \]
Taking the supremum over all $b$ yields (i). For the proof of (iii)
we first apply Lemma 2.5.5 and Lemma 2.5.10 (ii)  of \cite{JMr}(note
there, $\rho_a$ denotes for $\hat{n}_a$) . This immediately yields
the first inequality (after a concise review of the involved
constant for $\beta=\frac{2}{3}$). For the upper estimate of this
term, we recall that with positive probability every value $b$ is
hit. Then we start in equality (3.20) for a fixed $b=B_t(\om)$. We
use the monotonicity $\frac{P_{b+s}(z)}{b+s}\kl \frac{P_t(z)}{t}$
and find
 \begin{align*}
 &\ez\int_0^{\ttt_b} T_s(\Gamma(P_{\tilde{B}_s}x,P_{\tilde{B}_s}x)) ds
 \lel \frac{1}{2} \int_0^{\infty} \int_{|b-s|}^{b+s} P_{t}\Gamma(P_sx,P_sx)
 dt ds \\
 &\gl   \frac{1}{2} \int_0^{\infty}
\frac{P_{b+s}\Gamma(P_sx,P_sx)}{b+s}
 \kla \int_{|b-s|}^{b+s} t dt \mer ds \lel
  \int_0^{\infty} \frac{P_{b+s}\Gamma(P_sx,P_sx)}{b+s} bs \pl
 ds \\
 &\gl \frac{1}{2} \int_0^{\infty} P_{b+s}\Gamma(P_{s}x,P_{s}x) \min(b,s)
 ds \pl .
 \end{align*}
The proof of the second equivalence of (ii) uses Lemma 2.5.10 (i) of
\cite{JMr} and is similar to (iii) but we only need $|P_tz|^2\le
P_t|z|^2$ instead of $\Gamma^2\gl 0$. The first equivalence of (ii)
follows from Lemma \ref{ctmar} and the fact that $P_br\hat{n}_a(f)$
has continuous path.
\end{proof}

\begin{lemma}\label{interbr} For any $y\in
L_p(\N), 2<p<\infty$, we have
\begin{eqnarray}\label{eqbr}
\|P_{br}\pi_{\tau_a}y\|_{L_{p}mo(\hat{\M}_a)}+\|
P_ay\|_{L_p(\hat{\M}_a)}\simeq\|y\|_{L_p(\N)},
\end{eqnarray}
with equivalent constants $\simeq p$ for $p>4$. Assume that $(\hat
E_{t}\pi_{\tau_a}y)_t$ is a.u. continuous, then
\begin{eqnarray}\label{eqbr1}
\|P_\Gamma \pi_{\tau_a}y\|_{L_pmo(\hat{\M}_a)}+\|
P_ay\|_{L_p(\hat{\M}_a)} \simeq\|y\|_{L_p(\N)}
\end{eqnarray}
with equivalence constants $\simeq p$ for $p>4$.
\end{lemma}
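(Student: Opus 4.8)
The plan is to run, at the $L_p$-level, the same transfer that gave Lemma \ref{bmo2}: move everything onto the stopped martingale $\pi_{\tau_a}y=\hat n_a(y)$ on $\hat\M_a$ and combine noncommutative martingale $L_p$-theory with the complementary projections $P_{br}$ and $P_\Gamma=I-P_{br}$. The decisive inputs are the identity $\|x\|_{L_pmo(\M)}+\|\E x\|_{L_p(\M)}\simeq_p\|x\|_{L_p(\M)}$ of Lemma \ref{JuPe} (valid for $p>4$ when $(E_tx)_t$ is a.u.\ continuous), the noncommutative Burkholder--Gundy inequalities (constants $\simeq p$ for $p\geq 2$, cf. \cite{PX,JX}), and Lemma \ref{ctmar}.

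First I would fix the bookkeeping. Since $\pi_{\tau_a}$ is a trace preserving $^*$-homomorphism, $\|\pi_{\tau_a}y\|_{L_p(\hat\M_a)}=\|y\|_{L_p(\N)}$; and by Proposition \ref{nx}, $\E\pi_{\tau_a}y=(\hat n_a(y))_0=\1\otimes\pi_0(P_ay)$, so $\|\E\pi_{\tau_a}y\|_{L_p(\hat\M_a)}=\|P_ay\|_{L_p(\N)}$, the term occurring in (\ref{eqbr}) and (\ref{eqbr1}). Moreover $P_{br}\pi_{\tau_a}y=\int_0^{\tau_a}P_s'y\,dB_s$ is a Brownian stochastic integral, so $(\hat E_tP_{br}\pi_{\tau_a}y)_t$ has a.u.\ continuous path (the remark after the definition of $P_{br}$, using the $L_p$-bound below); hence, when $(\hat E_t\pi_{\tau_a}y)_t$ is a.u.\ continuous, so is $(\hat E_tP_\Gamma\pi_{\tau_a}y)_t$, being its difference with the Brownian part. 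Applying Lemma \ref{JuPe} to $P_{br}\pi_{\tau_a}y$ (with $\E(P_{br}\pi_{\tau_a}y)=0$) and to $P_\Gamma\pi_{\tau_a}y$ (with $\E(P_\Gamma\pi_{\tau_a}y)=\E\pi_{\tau_a}y$) gives $\|P_{br}\pi_{\tau_a}y\|_{L_pmo}\simeq_p\|P_{br}\pi_{\tau_a}y\|_{L_p}$ and $\|P_\Gamma\pi_{\tau_a}y\|_{L_pmo}+\|P_ay\|_{L_p}\simeq_p\|P_\Gamma\pi_{\tau_a}y\|_{L_p}$. Since the ranges and complements of $P_{br},P_\Gamma$ consist of continuous martingales, both projections are bounded on $L_p(\hat\M_a)$ with constant $\simeq p$ (Burkholder--Gundy via Lemma \ref{ctmar}), so the right-hand sides are $\lesssim_p\|\pi_{\tau_a}y\|_{L_p}=\|y\|_{L_p}$; together with $\|P_ay\|_{L_p}\leq\|y\|_{L_p}$ this proves the upper bounds in both equivalences.

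The reverse inequalities are the heart of the matter and the main obstacle. Applying Lemma \ref{JuPe} to $\pi_{\tau_a}y$ itself gives $\|y\|_{L_p}\simeq_p\|\pi_{\tau_a}y\|_{L_pmo}+\|P_ay\|_{L_p}$, so it suffices to dominate $\|\pi_{\tau_a}y\|_{L_p}$ by $\|P_{br}\pi_{\tau_a}y\|_{L_p}+\|P_ay\|_{L_p}$ for (\ref{eqbr}) and by $\|P_\Gamma\pi_{\tau_a}y\|_{L_p}+\|P_ay\|_{L_p}$ for (\ref{eqbr1}) --- a weighted noncommutative $L_p$ Littlewood--Paley equivalence. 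Concretely, by Burkholder--Gundy the $L_p$ norm of the continuous martingale $P_{br}\pi_{\tau_a}y$ is comparable to the $L_p$ norm of its conditioned square function, whose expectation over the Brownian factor equals the weighted $g$-function $c\int_0^\infty\min(a,u)\,|P_u'y|^2\,du$, since a Brownian motion started at $a$ and killed at $0$ has expected occupation density $c\min(a,u)$ on $(0,\infty)$ (recurrence; this is the weight $\min(s,b)$ already visible in Lemma \ref{bmo2}(ii)); contracting with the conditional expectation onto $\N$ and using that --- unlike at the $BMO$ endpoint --- the normal-direction $L_p$ Littlewood--Paley estimate does hold (with constants $\simeq p$; cf. \cite{JMr}) identifies this with $\|y-P_ay\|_{L_p}$, whence $\|y\|_{L_p}\lesssim_p\|P_{br}\pi_{\tau_a}y\|_{L_p}+\|P_ay\|_{L_p}$. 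For (\ref{eqbr1}) the argument is parallel, with $\Gamma$ replacing $|(\cdot)'|^2$ in the square function and with the $L_p$-boundedness of the generalised Riesz transforms associated with $(T_t)$ used to pass from the tangential $g$-function back to the normal one, giving $\|P_{br}\pi_{\tau_a}y\|_{L_p}\lesssim_p\|P_\Gamma\pi_{\tau_a}y\|_{L_p}+\|P_ay\|_{L_p}$. Tracking the constants through Burkholder--Gundy, this square-function estimate and Lemma \ref{JuPe} then yields the asserted dependence $\simeq p$ for $p>4$.
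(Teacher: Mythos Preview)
Your outline is essentially the paper's own proof. The paper records exactly the three ingredients you isolate --- the a.u.\ continuity of the Brownian part $P_{br}\hat E_t\pi_{\tau_a}y$, the identification $\hat E_0\pi_{\tau_a}y=\pi_0P_ay$, and Lemma~\ref{JuPe} --- and for the remaining Littlewood--Paley type equivalence (your ``reverse inequalities'') it simply invokes Lemma~2.5.11 of \cite{JMr}. What you add is a sketch of that cited lemma: Burkholder--Gundy on the stochastic integral, the occupation density $\min(a,u)$ of the killed Brownian motion, and the normal/tangential $g$-function comparison. So the route is the same; you are just opening the black box.

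Two places where your sketch is thinner than it needs to be. First, the sentence ``Applying Lemma~\ref{JuPe} to $\pi_{\tau_a}y$ itself'' at the start of the reverse-inequality paragraph is only licensed under the a.u.\ continuity hypothesis of \eqref{eqbr1}, not for \eqref{eqbr}; for \eqref{eqbr} you do not actually need it (you go directly for $\|y\|_p\lesssim\|P_{br}\pi_{\tau_a}y\|_p+\|P_ay\|_p$ via the $g$-function), so that line should be dropped or restricted to \eqref{eqbr1}. Second, for \eqref{eqbr1} you appeal to the $L_p$-boundedness of the generalised Riesz transforms to pass from the $\Gamma$-square function back to the normal one; in this paper that boundedness is established under $\Gamma^2\geq 0$ (Theorem~\ref{meyer} and its corollary), which is \emph{not} among the hypotheses of Lemma~\ref{interbr}. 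In the paper's applications $\Gamma^2\geq 0$ is always in force when \eqref{eqbr1} is used, but as written your argument imports an extra assumption. The paper avoids this by outsourcing the whole $L_p$ comparison $\|P_{br}\pi_{\tau_a}y\|_p,\|P_\Gamma\pi_{\tau_a}y\|_p\simeq\|y-P_ay\|_p$ to \cite{JMr}, Lemma~2.5.11, rather than rederiving it through Riesz transforms.
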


\begin{proof} This follows from the fact that $P_{br}\hat E_t\pi_{\ttt_a}y$ has continuous path, $\hat E_0\pi_{\tau_a}y=\pi_0P_a$,
Lemma \ref{JuPe} of this article, and Lemma 2.5.11 of \cite{JMr}
(Note $\pi_{\tau_a}$ is denoted by $\rho_a$ there).
\end{proof}

\subsection{Noncommutative Riesz Transforms}
We will prove a $L^\infty$-BMO boundedness for the noncommutative
Riesz transforms studied in \cite{JMr} in the first subsection.

Recall that the classical Riesz transforms on ${\Bbb R}^n$ can be
viewed as $\partial\cdot (-\triangle)^{-\frac12}$. Given a standard
semigroup of operators $T_t=e^{-tA}$, it is P. A. Meyer's idea to
view the generator $A$ as an analogue of $-\triangle$ and the
associated bilinear form $\Gamma(f,f)$ as $|\partial f|^2$. The
generalized Riesz transform of a function $f$ is
$[\Gamma(A^{\frac12}f,A^{\frac12}f)]^{\frac12}$. As a noncommutative
extension of Meyer's result, Junge/Mei proved in \cite{JMr} that
$$\|A^{\frac12}f\|_{L_p(\N)}\leq c_p\|[\Gamma(f,f)]^\frac12\|_{L_p(\N)},$$
for $2<p<\infty$ and self adjoint elements $f\in L_p(\N)$ with
additional assumptions on $T_t$. We will extend this
$L_p$-boundedness to $L^\infty-BMO$ boundedness.

\begin{theorem}\label{meyer} Assume $T_t$ admits a Markov dilation and satisfies $\Gamma^2\geq0$, we have
$$\|A^{\frac12}g\|_{BMO^c({\Gamma})}\leq c\max\{\|[\Gamma(g,g)]^\frac12\|,\|[\Gamma(g^*,g^*)]^\frac12\|\},$$
for $g\in \A$.
\end{theorem}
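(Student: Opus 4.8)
The plan is to estimate $\|A^{\frac12}g\|_{BMO^c(\Gamma)}$ by expanding the defining square function and reducing everything to the subordinated Poisson semigroup, where the $\Gamma^2\geq 0$ estimates of Proposition \ref{bmo3} and Lemma \ref{lem1} are available. First I would write $A^{\frac12}$ in terms of $P_t$: since $P_t=\exp(-tA^{\frac12})$ we have $P_t'g = -A^{\frac12}P_tg$, so $A^{\frac12}g$ can be recovered from the derivatives of the Poisson orbit. Concretely, for the square-function norm we need to control $\sup_t \|P_t\int_0^t \Gamma(P_s A^{\frac12}g, P_s A^{\frac12}g)\,s\,ds\|$. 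Using $P_sA^{\frac12}g = -P_s' g = -\partial_s P_s g$ and the identity $\partial_s P_s g = \partial_s P_{s/2}(P_{s/2}g)$, I would like to rewrite $P_s A^{\frac12} g$ as $P_{s/2}$ applied to something controlled by $\Gamma(P_{s/2}g,P_{s/2}g)$; this is exactly what the second half of Lemma \ref{convex gamma} delivers, namely ${\Gamma}(v^n\partial^n P_v f, v\partial^n P_vf)\leq c_n P_{v/2}\Gamma(f,f)$, applied with $n=1$.

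The second step is to feed this into the $M_{a,n}$-machinery. Observe that $A^{\frac12}g = -\int_0^\infty P_t'' g\,dt$ (since $P_t'g\to 0$ and $P_0'g = -A^{\frac12}g$), and more generally one can realize $A^{\frac12}$, up to normalization, as an operator of the form $M_{a,n}$ with $a$ a suitable power of $t$ satisfying the Carleson-type condition \eqref{ca} — or, more cleanly, one checks directly that the pointwise bound $\Gamma(P_s A^{\frac12}g, P_s A^{\frac12}g) = s^{-2}\Gamma(sP_s'g,sP_s'g)\leq c\, s^{-2} P_{s/2}\Gamma(P_{s/2}g, P_{s/2}g)$ holds by Lemma \ref{convex gamma}. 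Then
\[
P_t\int_0^t \Gamma(P_sA^{\frac12}g,P_sA^{\frac12}g)\,s\,ds \kl c\, P_t\int_0^t P_{s/2}\Gamma(P_{s/2}g,P_{s/2}g)\,\frac{ds}{s},
\]
and after the change of variables $v=s/2$ and a dyadic splitting of the $\frac{dv}{v}$ integral (handling the singularity at $0$ using $\Gamma^2\geq 0$ and Proposition \ref{monot} to absorb the logarithmic loss, exactly as in the proof of Lemma \ref{lem1}), this is dominated by $c\sup_t\|\int_0^\infty P_{v+t}\Gamma(P_vg,P_vg)\min(v,t)\,dv\|$, which by Lemma \ref{lem1} is $\simeq \|g\|_{BMO^c(\Gamma)}^2$. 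But we want the right-hand side in terms of $\|[\Gamma(g,g)]^{\frac12}\|$, not $\|g\|_{BMO^c(\Gamma)}$; here one uses that $\Gamma(P_vg,P_vg)\leq P_v\Gamma(g,g)$ (a consequence of $\Gamma^2\geq 0$, noted right after its definition), so that $P_{v+t}\Gamma(P_vg,P_vg)\leq P_{2v+t}\Gamma(g,g)$, and then $\sup_t\|\int_0^\infty P_{2v+t}\Gamma(g,g)\min(v,t)\,dv\| \kl \|\Gamma(g,g)\|\cdot\sup_t\int_0^\infty\min(v,t)e^{-\text{(something)}}\ldots$ — more simply, $P_{2v+t}\Gamma(g,g)\leq \|\Gamma(g,g)\|\,1$ and the remaining scalar integral $\int_0^\infty \min(v,t)\,dv$ diverges, so one must instead keep the operator $P_{2v+t}$ and use that $T_t$ (hence $P_t$) are contractions together with the Carleson normalization; the cleanest route is to bound $\int_0^t P_{s/2}\Gamma(P_{s/2}g,P_{s/2}g)\,\frac{ds}{s}$ directly against $\log$-weighted averages of $P_{s/2}\Gamma(g,g)$ and then against $\|\Gamma(g,g)\| \cdot 1$ on each dyadic block, summing a convergent geometric series.

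The row term is handled by replacing $g$ with $g^*$ throughout, which is why the statement takes the maximum of the two $\Gamma$-norms. The main obstacle, as usual in this circle of ideas, is the endpoint bookkeeping: making the $\frac{ds}{s}$-type singularity at $s=0$ genuinely integrable after the Lemma \ref{convex gamma} substitution, and ensuring the dyadic sum of the resulting $P_{2^k t}\Gamma(g,g)$-averages converges with a dimension-free constant. This is exactly the place where $\Gamma^2\geq 0$ and the monotonicity Proposition \ref{monot} are indispensable — without them the logarithmic loss in $\|T_u|f-T_vf|^2\|$ (seen in the proof of Theorem \ref{lem0}) would not be summable. I expect the proof to otherwise be a routine assembly of Lemma \ref{convex gamma}, Lemma \ref{mulgamma}/Theorem \ref{mul1}, and Lemma \ref{lem1}, in that order.
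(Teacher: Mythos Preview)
There is a genuine gap. After the substitution from Lemma~\ref{convex gamma} you reach
\[
 P_t\int_0^t \Gamma(P_s A^{\frac12}g,P_s A^{\frac12}g)\,s\,ds
 \;\le\; c\,P_t\int_0^t P_{s/2}\Gamma(g,g)\,\frac{ds}{s},
\]
and this right–hand side is \emph{truly} divergent near $s=0$: $P_{s/2}\Gamma(g,g)\to P_0\Gamma(g,g)=\Gamma(g,g)$, so the integrand behaves like a constant times $\frac{ds}{s}$. Neither $\Gamma^2\ge 0$ nor Proposition~\ref{monot} can cure this—both improve estimates by pushing to \emph{smaller} Poisson times, which only makes the problem worse. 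The dyadic argument in the proof of Lemma~\ref{lem1} handles the tail $s\to\infty$, not the singularity at $0$, so ``exactly as in the proof of Lemma~\ref{lem1}'' does not apply here. Relatedly, $A^{\frac12}$ is \emph{not} of the form $M_{a,n}$ with $a$ satisfying~\eqref{ca}: one checks that realizing $A^{\frac12}$ via $t^{n-1}\partial_t^n P_t$ forces $a(t)\sim t^{-1}$, for which $s\int_s^\infty |a(v-s)|^2 v^{-2}\,dv$ diverges. So the multiplier machinery of Lemma~\ref{mulgamma}/Theorem~\ref{mul1} is not available either. A final red flag: your argument never invokes the Markov dilation, although it is an explicit hypothesis.

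The paper's proof is entirely different and uses the dilation essentially. The key identity, for $f(s)=\Gamma(P_sg,P_sg)$, is
\[
 -\hat A f \;=\; 2\,\Gamma^2(P_sg,P_sg)+2\,\Gamma(P_s'g,P_s'g)\;\ge\; 2\,\Gamma(P_s'g,P_s'g)
\]
(here $\Gamma^2\ge 0$ enters). Via Meyer's model and Lemma~\ref{bmo2}(iii) one has
$\|A^{\frac12}g\|_{BMO^c(\Gamma)}^2 \simeq \sup_t\bigl\|\hat E_t\!\int_{\ttt_a\wedge t}^{\ttt_a}\hat\pi_r\,\Gamma(P_s'g,P_s'g)\,dr\bigr\|$,
and the superharmonicity above dominates this by
$\sup_t\bigl\|-\hat E_t\!\int_{\ttt_a\wedge t}^{\ttt_a}\hat\pi_r\,\hat A f\,dr\bigr\|$.
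Proposition~\ref{meyer1} then telescopes the integral to $\hat E_t\hat\pi_{\ttt_a}f-\hat\pi_{\ttt_a\wedge t}f$, whose norm is bounded by $2\|\Gamma(g,g)\|$ since $f(s)=\Gamma(P_sg,P_sg)\le P_s\Gamma(g,g)$. The crucial idea you are missing is this superharmonicity of $s\mapsto\Gamma(P_sg,P_sg)$ together with the martingale (Doob--Meyer type) cancellation it enables; a purely square-function approach via Lemma~\ref{convex gamma} cannot recover the lost factor of $s^2$.
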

\begin{proof}
By the assumption of a Markov dilation $(\pi _t,E_t)$ of a standard
semigroup $T_t=e^{-tA},$ we have
\[
E_t\pi _uf-\pi _tf=\pi _t(T_{u-t}f-f)=\pi _t\int_t^u\frac{\partial T_{r-t}}{%
\partial r}fdr=-E_t\int_t^u\pi _rAfdr,
\] for $f\in \dom(A)$.
Apply to the Markov dilation $(\widehat{\pi }_t,\widehat{E}_t)$ of
the new semigroup $\widehat{T}_t=e^{-t\widehat{A}},$ we get
\[
\widehat{E}_t\widehat{\pi }_uf-\widehat{\pi }_tf=-\widehat{E}_t\int_t^u%
\widehat{\pi }_r\widehat{A}fdr.
\]
Passing to the stopping time $\ttt_a,$ we get
\[
\widehat{E}_t\widehat{\pi }_{\ttt_a}f-\widehat{\pi }_{\ttt_a\wedge t}f=-%
\widehat{E}_t\int_{\ttt_a\wedge t}^{\ttt_a}\widehat{\pi
}_r\widehat{A}fdr.
\]
For a given self adjoint $g\in \A$, let
\[
f(s)=\Gamma (P_sg,P_sg),
\]
It is an easy calculation by definition of $\Gamma ^2$ that
\[
-\widehat{A}f=2\Gamma ^2(P_sg,P_sg)+2\Gamma (P_s^{\prime
}g,P_s^{\prime }g).
\]
By $\Gamma ^2\geq 0,$ we have
\[
-\widehat{A}f\geq 2\Gamma (P_s^{\prime }g,P_s^{\prime }g)\geq 0.
\]
By Lemma 2.5.10 (ii) of \cite{JMr} (note $\rho_a$ denotes the same
martingale of $\hat{n}_a$),
\begin{eqnarray*}
\left\| P_\Gamma\hat{n}_a(g)\right\| _{bmo^c} 
=\sup_t\|\widehat{E}_t\int_{\ttt_a\wedge t}^{\ttt_a}\widehat{\pi }%
_r\Gamma (P_sg,P_sg)dr\|.
\end{eqnarray*}
Therefore, by Proposition \ref{bmo2},
\begin{align*}
\left\| A^{\frac 12}g\right\|^2 _{BMO^c(\Gamma )} &\approx \left\|
P_\Gamma \hat{n}_a(A^{\frac 12}g)\right\|^2 _{bmo^c} \lel
 \sup_t\|\widehat{E}_t\int_{\ttt_a\wedge t}^{\ttt_a}\widehat{\pi }%
_r\Gamma (P_s^{\prime }g,P_s^{\prime }g)dr\| \\
&\leq \sup_t\|-\widehat{E}_t\int_{\ttt_a\wedge t}^{\ttt_a}\widehat{\pi }_r%
\widehat{A}fdr\|
\lel \sup_t\|\widehat{E}_t\widehat{\pi }_{\ttt_a}f-\widehat{\pi
}_{\ttt_a\wedge t}f\|
\\
&\leq \|\widehat{\pi }_{\ttt_a}f\|+\sup_t\|\widehat{\pi }_{\ttt_a\wedge t}f\|
\lel \|\widehat{\pi }_{\ttt_a}\Gamma (Pg,Pg)\|+\sup_t\|\widehat{\pi
}_{\ttt_a\wedge
t}\Gamma (Pg,Pg)\| \\
&\leq \|\widehat{\pi }_{\ttt_a}P\Gamma (g,g)\|+\sup_t\|\widehat{\pi }%
_{\ttt_a\wedge t}P\Gamma (g,g)\| \\
&=\|\widehat{\pi }_{\ttt_a}P\Gamma (g,g)\|+\sup_t\|\widehat{E}_t\widehat{\pi }%
_{\ttt_a}P\Gamma (g,g)\| \\
&\leq 2\|\Gamma (g,g)\|.
\end{align*}
For non-self adjoint $g$, we obtain the desired inequality by
splitting $g=\frac {g^*+g}2 +i\frac {ig^*-ig}2$.
\end{proof}
\begin{cor}
Let $T_t$ be a standard semigroup satisfying $\Gamma^2\geq0$ and
admitting an a.u. continuous Markov dilation (see definition at the
beginning of the next section). We have
\begin{eqnarray*}
  \|A^\frac12g\|_{L_p(\N)}\leq
  cp\max\{\|\Gamma(g,g)\|_{L_p(\N)},\|\Gamma(g^*,g^*)\|_{L_p(\N)}\},
 \end{eqnarray*} for $2<p<\infty$.
\end{cor}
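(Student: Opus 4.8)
The plan is to run the proof of Theorem~\ref{meyer} once more, this time in the norm $\|\cdot\|_{p/2}$ instead of the operator norm, and then to transfer the resulting martingale estimate back to $L_p(\N)$ by means of Lemma~\ref{interbr} and Lemma~\ref{JuPe}; I read the right-hand side of the asserted inequality as $\max\{\|[\Gamma(g,g)]^{\frac12}\|_{L_p(\N)},\|[\Gamma(g^*,g^*)]^{\frac12}\|_{L_p(\N)}\}$, consistently with Theorem~\ref{meyer}. We may assume $g=g^*$, treating the general case at the end via $g=\frac{g+g^*}{2}+i\frac{g^*-g}{2i}$. The only preliminary input is the $L_2$-identity $\|A^{\frac12}g\|_{L_2(\N)}=\|[\Gamma(g,g)]^{\frac12}\|_{L_2(\N)}$, which follows from $\|A^{\frac12}g\|_{L_2(\N)}^2=\tau(g^*Ag)$ and
\[
2\tau(\Gamma(g,g))=\tau((Ag^*)g)+\tau(g^*Ag)-\tau(A(g^*g))=2\tau(g^*Ag),
\]
using $\tau\circ A=0$ and the $\tau$-selfadjointness of $A$.

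Put $f(s)=\Gamma(P_sg,P_sg)$. As in the proof of Theorem~\ref{meyer}, $-\hat A f=2\Gamma^2(P_sg,P_sg)+2\Gamma(P_s'g,P_s'g)$, so $\Gamma^2\geq0$ gives the pointwise inequalities $0\leq\Gamma(P_s'g,P_s'g)\leq-\hat A f$ and $\Gamma(P_sg,P_sg)\leq P_s\Gamma(g,g)$. One then repeats the chain of estimates ending the proof of Theorem~\ref{meyer}: each step there is either an inequality between positive operators or an instance of the martingale identity of \cite{JMr} expressing the norm of the $P_{\Gamma}$-part of $\hat{n}_a(A^{\frac12}g)$ through a conditioned $\Gamma$-bracket (cf. Lemma~\ref{bmo2}(iii)), hence survives when the operator norm is replaced by $\|\cdot\|_{p/2}$ and the supremum over the time parameter by the vector-valued $L_{p/2}(\ell_\infty^{c})$-norm; the one extra ingredient is Junge's noncommutative Doob inequality, applied to the positive martingale $(\hat{n}_a(\Gamma(g,g)))_t$ and legitimate since $p/2>1$. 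As $\hat{\pi}_{\ttt_a}$ is trace preserving, this produces, for every $a>0$,
\[
\big\|P_{\Gamma}\,\hat{n}_a(A^{\frac12}g)\big\|_{L_p^cmo(\hat{\M}_a)}\;\leq\;c_p\,\big\|[\Gamma(g,g)]^{\frac12}\big\|_{L_p(\N)}.
\]

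To conclude, I would invoke the hypothesis that the Markov dilation is a.u.\ continuous: then $(\hat{E}_t\hat{\pi}_{\ttt_a}(A^{\frac12}g))_t$ is a.u.\ continuous, $P_{br}\hat{n}_a(A^{\frac12}g)$ has continuous path, and Lemma~\ref{interbr} (equation~\eqref{eqbr1}) together with Lemma~\ref{JuPe} yields $\|A^{\frac12}g\|_{L_p(\N)}\leq c_p\big(\|P_{\Gamma}\hat{n}_a(A^{\frac12}g)\|_{L_pmo(\hat{\M}_a)}+\|P_a A^{\frac12}g\|_{L_p}\big)$ with $c_p\simeq p$ for $p>4$. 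The first summand is independent of $a$ (the $L_p$ analogue of Lemma~\ref{bmo2}(i), using that every value is hit with positive probability), while $\|P_a A^{\frac12}g\|_{L_p}\to0$ as $a\to\infty$ by the strong $L_p$-stability of $(P_t)$; hence the second summand is absorbed. Applying the estimate of the preceding paragraph to both $g$ and $g^*$ (note $P_{\Gamma}\hat{n}_a(A^{\frac12}g)^*=P_{\Gamma}\hat{n}_a(A^{\frac12}g^*)$, as $A$ and $P_{\Gamma}$ commute with the adjoint) settles the corollary for $p>4$; for $2<p\leq4$ one interpolates the $L_2$-identity of the first paragraph with the case $p=8$ along the gradient $L_p^c(\Gamma)$-scale of \cite{JMr}.

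The delicate step is this last passage, from the ambient martingale space $L_pmo(\hat{\M}_a)$ back to $\|A^{\frac12}g\|_{L_p(\N)}$: it rests on \eqref{eqbr1} (equivalently \eqref{eq}), whose linear-in-$p$ constant requires the a.u.\ continuity of $(\hat{E}_t\hat{\pi}_{\ttt_a}y)_t$ — precisely what ``a.u.\ continuous Markov dilation'' supplies — and this, together with the blow-up of the Doob constant as $p/2\downarrow1$, is why the estimate is established first for $p>4$ and why the factor in the corollary is $cp$ rather than an absolute constant.
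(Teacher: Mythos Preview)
Your argument for $p>4$ is essentially the paper's own: run the pointwise chain from Theorem~\ref{meyer} inside $\|\sup_t^+\cdot\|_{p/2}$, use Doob on the positive martingale $\hat E_t\pi_{\ttt_a}\Gamma(g,g)$, and then invoke Lemma~\ref{interbr} together with the a.u.\ continuity (Proposition~\ref{a.u.}) to return to $L_p(\N)$, absorbing $\|P_aA^{\frac12}g\|_p\to0$. Your observation that Doob is the extra ingredient is correct, and since $p/2>2$ in this range the Doob constant stays bounded, so the paper's ``same argument'' with an absolute $c$ is justified and the factor $p$ comes only from Lemma~\ref{interbr}.

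The genuine difference is in the range $2<p\le4$. The paper does not interpolate; it simply quotes \cite{JMr}, Theorem~2.5.13, which already gives $\|A^{\frac12}g\|_p\le cp\,\|[\Gamma(g,g)]^{\frac12}\|_p$ directly for $2<p<4$. Your proposed route---interpolate the $L_2$ identity with the $p=8$ case ``along the gradient $L_p^c(\Gamma)$-scale''---is plausible in spirit but not self-contained: you would need that the spaces with norm $\|[\Gamma(g,g)]^{\frac12}\|_p$ form a complex interpolation scale on which $A^{\frac12}$ acts boundedly into $L_p$, and that the resulting constants are linear in $p$. That is not established here (and the map $g\mapsto[\Gamma(g,g)]^{\frac12}$ is nonlinear, so naive interpolation of the inequality fails). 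If you keep this route you must either prove that interpolation identity for the $\Gamma$-Hilbert modules or, as the paper does, appeal to the existing result in \cite{JMr}.
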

\begin{proof}
By the same argument used in the proof of Theorem \ref{meyer}, we
have
\begin{eqnarray*}
  \|P_\Gamma \hat n_a(A^\frac12g)\|_{L_{p}mo(\hat\M_a)}
  \leq c\max\{\|\Gamma(g,g)\|_{L_p(\N)},\|\Gamma(g^*,g^*)\|_{L_p(\N)}\}.
 \end{eqnarray*}
 We then obtain
\begin{eqnarray*}
  \|A^\frac12g\|_{L_p(\N)}\leq cp\max\{\|\Gamma(g,g)\|_{L_p(\N)},\|\Gamma(g^*,g^*)\|_{L_p(\N)}\}.
 \end{eqnarray*}
for $p>4$ by Lemma \ref{interbr} and Proposition \ref{a.u.}. The
same inequality is proved in \cite{JMr}, Theorem 2.5.13 with
constant $cp$ for $2<p<4$.
\end{proof}
\begin{rem}
Let $\N$ be a commutative von Neumann algebra, for example
$\N=L_\infty({\Bbb R}^n)$. Let $A=\triangle$ on ${\Bbb R}^n$ and $R$
be the classical Riesz transform, i.e. $R(f)=(\cdots,\partial_i
(-\triangle)^{-\frac12}f,\cdots )$. It is well known that $R$ is
$L_p$-bounded uniformly on the dimension $n$ for $1<p<\infty$ (see
\cite{Stfree}, \cite{MeRiesz}, \cite {psr}). For $p=1$, a dimension
free weak $(1,1)$ estimate is due to Varopoulos (see
\cite{voRiesz}). It is desirable to have some results for $p=\infty$
which implies the estimate in the range $1<p<\infty$ by interpolation. Note, in
this case,
$$\|f\|_{BMO(\Gamma)}=\|R(f)\|_{BMO(\partial)}.$$
By Proposition \ref{bmo3}, we have the dimension free estimate
$$\|R(f)\|_{BMO(\partial)}\leq c\|f\|_\infty.$$
\end{rem}

\section{Interpolation}
We will prove an interpolation theorem for BMO spaces associated
with semigroups of operators. Our BMO spaces are then good endpoints
for noncommutative $L_p$ spaces.

Let $(T_t)$ be a standard semigroup on $\N$ admitting a (reversed)
standard Markov dilation $(\M_t,\pi_t, E_t)$. We say the dilation
has {\it a.u. continuous path} if there exist weakly dense subsets
$B_p$ of $L_p(\N)$ such that both $m(f)$ and $n(f)$ have a.u.
continuous path for all $2\leq p< \infty$. Here $m(f)$ and $n(f)$
are martingales given as in  (\ref{ms}) and Proposition
\ref{meyer1}.

 \begin{prop}\label{a.u.}
Suppose a standard semigroup $T_t$ satisfies $\Gamma^2\geq0$ and
admits an a.u. continuous standard (reversed) Markov dilation
$(\pi_t, \M_{t]})$. Then the martingale
$\hat{n}_a(f)=\hat{\pi}_{\ttt_a\wedge s}(Pf)$ in Meyer's model is
a.u. continuous for all $f\in L_p(\N), p>2$.
 \end{prop}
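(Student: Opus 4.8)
The plan is to reduce the a.u.\ continuity of $\hat{n}_a(f) = \hat{\pi}_{\ttt_a\wedge s}(Pf)$ to two separate continuity facts: the a.u.\ continuity of the underlying Markov dilation $(\pi_s,\M_s)$ of $T_t$ on a weakly dense subset, and the (deterministic, $L_p$-norm) continuity of the Poisson path $s\mapsto P_sf$. Recall that $\hat{\pi}_t(f(\cdot)) = \pi_t f(B_t(\cdot))$, so $\hat{n}_a(f)_s = \pi_{\ttt_a\wedge s}\big(P_{B_{\ttt_a\wedge s}}f\big)$. The first step is to choose $f$ in the weakly dense set $B_p\subset L_p(\N)$ that witnesses a.u.\ continuity of $m(f)$ and $n(f)$; by the subordination formula \eqref{ps-form} and since $T_s(\A)\subset\A$, one checks $P_s f$ stays in a controlled class, and the map $u\mapsto P_u f$ is norm-continuous (indeed smooth) on $(0,\infty)$ into $L_p(\N)$, with $P_uf\to f$ as $u\to 0^+$. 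So the only genuine source of discontinuity is the composition with the Brownian path $B$ and the stopping time $\ttt_a$.

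Next I would fix $T>0$ and $\eps>0$ and produce the exceptional projection. Work on the product $\hat\M_a = M^B\bar\otimes\M$. Since $B$ has (a.s.) continuous paths, on a set of Brownian probability $>1-\eps/2$ the path $s\mapsto B_{\ttt_a\wedge s}(\om)$ is uniformly continuous on $[0,T]$ and stays in a compact subinterval $[\delta,R]$ of $(0,\infty)$ away from $0$ up to time $\ttt_a\wedge T$ (the value $0$ is only attained at the endpoint $\ttt_a$ itself; one truncates slightly below $\ttt_a$ to avoid it, absorbing the small leftover into $\eps$). On that good Brownian set, $u\mapsto P_uf$ restricted to $[\delta,R]$ is uniformly norm-continuous, so $\om\mapsto P_{B_{\ttt_a\wedge s}(\om)}f$ is, uniformly in $s\le T$, a norm-continuous $L_p(\N)$-valued function of $s$. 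Then apply the a.u.\ continuity hypothesis for the dilation $(\pi_s,\M_s)$: there is a projection $e\in\M$ with $\tau_\N(1-e)<\eps/2$ such that $s\mapsto \pi_s(x)e$ is $\|\cdot\|$-continuous for the relevant $x$; combining $e$ with the characteristic function of the good Brownian set gives a projection $\hat e$ in $\hat\M_a$ with $(\tau_B\otimes\tau_\N)(1-\hat e)<\eps$ such that $s\mapsto \hat{n}_a(f)_s\,\hat e$ is continuous on $[0,T]$. This is exactly a.u.\ continuity of $\hat{n}_a(f)$.

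Finally one records that $B_p$ can be taken weakly dense: this is part of the hypothesis ``a.u.\ continuous Markov dilation'' in the excerpt, so the set of $f\in L_p(\N)$ for which $\hat{n}_a(f)$ is a.u.\ continuous contains a weakly dense subset for every $p>2$, which is the assertion. The main obstacle I anticipate is the careful handling of the boundary behavior of the Poisson path near $u=0$ together with the stopping time $\ttt_a$: one must ensure that the composition $s\mapsto P_{B_{\ttt_a\wedge s}}f$ does not lose continuity when $B$ approaches $0$, which is why the truncation ``stay above $\delta$ until just before $\ttt_a$'' and the estimate $\|P_uf-f\|_{L_p}\to 0$ are both needed; everything else is a routine combination of Fubini over the two filtrations and the given a.u.\ continuity of the base dilation.
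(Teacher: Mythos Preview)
The paper does not give a proof here; it simply cites Lemma~2.5.3(ii) of \cite{JMr}. So there is no line-by-line comparison to make, but your proposal has substantive gaps that prevent it from standing on its own.

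\medskip

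\textbf{The a.u.\ continuity hypothesis is not what you use.} The definition of ``a.u.\ continuous Markov dilation'' in the paper is that the specific martingales $m(f)=(\pi_s T_{v-s}f)_s$ and $n(f)=(\pi_s f+\int_0^s\pi_v(Af)dv)_s$ have a.u.\ continuous path for $f$ in weakly dense sets $B_p$. It is \emph{not} the statement that $s\mapsto\pi_s(x)e$ is operator-norm continuous for each fixed $x$, which is what you invoke. You can try to recover the latter from $n(f)$ by subtracting the integral term, but this only works for $f\in\dom(A)$ with $Af$ controlled, and even then the projection $e$ you obtain depends on $f$. Your joint-continuity step, bounding $\pi_s(P_{B_s(\omega)}f)e$ by freezing $\omega$ and treating $P_{B_s(\omega)}f$ as a continuous curve of arguments for $\pi_s$, needs a single $e$ that works simultaneously for all $x$ in the (infinite) range $\{P_uf:u\in[\delta,R]\}$; compactness of the parameter interval does not give this without a quantitative uniformity you have not supplied.

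\medskip

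\textbf{You never use $\Gamma^2\ge 0$, and you stop at a dense set.} The hypothesis $\Gamma^2\ge 0$ is part of the statement; your argument does not touch it, which is a strong signal that the intended mechanism is different. In \cite{JMr} the natural route is through the orthogonal decomposition $\hat n_a(f)=P_{br}\hat n_a(f)+P_\Gamma\hat n_a(f)$: the Brownian part is a stochastic integral $\int_0^{\ttt_a\wedge s}\pi_r(P'_{B_r}f)\,dB_r$ and has continuous path for free, while the spatial part $P_\Gamma\hat n_a(f)$ has bracket governed by $\Gamma(P_{B_r}f,P_{B_r}f)$, and it is here that $\Gamma^2\ge 0$ and the a.u.\ continuity of the base dilation martingales $m(f),n(f)$ are brought to bear. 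Finally, the proposition asserts a.u.\ continuity for \emph{all} $f\in L_p(\N)$, $p>2$; your last paragraph only concludes it on the weakly dense set $B_p$ and then declares this to be the assertion. It is not. An approximation from $B_p$ to all of $L_p$ is still required, and passing a.u.\ continuity through $L_p$-limits is itself nontrivial (one typically uses Doob's maximal inequality plus a Borel--Cantelli/Egorov argument).
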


\begin{proof} This is the second part of Lemma 2.5.3 (ii) of \cite{JMr}.
\end{proof}

We use the notation \emph{$L_p^0(\N), 1\leq p\leq\infty$} for the complemented subspace of $L_p(N)$ which is orthogonal to
 \[  ker(A_p)\lel \{f\in\dom_p(A),Af=f\} \lel
 \{f\in L_p(\N), \lim_tT_tf=f\} \pl .\]
Equivalently, $L_p^0(\N)=\{f\in
L_p(\N), \lim_{t\rightarrow \infty} T_tf=0\}$ and hence we may also view $L_p^0(\N)$ as a quotient space. The limit is taken  with respect to the
$\|\cdot\|_{L_p(\N)}$-norm for $1<p<\infty$ and is with respect to
 the weak$^*$ topology for $p=1,\infty$.
Recall from Proposition \ref{kernel} we know that
$\|\cdot\|_{bmo^c(\T)}$ and $\|\cdot\|_{BMO^c(\T)}$ are norms on the
quotient space $\N^0\cup L_2^0(\N)$. Note $Af=0$ implies $T_tf=f$
and $P_tf=f$ for all $t$, we get
$\ker(A_\infty^{\frac12})=\ker(A_\infty)=\{f,\lim_tP_tf=f\}$. So
$\|\cdot\|_{BMO_c(\P)}$ and $\|\cdot\|_{bmo^c(\P)}$ are norms on
$\N^0\cup L_2^0(\N)$ too. The same is true for
$\|\cdot\|_{BMO^c(\Gamma)}$, $\|\cdot\|_{BMO^c(\hat\Gamma)}$, and
for $\|\cdot\|_{BMO^c(\partial)}$.

\subsection{Interpolation in the finite case}
We assume that the underling von Neumann algebra $\N$ is with a
finite trace $\tau$ in this subsection. In this case, all the
BMO-norms associated with semigroups are bigger than the
$L_2(\N)$-norm up to a constant. Let $BMO(\T)$, $BMO(\P)$,
$BMO(\hat{\Gamma})$, $BMO(\Gamma), BMO(\partial)$ and $ bmo(\T),
bmo(\P)$ be the spaces of $f\in L_2^0(\N)$ with finite corresponding
BMO-norms. We consider the complex interpolation couples
$[X,L_p^0(\N)]_{\frac1q}$ with $X$ any of these BMO spaces. See
\cite{bl} for basic properties of complex interpolation method.

\begin{theorem}\label{intpol} Let $(T_t)$ be a standard semigroup of
operators. We have \begin{enumerate} \item[(i)] Assume $(T_t)$
admits a standard Markov  dilation. Then
 $$
L_{pq}^0({\mathcal N})= [X,L^0_p({\mathcal N})]_{\frac1q},$$ with
equivalence constant $\simeq pq$ for all $p\geq1, q>1$ and $X$ being
 semigroup-BMO spaces $BMO(\T)$, $BMO(\P)$,
$BMO(\hat{\Gamma}), BMO(\partial)$, and $bmo(\P)$.

\item[(ii)] Assume $(T_t)$ admits a reversed Markov dilation with  a.u. continuous
path. Then
 $$
L_{pq}^0({\mathcal N})= [bmo(\T),L^0_p({\mathcal N})]_{\frac1q},$$
equivalence constant $\simeq pq$  for all $p\geq1, q>1$. If, in
addition, $T_t$ satisfies $\Gamma^2\geq0$, we have
 $$
L_{pq}^0({\mathcal N})= [BMO(\Gamma),L^0_p({\mathcal
N})]_{\frac1q},$$ equivalence constant $\simeq pq$  for all $p\geq1,
q>1$.
\end{enumerate}
\end{theorem}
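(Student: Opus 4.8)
The plan is to deduce every case from the martingale interpolation results of Lemma~\ref{JuPe} by transporting the problem, via the Markov dilation, to a larger von Neumann algebra $\M$, using the norm comparisons collected in Sections~2--4. The inclusion $L_{pq}^0(\N)\subseteq[X,L_p^0(\N)]_{\frac1q}$ is the easy half and is uniform in all cases: each of the norms listed (for $X=BMO(\T),BMO(\P),BMO(\hat\Gamma),BMO(\partial),bmo(\P),bmo(\T),BMO(\Gamma)$) is dominated by a constant multiple of $\|\cdot\|_\infty$ on $\N^0$, since all are built from the $L_\infty$-contractions $T_t$ or $P_t$; hence $L_\infty^0(\N)\hookrightarrow X$ contractively, and applying the interpolation functor to the $L_p$-interpolation identity $L_{pq}^0(\N)=[L_\infty^0(\N),L_p^0(\N)]_{\frac1q}$ (absolute constants) gives $\|f\|_{[X,L_p^0(\N)]_{\frac1q}}\le c\,\|f\|_{L_{pq}^0(\N)}$. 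The whole content is the reverse estimate $\|f\|_{L_{pq}^0(\N)}\le c\,pq\,\|f\|_{[X,L_p^0(\N)]_{\frac1q}}$.

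I would do the prototypical case $X=bmo(\T)$ of~(ii) first. Using the reversed Markov dilation, $\iota\colon f\mapsto\pi_0(f)$ is a trace preserving $^*$-homomorphism, hence a complete isometry $L_p^0(\N)\to L_p^0(\M)$ (its image consisting of reversed martingales with vanishing terminal value), and Proposition~\ref{bmo1} gives $\|\iota(f)\|_{bmo(\M)}=\|f\|_{bmo(\T)}$ unconditionally. Since the dilation has a.u.\ continuous path, the martingale $m(f)=(\pi_sT_sf)_s$ is a.u.\ continuous for $f$ in a dense set, so Lemma~\ref{ctmar} upgrades this to $\|\iota(f)\|_{BMO(\M)}\le c\,\|f\|_{bmo(\T)}$ there; transporting an analytic family $F$ with $F(\tfrac1q)=f$ through $\iota$ and using the interpolation identity $[BMO(\M),L_p(\M)]_{\frac1q}=L_{pq}(\M)$ (a reiteration of Lemma~\ref{JuPe}, constants $\simeq pq$) yields $\|f\|_{L_{pq}(\N)}=\|\iota(f)\|_{L_{pq}(\M)}\le c\,pq\,\|f\|_{[bmo(\T),L_p^0(\N)]_{\frac1q}}$ for $f$ in that dense set, then for all $f$ by density. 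The isometry of $\iota$ on $L_{pq}$ makes the pull-back automatic here, with no complementation argument needed.

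The remaining cases run on the same template with a different embedding. For $BMO(\Gamma)$ under $\Gamma^2\ge0$ and an a.u.\ continuous dilation, use Meyer's model: Lemma~\ref{bmo2}(iii) together with Lemma~\ref{lem1} identifies $\|f\|_{BMO^c(\Gamma)}$ with $\|P_\Gamma\hat n_a(f)\|_{bmo^c(\hat{\M}_a)}$ up to absolute constants, Proposition~\ref{a.u.} gives a.u.\ continuity of $\hat n_a(f)$ and hence of $P_\Gamma\hat n_a(f)$ (its Brownian component $P_{br}\hat n_a(f)$ being continuous by Lemma~\ref{bmo2}(ii)), so Lemma~\ref{ctmar} again passes to $BMO^c(\hat{\M}_a)$; the row part is handled by applying this to $f^*$, and since $\iota$ is no longer isometric on $L$-spaces the pull-back uses the complemented decomposition of Lemma~\ref{interbr} (equation~\eqref{eqbr1}, splitting off the mean part $P_af$). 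For $X=BMO(\P),bmo(\P),BMO(\hat\Gamma),BMO(\partial)$ in the standard-dilation case of~(i), Theorem~\ref{last}, Theorem~\ref{lem0}, Proposition~\ref{bmo3} and Lemma~\ref{lem1} reduce each of these norms (invoking $\Gamma^2\ge0$ only where those statements do) to $\|\hat n_a(f)\|_{bmo(\hat{\M}_a)}$ or to its $P_{br}$-part, whose relevant Brownian component is continuous for free, so no extra a.u.-continuity hypothesis on the dilation is needed; Lemma~\ref{interbr} (equation~\eqref{eqbr}) then supplies the pull-back exactly as before.

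The main obstacle is precisely the step from the column/row $bmo$-data that the dilation identities (Proposition~\ref{bmo1}, Lemma~\ref{bmo2}) deliver directly, to the full martingale space $BMO(\M)=bmo(\M)\cap(h_1^d(\M))^*$ to which Lemma~\ref{JuPe} applies --- one must annihilate the diagonal $h_1^d$-contribution (equivalently, know $\|\cdot\|_{h_p^d}=0$, or $\|\cdot\|_{h_p}\simeq\|\cdot\|_{L_p}$), and this is exactly what a.u.\ continuity of the dilation buys (Lemma~\ref{ctmar}, Proposition~\ref{a.u.}), which is why~(ii) carries that hypothesis whereas in~(i) one exploits that the Brownian part of Meyer's martingale is continuous automatically. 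A secondary but real difficulty is the bookkeeping: keeping track of which equivalences of Sections~2--4 are unconditional and which require $\Gamma^2\ge0$, so that the two lists of admissible $X$ come out exactly as stated, and carrying the constant $\simeq pq$ through the reiteration of Lemma~\ref{JuPe}.
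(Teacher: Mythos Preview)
Your proposal is correct and follows essentially the same strategy as the paper: embed the semigroup BMO space into a martingale $BMO(\M)$ via the dilation (using Proposition~\ref{bmo1} for $bmo(\T)$, Lemma~\ref{bmo2} for the Meyer-model spaces), upgrade $bmo$ to $BMO$ via Lemma~\ref{ctmar} using a.u.\ continuity, apply Lemma~\ref{JuPe}, and pull back. The one organizational difference worth noting is that for part~(i) the paper is more economical: it proves the hard inclusion \emph{only} for $X=BMO(\partial)$ (via $P_{br}\pi_{\tau_a}$, whose Brownian continuity is automatic), and then observes that for every other $X$ in the list one has the unconditional inclusion $\N^0\subset X\subset BMO(\partial)$ (from Theorem~\ref{lem0} and Proposition~\ref{bmo3}(ii)), so $[X,L_p^0]_{1/q}\subset[BMO(\partial),L_p^0]_{1/q}=L_{pq}^0$ follows at once. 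This sidesteps entirely the need to reduce each norm individually to a martingale norm and avoids your caveat about ``invoking $\Gamma^2\ge0$ only where those statements do''---in particular it never touches Theorem~\ref{last}, and it never needs the full $\|\hat n_a(f)\|_{bmo(\hat\M_a)}$ (which would indeed require a.u.\ continuity you do not have in~(i)). For the pull-back in~(i) the paper invokes Lemma~2.5.11 of \cite{JMr} directly ($\|x\|_{pq}\le 2\lim_{a\to\infty}\|P_{br}\pi_{\tau_a}x\|_{pq}$) rather than Lemma~\ref{interbr}, which is cleaner since it avoids the residual $\|P_af\|$ term.
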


\begin{proof} For any choice of $X$, note that the trivial inclusion
${\mathcal N}^0\subset X$ implies
 \[ L_{pq}^0({\mathcal N})\subset [X,L_p^0({\mathcal N})]_{\frac{1}{q}} \pl .\]
Assume a Markov dilation exists, for $X=BMO(\partial)$, we consider
Meyer's model in section 4. Note $(\hat
n_a(x))_0=\hat{E}_0\pi_{\ttt_a}(x)=\pi_0P_ax$ for all $x\in
L_2(\N)\supseteq X$. According to Lemma \ref{bmo2} (ii), we get that
$P_{br}\pi_{\tau_a}$ embeds $BMO(\partial)$ into $BMO(\hat{\M}_a)$.
Thus $P_{br}\pi_{\tau_a}$ embeds
$[BMO(\partial),L_p^0(\N)]_{\frac1q}$ into $
[BMO(\hat\M_a),L_p(\hat\M_a)]_{\frac1q}$ because it embeds $L_p(\N)$
into $L_p(\hat\M_a)$ too. Note
 \[ [BMO(\hat\M_a),L_p(\hat\M_a)]_{\frac1q}\lel L_{pq}(\hat\M_a)\]
with equivalence constant $\simeq pq$ by Lemma \ref{JuPe}. We deduce
that,
$$\|P_{br}\pi_{\tau_a}x\|_{pq}\leq cpq \|x\|_{[BMO(\partial),L_p^0(\N)]_{\frac1q}} $$
holds for all $x\in [BMO(\partial), L_p^0(\N)]_{\frac1q}$. By Lemma
2.5.11 of \cite{JMr}, we have (note $\rho_a$ there denotes
$\pi_{\tau_a}$ and $Pr$ is the projection to $L_p^0(\N)$)
$$\|x\|_{pq}\leq 2\lim_{a\rightarrow \infty}\|P_{br}\pi_{\tau_a}x\|_{pq}\leq cpq \|x\|_{[BMO(\partial),L_p^0(\N)]_{\frac1q}}.$$
We obtain the desired result for $X=BMO(\partial)$.
 For $X=BMO(\P), bmo(\P), BMO(\hat\Gamma), BMO(\T)$, the interpolation result follows from the relation
that $\N^0\subset X\subset BMO(\partial)$ because of Theorem
\ref{lem0} and Proposition \ref{bmo3} (ii).

We now prove (ii). Assume the admitted Markov dilation has a.u.$\!$
continuous path. By Proposition \ref{bmo1}, we see that $\pi_0$
embeds $bmo(\T)$ into $bmo(\M)$. Now, for any $x\in bmo(\T)\subset
L_2(\N)$, we can find a net $x_\la\in L_2(\N)\cap \N$ converging to
$x$ in $L_2(\N)$. So $\pi_0(x_\la)\in L_2(\M)\cap \M$ converging to
$\pi_0(x)$ in $L_2(\M)$. By Lemma \ref{ctmar}, $\pi_0(x)\in BMO(\M)$
and $\|\pi_0(x)\|_{BMO(\M)}\leq c\|x\|_{bmo(\M)}$. Therefore,
$\pi_0$ embeds $bmo(\T)$ into $BMO(\M)$. By the same argument used
for the proof of (i), we obtain the desired result.

We now turn to $BMO(\Gamma)$,  Lemma \ref{bmo2} (iii) implies that
$P_{\Gamma}\pi_{\tau_a}$ embeds $BMO(\Gamma)$ into $bmo(\hat\M_a)$.
Note that Proposition \ref{a.u.} implies the a.u. continuity of
$\hat n_a(x)=(\hat E_t\pi_{\tau_a}x)_t$ for all $x\in L_{2}(\N)\cap
\N$ assuming $\Gamma^2\gl 0$.  Then $P_{\Gamma}\pi_{\tau_a}$ embeds
 $BMO(\Gamma)$ into $BMO(\hat\M_a)$ by Lemma \ref{ctmar} and the argument used for (ii).
 Repeat
 the argument used for the proof of (i), we obtain (iii).
\end{proof}

\begin{rem} According to \cite{JRS} we have a Markov dilation for finite von Neumann algebras. Hence $BMO(\partial)$ solves  problem \eqref{mainpb} in this case.
\end{rem}

As a consequence, we obtain the boundedness of Fourier multiplier
 $M_a$ discussed in Section 3.
\begin{cor}\label{Lp} Let  $(T_t)$ be a standard semigroup admitting a Markov
dilation. Let $M_a$ be as in Section 2. Then
\begin{eqnarray}
\|M_af\|_{L_p(\N)}&\leq& c_p\|f\|_{L_p(\N)},
\end{eqnarray}
with $c_p$ in order of $\simeq \max\{p,\frac1{p-1}\}$. In
particular, for $M_a=L^{is}$, we have \begin{eqnarray}\label{Lises}
\|L^{is}f\|_{L_p(\N)}\leq c_{s,p}\|f\|_{L_p(\N)}, \end{eqnarray}
with
$$c_{s,p}\simeq\max\{p,\frac1{p-1}\}|s|^{-|\frac12-\frac1p|}\exp(|\frac
{\pi s}2-\frac{\pi s}p|).$$
\end{cor}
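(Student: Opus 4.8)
The plan is to interpolate the $L_\infty$--$BMO$ estimate of Section 3 against the trivial $L_2$ bound, invoking Theorem \ref{intpol}, and then to read off the explicit constants in the Stein case from the asymptotics of the Gamma function.

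First note that $M_a=M_{a,1}$ annihilates the fixed-point space $ker(A)$, since $\partial_tP_tf\equiv 0$ whenever $P_tf\equiv f$; composing with the contractive ergodic projection of $L_p(\N)$ onto $L_p^0(\N)$ we may work on $L_p^0(\N)$ and have $\|M_a\|_{L_p(\N)\to L_p(\N)}=\|M_a\|_{L_p^0(\N)\to L_p^0(\N)}$. By the first inequality of Theorem \ref{mul1} with $n=1$ (which needs no curvature hypothesis) $\|M_a\|_{BMO^c(\partial)\to BMO^c(\partial)}\le cc_a$; applying this to $(M_af)^*=M_{\bar a}(f^*)$ and using $c_{\bar a}=c_a$ gives the same bound for the row norm, so $\|M_a\|_{BMO(\partial)\to BMO(\partial)}\le cc_a$ on the two-sided space. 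On the other hand $M_a=F(A^{1/2})$ with $F(\la)=-\int_0^\infty a(t)\la e^{-t\la}\,dt$, and \eqref{ca} makes $F$ bounded with $\|F\|_\infty\le cc_a$, hence $\|M_a\|_{L_2^0(\N)\to L_2^0(\N)}\le cc_a$. Since a Markov dilation of $(T_t)$ induces a standard Markov dilation of the Poisson semigroup $(P_t)$, the $BMO(\partial)$--$L_2^0$ case of Theorem \ref{intpol}(i) applies (its proof via Meyer's model uses only a standard dilation of $P_t$); interpolating the two estimates, and using that the equivalence constant there is of order $pq$, we obtain for $p=2q\ge 2$
\[ \|M_af\|_{L_p(\N)}\kl c\,p\,c_a\,\|f\|_{L_p(\N)}\pl. \]
Since $T_t$, hence $\partial_tP_t$, is trace-symmetric, $M_a$ is symmetric for the bilinear trace pairing, so $\|M_a\|_{L_p(\N)\to L_p(\N)}=\|M_a\|_{L_{p'}(\N)\to L_{p'}(\N)}$ and the range $1<p<2$ follows from the case $2<p'<\infty$. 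Altogether $c_p\simeq\max\{p,\tfrac1{p-1}\}c_a$.

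For the Stein multiplier we take $a$ as in Example \ref{stein} (so $a(t)=t^{-is}$ up to normalization), whence $M_a=\Gamma(1-is)\,L^{is}$ up to a universal constant; here $|a|\equiv 1$ gives $c_a=1$ and $L^{is}$ is an isometry of $L_2^0(\N)$, so $\|L^{is}\|_{BMO(\partial)\to BMO(\partial)}\le c\,|\Gamma(1-is)|^{-1}$ and $\|L^{is}\|_{L_2^0(\N)\to L_2^0(\N)}=1$. Feeding these into the interpolation with the identity $|\Gamma(1-is)|^2=\pi s/\sinh(\pi s)$ (equivalently $|\Gamma(1-is)|\simeq|s|^{1/2}e^{-\pi|s|/2}$ for $|s|\ge 1$) yields, for $p\ge 2$,
\[ \|L^{is}f\|_{L_p(\N)}\kl c\,p\,|s|^{-(\frac12-\frac1p)}\exp\!\big(\tfrac{\pi|s|}{2}-\tfrac{\pi|s|}{p}\big)\|f\|_{L_p(\N)}\pl, \]
and $1<p<2$ follows again by the symmetry above; together these give the asserted $c_{s,p}$.

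The step I expect to be the main obstacle is the precise bookkeeping of constants. One has to check that the equivalence constant in Theorem \ref{intpol} is genuinely $O(pq)$, so that forming the geometric mean $C_0^{1-\theta}C_1^{\theta}$ of the two endpoint norms with $\theta=2/p$ still lands at the orders $\max\{p,\tfrac1{p-1}\}$ in $p$ and, in the Stein case, at the power $|\tfrac12-\tfrac1p|$ and exponential rate $\tfrac{\pi|s|}{2}-\tfrac{\pi|s|}{p}$ in $|s|$ (rather than at something worse); a secondary point is to record carefully that $M_a$ maps into the two-sided $BMO(\partial)$ and is bounded on $L_2^0(\N)$, both of which follow from $c_{\bar a}=c_a$ and \eqref{ca}.
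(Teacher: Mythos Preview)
Your argument is correct and follows essentially the same route as the paper: bound $M_a$ on $BMO(\partial)$ via Theorem \ref{mul1}, use the trivial $L_2$ bound, interpolate by Theorem \ref{intpol}, and pass to $1<p<2$ by duality; for $L^{is}$ you then insert the Gamma-function asymptotics exactly as the paper does. You are in fact more explicit than the paper on several points it leaves implicit---the derivation of $\|M_a\|_{L_2\to L_2}\le cc_a$ from \eqref{ca}, the row estimate via $(M_af)^*=M_{\bar a}(f^*)$, and the passage through the fixed-point projection---so nothing is missing; the only soft spot is your parenthetical claim that Meyer's model needs only a standard dilation of $P_t$, whereas as written in the paper it is built from a standard dilation of $T_t$ (the paper glosses over the same issue, simply citing Theorem \ref{intpol}).
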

\begin{proof}
Apply Theorem \ref{mul1} and Theorem \ref{intpol} to $M_a$ and their
adjoint operators, we have for $f\in L_p^0({\mathcal N})$,
$$\|M_af\|_{L_p(\N)}\leq c\max\{p,\frac1{p-1}\}\|f\|_{L_p(\N)}
$$
for all $1<p<\infty$. Since $M_a$'s vanish on $f$ with $\lim_t
T_tf=f$, they are bounded on the whole $L_p(\N)$. For $M_a=L^{is}$,
we have \begin{eqnarray*} \|L^{is}f\|_{L_2(\N)}&\leq&
\|f\|_{L_p(\N)},\\
\|L^{is}f\|_{BMO(\partial)}&\leq&
c\Gamma(1-is)^{-1}\|f\|_{BMO(\partial)}.
\end{eqnarray*}
By interpolation, we have, for all $1<p<\infty$,
\begin{eqnarray*} \|L^{is}f\|_{L_p(\N)}\leq c\max\{p,\frac1{p-1}\}\Gamma(1-is)^{-|1-\frac 2p|}
\|f\|_{L_p(\N)}.
\end{eqnarray*}
It is well known that,  e.g. see page 151 of \cite{Ti},
\begin{eqnarray}\label{Gaes} |\Gamma(1-is)|\simeq |s|^\frac12e^{-\frac
{\pi|s|}2}.\end{eqnarray} Therefore, we conclude,
\begin{eqnarray*}
\|L^{is}f\|_{L_p(\N)}\leq
c\max\{p,\frac1{p-1}\}|s|^{-|\frac12-\frac1p|}e^{|\frac{\pi
s}2-\frac{\pi s}p|} \|f\|_{L_p(\N)}.
\end{eqnarray*}
\end{proof}
\begin{rem}
It is known that standard semigroups $(T_t)$ on von Neumann algebras
$VN(G)$ of a discrete group always admit a Markov dilation (see
\cite{Ri}). Moreover, a recent result of Junge/Ricard /Shlyakhtenko
(see \cite{JRS}) shows that standard semigroups $(T_t)$ on any
finite von Neumann algebras admits a Markov dilation and for the bounded generators $A_t=t^{-1}(I-T_t)$ the Markov dilations also has  almost uniformly continuous path.
\end{rem}

\begin{rem} \label{better}The $L_p$-boundedness of Fourier multipliers $M_a$ could be proved directly  following
E. Stein's Littlewood-paley $g$-function technique (see
\cite{stein-erg}) by the noncommutative $H_p$ theory developed in
\cite{JLX}, with worse constants. It could be also obtained
following a classical argument of M. Cowling (see \cite{Cow})
through `transference technique' in the noncommutative setting,
which could become available after \cite{JRS}. However,
`transference technique' does not seem to work for $BMO$. Cowling
did obtain optimal $L_p$-boundedness constants for the imaginary
powers $L^{is}$ on abelian groups, although  our method provides a
slightly better estimate on $s$ as $s\rightarrow \infty$ (see
(\ref{Lises}) ). But Cowling did not have optimal $L_p$-boundedness
constants for general multipliers $M_a$'s (he had $\simeq
\max\{p^\frac52,(p-1)^{-\frac52}\}$, see Theorem 3 of \cite{Cow}).
\end{rem}

As another application,  we obtain optimal constants for the
 noncommutative maximal ergodic inequality proved by Junge and Xu
 (see Theorem 5.1 and Corollary 5.11 of \cite{JX3}).

\begin{cor} Suppose $(T_t)$ is a standard semigroup admitting a
Markov dilation, then
\begin{eqnarray}
\|\sup_tT_tf\|_{L_p(\N)}\leq
c\max\{1,\frac1{(p-1)^2}\}\|f\|_{L_p(\N)}.
\end{eqnarray}
\end{cor}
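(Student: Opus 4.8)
The quantity $\|\sup_tT_tf\|_{L_p(\N)}$ is, by definition, the norm of $f$ under the \emph{linear} map $\Phi\colon f\mapsto (T_tf)_{t>0}$ with values in the vector--valued space $L_p(\N;\ell_\infty)$ of \cite{JD}, \cite{JX3}, so the assertion is $\|\Phi\|_{L_p^0(\N)\to L_p(\N;\ell_\infty)}\le c\max\{1,(p-1)^{-2}\}$. On $\ker(A_p)$ one has $T_tf=f$ for every $t$, hence $\sup_t^+T_tf=f$; since the projection onto $\ker(A_p)$ is contractive, it suffices to treat $f\in L_p^0(\N)$. The plan is to interpolate $\Phi$, following the scheme of the proof of Corollary \ref{Lp}, between two elementary endpoints, treating $p\ge2$ and $1<p<2$ separately.

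The two endpoint estimates are: (a) at $p=\infty$, each $T_t$ is a unital completely positive map, so $\Phi$ is a complete contraction $\N\to L_\infty(\N;\ell_\infty)$; (b) at $p=2$, $\Phi$ is bounded $L_2^0(\N)\to L_2(\N;\ell_\infty)$ with an absolute constant. For (b) one invokes the assumed Markov dilation: the family $m_s(f)=\pi_s(T_sf)$ is a (reversed) martingale on $\M$ equal to $E_{[s}(\pi_0f)$ (cf.\ Proposition \ref{bmo1}), and the noncommutative Doob inequality in $L_2$, whose constant is absolute, bounds $\sup_s^+E_{[s}(\pi_0f)$ by $\|\pi_0f\|_2=\|f\|_2$; since each $\pi_s$ is a trace preserving $^*$--homomorphism, hence isometric on every $L_q(\N)$, this transfers to the desired bound on $\|\sup_t^+T_tf\|_2$, the one technical point being that the embeddings $\pi_s$ vary with $s$, which is dealt with as in \cite{JX3} and \cite{JMr}.

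For $2\le p\le\infty$ one interpolates $\Phi$ between (a) and (b): since $L_p(\N;\ell_\infty)=[L_\infty(\N;\ell_\infty),L_2(\N;\ell_\infty)]_{2/p}$ with uniformly bounded interpolation constants, one obtains $\|\Phi\|_{L_p^0\to L_p(\N;\ell_\infty)}\le c$ with $c$ absolute --- exactly the range where $\max\{1,(p-1)^{-2}\}=1$. For $1<p<2$ one argues by duality, replacing $\Phi$ by its adjoint $\Phi^{*}\colon L_{p'}(\N;\ell_1)\to L_{p'}^0(\N)$, $(g_t)_t\mapsto\int_0^\infty T_tg_t\,d\mu(t)$, with $p'>2$; $\Phi^{*}$ is bounded at the $L_2$ level with absolute constant (dual of (b)) and at the $BMO$ level with absolute constant (this is essentially the very definition of the $BMO^{c}(\T)$ norm, read on the dual side). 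Interpolating these, Theorem \ref{intpol} contributes a factor $\simeq p'$ on the target through $[BMO(\T),L_2^0(\N)]_{2/p'}=L_{p'}^0(\N)$, while reaching the $\ell_1$--valued space $L_{p'}(\N;\ell_1)$ from its endpoints costs a further factor $\simeq p'$; hence $\|\Phi^{*}\|\lesssim p'^{\,2}\simeq(p-1)^{-2}$, and dualizing back proves the case $1<p<2$.

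The main obstacle is endpoint (b) together with its dual: passing the Markov dilation through the vector--valued ($\ell_\infty$-- and $\ell_1$--valued) maximal norms is the genuinely delicate step in this circle of ideas, because a factorization witnessing the maximal norm in $\N$ does not pass termwise through the $s$--dependent embeddings $\pi_s$. A second point requiring care is the constant bookkeeping for $1<p<2$: one must verify that the interpolation on the $BMO(\T)$ side and the interpolation of the $\ell_1$--valued $L_{p'}$ scale each lose only a single power of $p'$, so that their product is the sharp $(p-1)^{-2}$ and not something worse --- this is precisely where working with the intrinsic semigroup space $BMO(\T)$, rather than an ad hoc substitute, pays off.
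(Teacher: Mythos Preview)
Your approach is quite different from the paper's and, for the crucial range $1<p<2$, it has a genuine gap.

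For $p\ge 2$ your argument is essentially fine: the $L_\infty$ endpoint is trivial, the $L_2$ endpoint follows already from \cite{JX3} (their constant at $p=2$ is absolute), and the interpolation of the spaces $L_p(\N;\ell_\infty)$ is available. In fact this range is not the issue at all --- the Junge--Xu constant $c(p/(p-1))^2$ is already bounded for $p\ge 2$.

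The problem is your treatment of $1<p<2$. You assert that $\Phi^*$ is bounded ``at the $BMO$ level'' and that this ``is essentially the very definition of the $BMO^c(\T)$ norm, read on the dual side''. This is not so. Concretely, you would need an estimate of the form
\[
\Big\|\textstyle\sum_t T_t g_t\Big\|_{BMO(\T)} \;\le\; C\,\|(g_t)\|_{Y}
\]
for some $\ell_1$-type space $Y$ sitting at the $p'=\infty$ endpoint, and there is no reason for this to hold: the $BMO^c(\T)$ norm of $\sum_t T_t g_t$ involves $\sup_s\|T_s|\sum_t(T_tg_t-T_{s+t}g_t)|^2\|$, which does not collapse to anything controlled by $\|\sum_t|g_t|\|_\infty$ or any natural variant. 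The map $\Phi^*$ is an \emph{averaging} operator, and averaging operators are typically not bounded into $BMO$ uniformly over the weights; nothing in the definition of $BMO^c(\T)$ provides such a bound. Equivalently on the predual side, there is no evident reason why $\Phi$ should send $h_1(\T)$ into $L_1(\N;\ell_\infty)$. Without this endpoint you cannot run the interpolation, and the factor $(p-1)^{-2}$ is exactly what is at stake.

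The paper's proof takes a completely different route, due to Cowling: one does not interpolate $\Phi$ at all. Instead one writes, via the scalar identity $\frac{1}{\pi}\int_0^\infty \lambda^{is}\Gamma(1-is)(1+is)^{-1}\,ds=e^{-\lambda}-\int_0^1 e^{-u\lambda}\,du$ and functional calculus,
\[
T_t-\frac{1}{t}\int_0^t T_v\,dv \;=\; \frac{1}{\pi}\int_0^\infty (tA)^{is}\,\Gamma(1-is)(1+is)^{-1}\,ds,
\]
so that $T_tf$ is the sum of the ergodic average $M_tf=\frac{1}{t}\int_0^t T_vf\,dv$ and an integral of imaginary powers. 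The maximal inequality for $M_t$ is Theorem 4.1/4.5 of \cite{JX3} with constant $\simeq\max\{p,(p-1)^{-2}\}$. For the difference term one uses the $L_p$ bound for $A^{is}$ from Corollary \ref{Lp} --- this is where the $BMO$ interpolation Theorem \ref{intpol} enters, giving $\|A^{is}\|_{p\to p}\lesssim \max\{p,(p-1)^{-1}\}\cdot|\Gamma(1-is)|^{-|1-2/p|}$; the Gamma factor makes the $s$-integral absolutely convergent. Pairing against $a\in L_q(\N_+,L_1(0,\infty))$ (the duality description of the maximal norm for positive $f$) then yields $\|\sup_t T_tf\|_p\lesssim \max\{p^2,(p-1)^{-2}\}\|f\|_p$, and a final interpolation with the trivial $L_\infty$ bound removes the growth for large $p$.

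In short: the $BMO$ machinery enters the paper's proof only \emph{indirectly}, through the $L_p$ bounds for the imaginary powers $A^{is}$, not through any $BMO$ endpoint for the maximal operator itself.
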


\begin{proof} The proof is to write $T_t-\frac1t\int_0^tT_vdv$ as an weighted average of $L^{is}$ for each $t$ as Cowling did (see \cite{Cow})
 and use the uniform estimate of $L_p(\N)$-boundedness of
$L^{is}$. From the elementary identity
$$\frac1\pi\int_0^{+\infty}
\lambda^{is}\Gamma(1-is)(1+is)^{-1}ds=e^{-\lambda}-
\int_0^1e^{-u\lambda}du,$$ we deduce by functional calculus that
\begin{eqnarray}\label{id}
\frac1\pi\int_0^{+\infty}
(tL)^{is}\Gamma(1-is)(1+is)^{-1}ds=e^{-tL}-\int_0^1e^{-utL}du=T_t-\frac1t\int_0^tT_{v}dv.
\end{eqnarray}
Theorem 4.1 and Theorem 4.5 of \cite{JX3} imply that
\begin{eqnarray}\label{Mt}
\|\sup_t\frac1t\int_0^tT_{v}fdv\|_{L_p(\N)}\leq c
\max\{p,\frac1{(p-1)^2}\}\|f\|_{L_p(\N)}.\end{eqnarray} On the other
hand, for any $a\in L_q(\N_+,L_1(0,\infty))$, $\frac1p+\frac1q=1$,
we have
\begin{eqnarray}
&&|\tau \int_0^\infty a(t)\int_0^{+\infty}
(tL)^{is}(f)\Gamma(1-is)(1+is)^{-1}dsdt|\nonumber\\
&=&|\tau \int_0^{+\infty}\int_0^\infty a(t)t^{is}dt
L^{is}(f)\Gamma(1-is)(1+is)^{-1}ds|\nonumber\\
&\leq&\sup_s\|\int_0^\infty a(t)t^{is}dt\|_{L_q(\N)}\int_0^{+\infty}
\|L^{is}(f)\|_{L_p(\N)}|\Gamma(1-is)(1+is)^{-1}|ds. \label{Tt-Mt}
\end{eqnarray}
A combination of (\ref{Lises}), (\ref{Gaes}),  (\ref{id}), and
(\ref{Tt-Mt}) implies that
\begin{eqnarray}\label{5.9}
|\tau \int_0^\infty a(t)(T_t-\frac1t\int_0^tT_vdv)dt |\leq
c\max\{p,\frac1{p-1}\}\|\int_0^\infty
a(t)dt\|_{L_q(\N)}\|f\|_{L_p(\N)}.
\end{eqnarray}
Without loss of generality, assume $f\geq0$. We deduce by duality
(see Proposition 2.1 (iii) of \cite{JX3}) that,
\begin{eqnarray*}
&&\|\sup_t T_tf\|_{L_p(\N)}\\
&\leq& \sup_{a\in L_q(\N_+,L_1(0,\infty)),
\|a\|\leq 1}\tau \int_0^\infty a(t) T_tfdt,\nonumber\\
&=&\sup_{a\in L_q(\N_+,L_1(0,\infty)), \|a\|\leq 1}\tau
\int_0^\infty a(t)
(T_tf-\frac1t\int_0^tT_vfdv+\frac1t\int_0^tT_vfdv)dt,\nonumber\\
&\leq&\sup_{a\in L_q(\N_+,L_1(0,\infty)), \|a\|\leq 1}\tau
\int_0^\infty a(t)
(T_tf-\frac1t\int_0^tT_vfdv)dt+\|\sup_t\frac1t\int_0^tT_vfdvdt\|_{L_p(\N)}.
\end{eqnarray*}
By (\ref{Mt}) and (\ref{5.9}) we obtain,
\begin{eqnarray*}
\|\sup_t T_tf\|_{L_p(\N)}\leq
c\max\{p^2,\frac1{(p-1)^2}\}\|f\|_{L_p(\N)}.
\end{eqnarray*}
 Note
\begin{eqnarray*}
\|\sup_t T_tf\|_{L_\infty(\N)}\leq
\|f\|_{L_\infty(\N)}.\end{eqnarray*} Apply the interpolation result
of Theorem 3.1 of \cite{JX3}, we obtain
\begin{eqnarray*}
\|\sup_t T_tf\|_{L_p(\N)}&\leq&
c\max\{1,\frac1{(p-1)^2}\}\|f\|_{L_p(\N)},
\end{eqnarray*}for all $1<p<\infty$.
\end{proof}

\subsection{Interpolation in the semifinite case}${\atop }$

We will extend Theorem \ref{intpol} to the case that the underling
von Neumann algebras $\N$ is semifinite. In this case, $BMO$ is no
longer a subspace of $L_2$. To study the interpolation result, we
first have to obtain a larger space that the interpolation couple
$BMO,L_p$ belongs to.

\subsubsection{$L_p$-Hilbert module} We will need the following definition and lemma of $L_p$-Hilbert  module due to Junge/Sherman (see
\cite{JS}). For $p=\infty$ these spaces are well-known through the
GNS construction for a completely positive map (see Paschke \cite{Pa} and Lance, \cite{La}, Corollary 6.3).

\begin{defi}
\label{module} Let $\M$ be a semifinite von Neumann algebra. Let $E$
be an $\mathcal{M}$ right module with an $L_{\frac
p2}(\mathcal{M})$-valued inner product $\langle \cdot ,\cdot \rangle
$.  A (right) Hilbert $L_p(\mathcal{M})$ $(1\leq p<\infty )$ module,
denoted by $L_p^c(E)$, is the completion of $E$ with respect to the
norm $||\cdot ||=\|\langle \cdot ,\cdot \rangle \|_{L^{\frac
p2}(\mathcal{M})}^{\frac 12}.$ A (right) Hilbert $L_\infty
(\mathcal{M})$ module, denoted by $L_\infty^c(E)$ is the completion
of $E$ with respect to the strong operator topology, briefly STOP
topology. The STOP topology is induced by the family of seminorms
$\| x\|_\xi=\tau (\xi\langle x ,x \rangle )]^{\frac 12}$.
\end{defi}

Here is an easy proposition which we will use frequently.
\begin{prop}
\label{converge} Suppose $(L_\infty^c(E),\langle\cdot,\cdot\rangle)$
is a Hilbert $L_\infty(\M)$-module. Suppose a net $x_\la \in \M$
converges to $x\in L_\infty(E)$ in the STOP topology. Then $\langle
x_\la,x_\la\rangle$ weak$^*$ converges in $\M$. We denote the limit
by $\langle x,x\rangle$.
\end{prop}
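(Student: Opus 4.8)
The plan is to show that the net $\langle x_\la, x_\la\rangle$ is bounded in $\M$ and that it is weak$^*$-Cauchy along the net, so that a limit exists; then identify the limit in a way that is independent of the approximating net. First I would note that STOP-convergence of $x_\la$ to $x$ means $\tau(\xi\langle x_\la - x_\mu, x_\la - x_\mu\rangle)\to 0$ for every $\xi\in L_1(\M)_+$ along the net (recall $L_\infty^c(E)$ is the STOP-completion of $\M$, so $x$ is represented by a STOP-Cauchy net and the differences go to $0$ in each seminorm $\|\cdot\|_\xi$). In particular, fixing $\xi$ with $\tau(\xi)\le 1$, the scalars $\|x_\la\|_\xi = \tau(\xi\langle x_\la,x_\la\rangle)^{1/2}$ form a Cauchy net of reals, hence are bounded; taking $\xi$ ranging over a separating family and using uniform boundedness (the module norm $\|x_\la\| = \sup_{\tau(\xi)\le 1}\|x_\la\|_\xi$ of a STOP-convergent net is bounded because STOP-Cauchy nets are norm-bounded by definition of the completion) gives $\sup_\la \|\langle x_\la,x_\la\rangle\|_\M < \infty$.

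Next, to get weak$^*$-convergence in $\M$, it suffices to test against a weak$^*$-dense set of functionals, namely the normal ones, i.e. against $\xi\in L_1(\M)$. Writing $\xi = \xi_1 - \xi_2 + i(\xi_3-\xi_4)$ with $\xi_j\in L_1(\M)_+$, it is enough to treat $\xi\ge 0$. Then the polarization-type estimate
\[
|\tau(\xi\langle x_\la,x_\la\rangle) - \tau(\xi\langle x_\mu,x_\mu\rangle)|
\le \tau(\xi\langle x_\la - x_\mu, x_\la - x_\mu\rangle)^{1/2}\bigl(\|x_\la\|_\xi + \|x_\mu\|_\xi\bigr),
\]
obtained from the Cauchy--Schwarz inequality for the $L_{p/2}$-valued inner product together with the triangle inequality for $\|\cdot\|_\xi$, shows the right-hand side tends to $0$ since the first factor is $\|x_\la - x_\mu\|_\xi \to 0$ and the second factor is bounded by the previous paragraph. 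Hence $\tau(\xi\langle x_\la,x_\la\rangle)$ is a Cauchy net of scalars for each normal $\xi$, and by the boundedness of $\langle x_\la,x_\la\rangle$ in $\M$ together with weak$^*$-compactness of bounded sets, the net $\langle x_\la,x_\la\rangle$ has a weak$^*$ limit $L\in\M$ with $\tau(\xi L) = \lim_\la \tau(\xi\langle x_\la,x_\la\rangle)$ for all normal $\xi$; this $L$ is unique and we set $\langle x,x\rangle := L$.

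Finally I would check this is well-defined, i.e. independent of the approximating net: if $y_\mu\to x$ as well, interleave the two nets into a single net still STOP-converging to $x$ (or directly estimate $|\tau(\xi\langle x_\la,x_\la\rangle) - \tau(\xi\langle y_\mu,y_\mu\rangle)|$ via the same Cauchy--Schwarz bound with $\|x_\la - y_\mu\|_\xi\to 0$), forcing the two limits to agree. The main obstacle here is essentially bookkeeping rather than depth: one must be careful that STOP-convergence only controls the seminorms $\|\cdot\|_\xi$ and not the module norm directly, so the uniform boundedness of $\langle x_\la,x_\la\rangle$ in $\M$ has to be extracted from the definition of the STOP-completion (a convergent net is, in particular, a bounded net in $L_\infty^c(E)$), after which the Cauchy--Schwarz estimate does all the real work. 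Since this is the "easy proposition" advertised before the statement, no deeper input is needed.
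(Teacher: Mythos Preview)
The paper does not actually supply a proof of this proposition: it is introduced as ``an easy proposition which we will use frequently'' and the text moves on immediately. So there is nothing to compare against beyond the bare assertion that the argument is routine. Your approach---use the positive semi-definite sesquilinear form $(a,b)\mapsto \tau(\xi\langle a,b\rangle)$ for $\xi\ge 0$, apply Cauchy--Schwarz to get
\[
|\tau(\xi\langle x_\la,x_\la\rangle)-\tau(\xi\langle x_\mu,x_\mu\rangle)|\le \|x_\la-x_\mu\|_\xi(\|x_\la\|_\xi+\|x_\mu\|_\xi),
\]
and conclude that $\tau(\xi\langle x_\la,x_\la\rangle)$ is Cauchy for every normal $\xi$---is exactly the natural argument and is correct.

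One point deserves more care: your justification of norm-boundedness. You assert that ``STOP-Cauchy nets are norm-bounded by definition of the completion'' and that ``a convergent net is, in particular, a bounded net in $L_\infty^c(E)$''. This is not true in general for convergence in a locally convex topology: a net can STOP-converge without being uniformly bounded in the module norm (no uniform boundedness principle is available for nets). What \emph{is} true, and what you actually need, is that the \emph{limit functional} $\xi\mapsto \lim_\la\tau(\xi\langle x_\la,x_\la\rangle)=\|x\|_\xi^2$ is bounded on $L_1(\M)$. This follows because the paper's $L_\infty^c(E)$ is (by the Paschke/Lance construction underlying Lemma~\ref{lance}) realized inside $L_\infty(\M,H_c)$, so $x$ itself has a finite module norm $\|x\|$, and then $\|x\|_\xi^2\le \|x\|^2\,\tau(\xi)$ for $\xi\ge 0$. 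That bounds the limiting positive functional and hence identifies it with an element $\langle x,x\rangle\in\M$. With this correction, your proof is complete and is undoubtedly what the authors had in mind.
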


Given a Hilbert space $H$, denote by $B(H)$ the space of all bounded
operators on $H$. Choose a norm one element $e\in H$, let $P_e$ be
the rank one projection onto Span$\{e\}.$ For $0<p\leq\infty$, let
$$L^p(\mathcal{M},H_c)=L_p(B(H)\otimes
\mathcal{M}))(1\otimes P_e).$$ Namely, $L^p(\mathcal{M},H^c)$ is the
column subspace of $L^p(B(H)\otimes \mathcal{M}))$ consisting of all
elements with the form $x(1\otimes P_e)$ for $x\in L^p(B(H)\otimes
\mathcal{M}))$. The definition of $L^p(\mathcal{M},H^c)$ does not
depend on the choice of $e$. $L^p(\mathcal{M},H^c)$ can be
identified
as the predual of $L^q(\mathcal{M},H_c)$ with $q=\frac p{p-1}$ for $%
1\leq p<\infty$. The reader can find more information on
$L^p(\mathcal{M},H_c)$ in Chapter 2 of \cite{JLX}.

\begin{lemma}
\label{lance} $L_p^c(E)$ is isomorphic to a complemented subspace of
$L^p(\mathcal{M},H_c)$ for some Hilbert space $H$. Moreover, the
isomorphism does not depends on $p$ and
 \begin{equation}
 \label{moddual} (L_p^{c}(E))^* \lel L_{q}^c(E)
 \pl ,
 \end{equation}
 for all $1\leq p<\infty, \frac1p+\frac1q=1$.
Here the anti-linear duality bracket $(w,z)=tr(\langle w,z\rangle)$
is used.
\end{lemma}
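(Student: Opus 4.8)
The plan is to reduce the whole statement to a single self-adjoint projection sitting in a large von Neumann algebra, and then let the $L_p$-scale take care of the rest. I first treat the case $p=\infty$. By the definition of the STOP topology, $L_\infty^c(E)$ is the self-dual completion of the pre-Hilbert $C^*$-module $(E,\langle\cdot,\cdot\rangle)$ over $\M$, so Paschke's stabilization theorem (see \cite{Pa} and \cite{La}, Corollary 6.3) applies: $L_\infty^c(E)$ is orthogonally complemented in a free Hilbert $W^*$-module $\ell_2(I)_c\otimes\M$ for a large enough index set $I$. Writing $H=\ell_2(I)$, this free module is precisely $L^\infty(\M,H_c)=L_\infty(B(H)\otimes\M)(1\otimes P_e)$, and the orthogonal projection onto the copy of $L_\infty^c(E)$ is a self-adjoint idempotent $\pi$ in the algebra of adjointable module maps, which is identified with $B(H)\otimes\M$ acting by left multiplication. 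Thus $L_\infty^c(E)\cong \pi L^\infty(\M,H_c)$ isometrically as right $\M$-modules, the inner product $\langle x,y\rangle$ going over to the $P_e$-corner of $x^*y$.

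Next I use the same $\pi$ to describe $L_p^c(E)$ for $1\le p<\infty$. Since $\pi\in B(H)\otimes\M$, the space $\pi L^p(\M,H_c)$ is a complemented subspace of $L^p(\M,H_c)$, and I claim it is isometric to the space of Definition \ref{module}. Indeed, for $x$ in (the image of) $E$ the module inner product $\langle x,x\rangle$ equals the $P_e$-corner $(1\otimes P_e)x^*x(1\otimes P_e)$ of $x$ viewed inside $\pi L^\infty(\M,H_c)$, so $\|x\|_{L_p^c(E)}=\|\langle x,x\rangle\|_{p/2}^{1/2}$ is exactly the $L^p(\M,H_c)$-norm of $x$; and $E$ is dense in $\pi L^p(\M,H_c)$ for this norm, because $E$ is STOP-dense in $\pi L^\infty(\M,H_c)$ while the bounded elements of $\pi L^p(\M,H_c)$ are $L_p$-norm dense and can be reached from $E$ by truncation. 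The resulting embedding $E\hookrightarrow L^p(\M,H_c)$ is implemented by the fixed data ($\pi$ and the fixed column realization of $E$), hence is visibly independent of $p$.

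Finally, \eqref{moddual} follows by combining complementation with the self-duality of the column spaces recalled in Chapter 2 of \cite{JLX}: there $(L^p(\M,H_c))^*=L^q(\M,H_c)$ under the bracket $(w,z)=\mathrm{tr}(w^*z)$, and restricting this pairing to the $\pi$-complemented subspace gives $(\pi L^p(\M,H_c))^*=\pi L^q(\M,H_c)$, i.e.\ $(L_p^c(E))^*=L_q^c(E)$. One then checks that on $\pi L^q(\M,H_c)\times\pi L^p(\M,H_c)$ the trace $\mathrm{tr}=\mathrm{Tr}_{B(H)}\otimes\tau$ of $w^*z$ reduces, through the $P_e$-corner, to $\tau(\langle w,z\rangle)$, so the duality bracket is the asserted one. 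I expect the main obstacle to be the first step: checking that the STOP-completion really is self-dual so that Paschke's theorem applies, and pinning down the identification of the algebra of adjointable maps with $B(H)\otimes\M$ so that $\pi$ genuinely lands in the von Neumann tensor product; once $\pi$ is in hand, the passage along the $L_p$-scale and the duality are formal modulo the routine density argument of the second step.
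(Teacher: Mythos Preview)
The paper does not prove this lemma; it is stated without proof and attributed to Junge--Sherman \cite{JS}. Your outline is the standard argument and is correct: pass to the self-dual $W^*$-completion at $p=\infty$, invoke Paschke's structure theory to realize $L_\infty^c(E)$ as $\pi\,(H_c\otimes\M)$ for a projection $\pi\in B(H)\bar\otimes\M$, and then let this single $\pi$ act on the whole $L_p$-scale; complementation and the duality \eqref{moddual} then follow from the corresponding facts for $L^p(\M,H_c)$ recalled from \cite{JLX}.

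The density step you flag at the end is the only point needing care, and your truncation idea works once made precise. For a finite-trace projection $e\in\M$, STOP-convergence of a bounded net $x_\lambda\to x$ in $\pi L^\infty(\M,H_c)$ gives $\tau(e\langle x_\lambda-x,x_\lambda-x\rangle)\to 0$, i.e.\ $\|(x_\lambda-x)e\|_2\to 0$; interpolating against the uniform $L_\infty$ bound (for $p>2$) or using that $|(x_\lambda-x)e|$ has right support under $e$ (for $p<2$) yields $\|(x_\lambda-x)e\|_p\to 0$. Since $E$ is a right $\M$-module, $x_\lambda e\in E$, and elements of the form $xe$ with $x\in\pi L^\infty(\M,H_c)$ and $\tau(e)<\infty$ are $L_p$-dense in $\pi L^p(\M,H_c)$. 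This closes the gap you anticipated.
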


\subsubsection{Interpolation for $BMO(\partial), BMO(\T), BMO(\P), BMO(\hat\Gamma)$ and $bmo(\P)$.}
We use Meyer's model to prove an interpolation result for the BMO
space corresponding to the $\|\cdot\|_{bmo(\partial)}$-norm. For
$x\in \N^0$ we recall the definition
 \[ \|x\|_{bmo^{c}(\partial)}\lel \sup_t \|\int_0^t  P_{t}|P'_s(x)|^2s
 ds\|^{\frac12}
 \simeq \sup_t \|\int_0^\infty  P_{s+t}|P'_s(x)|^2\min(t,s)
 ds\|^{\frac12} \pl .\]
 Define the $L^{\infty}({\Bbb R}_+)\ten \N$-valued inner product on $\N\ten \N$ by
 $$\langle x\ten a, y\ten b\rangle_{\partial}= a^*(\int_0^\infty P_{s+t}(P'_s(x^*)P'_s(y))\min(t,s)ds)b.
$$
Let $V$ be the Hilbert $L^\infty$-module corresponding to this inner
product. Let $BMO^c(\partial)$ be the strong operator closure of
$\N^0$ in $V$ via the embedding
$$\Phi:x\rightarrow x\ten 1.$$
Let $BMO^r(\partial)$ be the strong operator closure of $\N^0$ in
$V$ via the embedding $x\rightarrow x^*\ten 1$.

We define the column and row space of $BMO(\T), BMO(\P)$,
$BMO(\hat\Gamma)$ and $bmo(\P)$ similarly by using Hilbert
$L_\infty$-modules corresponding to respective BMO-norms given in
Section 2.

To understand the intersection of $BMO^c$ and $BMO^r$, we need the
following observation.

\begin{lemma}\label{vectorspace} Let $x\in X$ with $X\in\{BMO^c(\partial), BMO^c(\T), BMO^c(\P)$,  $BMO^c(\hat\Gamma), bmo^c(\P)$\}. Then $P_t'x$ exists in $\N$ and
\begin{eqnarray}\label{X}
 \|P_t'x\|_{\infty}\kl Ct^{-1} \|x\|_{X} \pl .\end{eqnarray}
\end{lemma}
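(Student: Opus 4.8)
The plan is to estimate $P_t'x$ by exploiting the Carleson-type control that membership in any of these $BMO^c$ spaces gives on the square function $\int_0^\infty P_{s+t}|P_s'x|^2\min(s,t)\,ds$. First I would reduce everything to $BMO^c(\partial)$: by Theorem \ref{lem0}, Proposition \ref{bmo3} and Theorem \ref{last} we have $\|x\|_{BMO^c(\partial)}\le c\|x\|_X$ for each $X$ in the stated list, so it suffices to produce the bound assuming $x\in BMO^c(\partial)$. (For $x\in\N^0$ this is a genuine inequality between norms; for general $x\in BMO^c(\partial)$ one works with an approximating net $x_\lambda\in\N^0$ converging in the STOP topology of $V$, checks that $P_t'x_\lambda$ is Cauchy in $\N$ with the uniform bound, and passes to the limit, which simultaneously \emph{defines} $P_t'x$ as an element of $\N$ and establishes \eqref{X}.)

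The heart of the matter is the pointwise operator estimate for $x\in\N^0$. Fix $t>0$. The idea is to write $P_t'x$ as an average of $P_s'x$ over a window of $s$'s comparable to $t$, and then use Proposition \ref{monot} to move the extra Poisson factor out and recognize the square function. Concretely, using the semigroup property $P_s'x = P_{s-t/2}(P_{t/2}'x)$ one can express, for $s$ in an interval like $[t/2,t]$, the quantity $P_t'x$ (or a fixed second derivative $t\,P_t''x$, which controls differences of $P'$) as $\frac{1}{|I|}\int_I P_{?}\big(\text{something involving }P_s'x\big)\,ds$ up to controllable correction terms; applying the operator Cauchy--Schwarz inequality in the $L_\infty$-module picture — i.e. $|P_r(\,\cdot\,)|^2\le P_r(|\cdot|^2)$, the Kadison--Schwarz inequality \eqref{KS} — gives
\[
 |P_t'x|^2 \kl \frac{c}{t}\int_{t/2}^{t} P_{c t}\big(|P_s'x|^2\big)\,ds
 \kl \frac{c}{t^2}\int_0^\infty P_{s+t}\big(|P_s'x|^2\big)\min(s,t)\,ds,
\]
where in the last step I use $\min(s,t)\simeq t$ and Proposition \ref{monot} to replace $P_{ct}$ by $P_{s+t}$ on the range $s\in[t/2,t]$ at the cost of a constant. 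Taking operator norms and using the definition of $\|x\|_{BMO^c(\partial)}$ yields $\|P_t'x\|_\infty^2\le C t^{-2}\|x\|_{BMO^c(\partial)}^2$, which is \eqref{X}. A parallel but easier computation, differentiating \eqref{ps-form} directly, shows $P_t'x$ indeed exists in $\N$ for $x\in\N^0\subset\N$, so the content is entirely the \emph{quantitative} bound in terms of the $BMO$ norm rather than the sup norm.

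The main obstacle I anticipate is organizing the averaging identity cleanly enough that only $\min(s,t)$-weighted integrals of $|P_s'x|^2$ appear, with no leftover terms that are merely bounded by $\|x\|_\infty$ — the whole point is to avoid the trivial estimate $\|P_t'x\|_\infty\le Ct^{-1}\|x\|_\infty$ and instead land on the $BMO$ norm. This requires being a little careful: one genuinely wants to bound $P_t'x$, but $P_t'x$ itself need not be small even when $x$ has small $BMO$ norm unless one passes through a difference $P_t'x - P_{at}'x$ and uses that $P_t'x\to 0$ as $t\to\infty$ for $x\in\N^0$ (this is where $\N^0$, equivalently $\lim_t P_tx=0$, is used); a telescoping argument à la Lemma \ref{aa}, applied to $P'$ rather than to $P$, then converts the difference estimates into the absolute bound \eqref{X}. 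The second, more technical, obstacle is the limiting argument used to even make sense of $P_t'x$ for $x\in BMO^c(\partial)\setminus\N^0$: one must verify, using Proposition \ref{converge}, that STOP-convergence $x_\lambda\to x$ in $V$ forces $P_t'x_\lambda$ to converge in the operator norm of $\N$, which follows from the just-proved uniform estimate applied to $x_\lambda-x_\mu$ together with density.
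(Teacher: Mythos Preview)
Your overall strategy coincides with the paper's: reduce to $X=BMO^c(\partial)$ via the norm comparisons, approximate by a STOP-convergent net $x_\lambda\in\N^0$, derive a pointwise operator inequality $|P_t'x_\lambda|^2\le Ct^{-2}\langle\Phi(x_\lambda),\Phi(x_\lambda)\rangle_\partial$ from the semigroup identity, Kadison--Schwarz, and Proposition~\ref{monot}, and pass to the limit with Proposition~\ref{converge}. But your implementation of the pointwise estimate has a genuine gap.

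Writing $P_t'x=P_{t-s}(P_s'x)$ for $s\in[t/2,t]$ and applying Kadison--Schwarz gives $|P_t'x|^2\le P_{t-s}(|P_s'x|^2)$, but the leftover Poisson time $t-s$ lies in $[0,t/2]$ and is \emph{smaller} than $s+t$. Proposition~\ref{monot} only yields $P_af\le(a/b)P_bf$ when $a>b$, so you cannot replace $P_{t-s}$ by a constant times $P_{s+t}$ as your displayed chain asserts; near $s=t$ the operator $P_{t-s}$ is close to the identity and dominates every $P_{ct}$ rather than being dominated by it. The remedy --- and this is what the paper does --- is to bound $P_{2t}'x$ instead of $P_t'x$: from $P_{2t}'x=P_{2t-s}(P_s'x)$ one integrates over $s\in[0,t/2]$ (where $2t-s\ge t+s$, so monotonicity goes in the right direction) and obtains
\[
\frac{t^2}{8}\,|P_{2t}'x|^2=\int_0^{t/2}|P_{2t-s}(P_s'x)|^2\,s\,ds\le\int_0^{t/2}\frac{2t-s}{t+s}\,P_{t+s}(|P_s'x|^2)\,s\,ds\le 2\int_0^\infty P_{t+s}|P_s'x|^2\min(s,t)\,ds,
\]
hence $\|P_{2t}'x\|_\infty\le Ct^{-1}\|x\|_{BMO^c(\partial)}$ directly. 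This is a single operator inequality, with no telescoping, no appeal to $P_t'x\to 0$ as $t\to\infty$, and no ``controllable correction terms''; your anticipated obstacle about having to pass through differences $P_t'x-P_{at}'x$ is therefore spurious.
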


\begin{proof}
Fix $t>0$. Let $x_\lambda\in \N^0\subset BMO^c(\partial)$ be a net
such that $\Phi(x_\lambda)=x_\lambda\ten 1$ converges in $V$ with
respect to the STOP topology. We will show that $P_t'x_\la$ weakly
converges in $\N$ and the limit (denoted by $P_t'x$) has norm
smaller than $ct^{-1}\|x\|_{BMO^c(\partial)}$. This is what we mean
by $P_t'x$ exists in $\N$.

We first deduce from Proposition \ref{monot} that, for $t>0$,
 \begin{align}
  \frac{t^2}{2} |\frac{\partial P_{2t}}{2t}x_\lambda|^2 &\le \int_0^t |\frac{\partial P_{2t}}{2t}x_\lambda|^2
  sds \lel  \int_0^t |P_{2t-s}P'_{s}x_\lambda|^2   sds \nonumber\\
  &\le \int_0^t P_{2t-s}(|P'_{s}x_\lambda|^2)   sds
  \kl  \int_0^t \frac{2t-s}{t+s}P_{t+s}(|P'_{s}x_\lambda|^2)   sds \nonumber\\
  &\kl  2 \int_0^{\infty} P_{t+s}(|P'_sx_\lambda|^2) \min(s,t) ds\\
  &= 2\langle \Phi(x_\la),\Phi(x_\la)\rangle_\partial\label{ts}
  \pl .
  \end{align}
By Proposition \ref{converge}, we know that $P_t'x_\lambda$
converges with respect to the strong operator topology of $\N$ and
the limit exists in $\N$ with a norm bounded by $\frac
ct\|x\|_{BMO^c(\partial)}$, since $\Phi(x_\lambda)$ converges in the
STOP topology. Note the $\|\cdot\|_{BMO^c(\partial)}$-norm is
smaller than any of the other $X$-norms by Lemma \ref{bmo3} (ii) and
Theorem \ref{lem0}. We obtain (\ref{X}) for all $X$.\end{proof}

We say that $x\in BMO^c(\partial)$ belongs to $BMO^r(\partial)$ if
$P_t'x=P_t'y$ for some $y\in BMO^r(\partial)$ for all $t>0$. This
$y$ is unique in $BMO^r(\partial)$. In fact, assume there are two
weak$^*$ convergent nets $y_\lambda,\tilde y_\lambda$ in
$BMO^r(\partial)$ such that $P_t'x=P_t'y=P_t'\tilde y$ holds for the
limit elements $y,\tilde y\in BMO^r(\partial)$ and any $t>0$. Then
$P_t'(y_\lambda-\tilde y_\lambda)$ converges to $0$ for any $t$ with
respect to the weak$^*$ topology of $\N$. Hence $\int_0^\infty
P_{b+s}|P_s'(y_\lambda-\tilde y_\lambda)^*|^2sds$ weak $^*$
converges to $0$ for any $b$ by the dominated convergence theorem.
This means $y-\tilde y=0$ in $BMO^r(\partial)$. Set $BMO(\partial)$
to be the space consisting of all such $x$'s equipped with the
maximum norm
$$
\|x\|_{BMO(\partial)}=\max\{\|x\|_{BMO^c(\partial)},\|y\|_{BMO^r(\partial)}\},$$
Here $y$ is the unique $y\in BMO^r(\partial)$ such that
$P_t'x=P_t'y$ for all $t>0$ as we explained above. Define $BMO(\T),
BMO(\P)$, $ BMO(\hat\Gamma)$ and $bmo(\P)$ to be the intersection of
the corresponding row, column spaces similarly.

Once we have these definitions, the same proof of Theorem
\ref{intpol} implies
\begin{theorem} Let $1\le p<\infty$. Assume a standard semigroup $T_t$ admits a standard Markov dilation. Then
 \[ [X,L_1^0(\N)]_{\frac{1}{p}}\lel L_p^0(\N) \pl ,\]
 with equivalence constant in order  $p$ for
 $X=BMO(\partial),BMO(\T), BMO(\P)$, $BMO(\hat\Gamma)$ or $bmo(\P)$.
\end{theorem}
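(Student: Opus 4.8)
The plan is to rerun the proof of Theorem~\ref{intpol}~(i), with the Hilbert space $L_2(\N)$ — which in the semifinite setting is no longer large enough to contain the BMO spaces — replaced throughout by the Hilbert $L_p$-modules of Lemma~\ref{lance}, and with Proposition~\ref{converge} and Lemma~\ref{vectorspace} used to make the relevant maps act on the module completions. The inclusion $L_p^0(\N)\subset[X,L_1^0(\N)]_{\frac1p}$ is the easy half: for $f\in\N^0$ each of the listed BMO-norms is dominated by a constant times $\|f\|_\infty$ (directly for $BMO(\T)$, $BMO(\P)$, $bmo(\P)$ and then, via Theorem~\ref{lem0} and Proposition~\ref{bmo3}~(ii), for $BMO(\partial)$ and $BMO(\hat\Gamma)$), so the identity map on $\N^0$ is bounded from $L_\infty^0(\N)$ to $X$ and is (trivially) the identity on $L_1^0(\N)$; the interpolation functor then gives a bounded map $L_p^0(\N)=[L_\infty^0(\N),L_1^0(\N)]_{\frac1p}\to[X,L_1^0(\N)]_{\frac1p}$.

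For the reverse inclusion it is enough to handle $X=BMO^c(\partial)$ and its row analogue: the column versions of $BMO(\T)$, $BMO(\P)$, $BMO(\hat\Gamma)$, $bmo(\P)$ reduce to it through the continuous inclusions $\N^0\subset X^c\subset BMO^c(\partial)$ (Theorem~\ref{lem0}, Proposition~\ref{bmo3}~(ii)), and the two-sided spaces are obtained by intersecting the column estimate with its row counterpart, Lemma~\ref{vectorspace} supplying — via the common derivatives $P_t'x$ — the identification of an element of $BMO^c(\partial)$ that lies in $BMO^r(\partial)$ with a single martingale in $BMO(\hat\M_a)$, playing the role that the ``same element of $L_2$'' identification plays in the finite case. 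Fix $a>0$ and pass to Meyer's model. By Lemma~\ref{bmo2}~(ii) together with Lemma~\ref{ctmar}, the assignment $x\mapsto P_{br}\hat{n}_a(x)=P_{br}\pi_{\tau_a}x$ is, up to an absolute constant, isometric from $(\N^0,\|\cdot\|_{BMO^c(\partial)})$ into the column martingale space $BMO^c(\hat\M_a)$, and it is bounded from $L_p^0(\N)$ into $L_p(\hat\M_a)$ since $\pi_{\tau_a}$ is trace preserving; using the module $V$ that defines $BMO^c(\partial)$, Lemma~\ref{lance} (which realises $L_p^c(E)$ as a $p$-independent complemented subspace of $L_p(\M,H_c)$) and Proposition~\ref{converge} to pass to STOP-limits, this assignment extends to a map $\iota$ that is bounded on both endpoints of the couple, so
\[
 \iota:\ \bigl[BMO^c(\partial),L_1^0(\N)\bigr]_{\frac1p}\ \longrightarrow\ \bigl[BMO^c(\hat\M_a),L_1(\hat\M_a)\bigr]_{\frac1p}\lel L_p(\hat\M_a),
\]
the last identity with constant $\simeq p$ by Lemma~\ref{JuPe}. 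Finally, exactly as in the finite case, Lemma~\ref{interbr} (that is, Lemma~2.5.11 of \cite{JMr}) yields $\|x\|_{L_p^0(\N)}\le 2\lim_{a\to\infty}\|P_{br}\pi_{\tau_a}x\|_{L_p(\hat\M_a)}$, using $\hat{E}_0\pi_{\tau_a}x=\pi_0P_ax$ and $\|P_ax\|_p\to0$ for $x\in L_p^0(\N)$; combining this with the displayed identity gives $\|x\|_{L_p^0(\N)}\le c\,p\,\|x\|_{[BMO^c(\partial),L_1^0(\N)]_{\frac1p}}$, and intersecting with the row estimate finishes the proof.

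The one point that is genuinely new relative to Theorem~\ref{intpol} — and which I expect to be the main, if essentially routine, obstacle — is the bookkeeping at the level of module completions: one must verify that $x\mapsto P_{br}\pi_{\tau_a}x$ indeed carries the $p$-independent complemented subspace of $L_p(\M,H_c)$ underlying $BMO^c(\partial)$ into the one underlying $BMO^c(\hat\M_a)$ (so that the complex interpolation method applies to the map $\iota$ itself, not merely to the two endpoint spaces separately), and that all the comparison constants involved stay of order $p$. Every analytic estimate needed has already been proved in Sections~2--4.
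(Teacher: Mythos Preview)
Your proposal is correct and follows the same route as the paper, which literally says ``Once we have these definitions, the same proof of Theorem~\ref{intpol} implies'' the result. You have simply (and accurately) spelled out what ``the same proof'' means once the BMO spaces are defined as STOP-closures in Hilbert $L_\infty$-modules rather than as subspaces of $L_2(\N)$: the easy inclusion via $\N^0\subset X$, the reduction to $BMO(\partial)$ through Theorem~\ref{lem0} and Proposition~\ref{bmo3}~(ii), the embedding $P_{br}\pi_{\tau_a}$ into $BMO(\hat\M_a)$ via Lemma~\ref{bmo2}~(ii), and the return to $L_p^0(\N)$ via Lemma~2.5.11 of \cite{JMr}. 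Your final paragraph correctly identifies the only genuinely new ingredient---extending $P_{br}\pi_{\tau_a}$ to the module completion and checking it still lands in the martingale $BMO$ module---and correctly flags it as routine given Lemma~\ref{lance}, Proposition~\ref{converge} and Lemma~\ref{vectorspace}.
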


\subsubsection{Interpolation for $bmo(\T)$.}
For the interpolation for $bmo(\T)$, besides an appropriate
definition of the interpolation couple $bmo(\T), L_1$, we also need
to show that $L_p^0(\N)$ is dense in $[bmo(\T),
L^0_1(\N)]_{\frac1p}$ because we only assume that the special
martingales $m(x)$'s have a.u. continuous path and general
martingales may not, while in the case of $BMO(\partial)$ we have
automatically that all Brownian martingales have continuous path.
This difficulty already appeared in the finite case (see the end of
the proof of Theorem \ref{intpol}). We will go around it by defining
an abstract predual of $bmo(\T)$.

For a standard semigroup $\T=(T_t)$ on $\N$. We consider the
$L_{\frac p2}(\ell_\infty({\Bbb R}_+)\ten \N)$- valued inner
products on $E=\ell_\infty({\Bbb R}_+)\ten (\N\ten \N)$,
\begin{eqnarray*}
 \langle a\ten b, c\ten d\rangle^c_{T}
 \lel b_t^*T_t(a_t^*c_t)d_t,
 \langle a\ten b, c\ten d\rangle^r_{T}
 \lel b_tT_t(a_tc^*_t)d^*_t,
 \end{eqnarray*}
for $a\ten b\in \ell_\infty({\Bbb R}_+)\ten (\N\ten \N)$.
Denote by $V_p^{c}$ (resp. $V_p^r$)
the $L_p(L_\infty({\Bbb R}_+)\ten \N)$-Hilbert module corresponding to
$E,\langle\cdot,\cdot\rangle^c_{\T}$ (resp. $E,\langle\cdot,\cdot\rangle^r_{\T}$).

 Let us denote by $w :\N\to
E $ the embedding map $w(x)_t=x\ten 1-1\ten T_tx$. Then
\begin{eqnarray*}
\langle w(x),w(x)\rangle^c_{\T}&=&T_t|x|^2-|T_tx|^2. \\
\langle w(x),w(x)\rangle^r_{\T}&=&T_t|x^*|^2-|T_tx^*|^2.
\end{eqnarray*}
Denote by $w_c^*$ (resp. $w_r^*$) the adjoint of $w$ with respect to
$\N, \tau(x^*,y); E,\langle\cdot,\cdot\rangle^c_{\T}$ (resp. $\N,$
$\tau(xy^*); E,\langle\cdot,\cdot\rangle^r_{\T}$). We have
 \begin{equation} \label{wtform}
  w_c^*(a\ten b) \lel \sum_t a_tT_t(b_t)-T_t(T_t(a_t)b_t) \pl,
  w_r^*(a^*\ten b^*) \lel \sum_t T_t(b^*_t)a^*_t-T_t(b_t^*T_t(a^*_t)),
 \end{equation}
 for $a\ten b\in \ell_1({\Bbb R}_+)\ten (\N\ten\N).$
Indeed, for $x\in \N$ and $z=a\ten b=(a_t\ten b_t)_t$,
 \begin{align*}
 \tau(x^*w_c^*(z))&= \tau\sum_t(\langle x\ten 1-1\ten T_tx,a_t\ten b_t\rangle_{\T}^c)
  \lel \tau\sum_t ( T_t(x^*a_t)b_t)-tr(T_t(x^*)T_t(a_t)b_t)
  \\
 &=\tau\sum_t   (x^*(a_tT_t(b_t)-T_t(T_t(a_t)b_t))) \pl .
  \end{align*}

\begin{defi}
 \begin{enumerate}
  \item[(i)] The space $bmo^c(\T)$ (resp. $bmo^r(\T)$) is defined as the weak$^*$-closure of
 $\N^0$ in $V_\infty^c$ (resp. $V_\infty^r$) via the embedding $w$.
 \item[(ii)]
 $h_1^c(\T)$ (resp. $h_1^r(\T)$) is defined as the quotient of $V_1^c$
 (resp. $V_1^r$) by the kernel of $w_c^*$ (resp. $w_r^*$).
 The Hardy space $h_1(\T)$ is defined as $h_1^c(\T)+h_1^r(\T)\subset
 L_1(\N)$. More precisely, for $f\in  L_1(\N)$,
 $$ \|f\|_{h_1^c(\T)}=\inf\{\|v\|_{V_1^c},w^*_c(v)=f\}.$$
  \end{enumerate}
\end{defi}
In the following Lemma we report some elementary properties.

\begin{lemma}\label{elemprop}
 \begin{enumerate}
 \item[(i)]   $x\in h_1^c(\T)$ iff $x^*\in h_1^r(\T)$.
 \item[(ii)] $h_1^c(\T)\cap h_1^r(\T)\cap L_p^0(\N)$ is dense in $L_p^0(\N)$ for $1\leq p<\infty$.
  \item[(iii)]  $h_1(\T)\cap L_p$ is dense in $h_1(\T)$ for all $1\leq p\leq\infty$.
 \item[(iv)] $(h_1^c(\T))^*\lel bmo^c(\T)$, $(h_1^r(\T))^*\lel bmo^r(\T)$. Assume $h_1^c(\T)\cap h_1^r(\T)$ is dense in both
 $h_1^c(\T)$ and $h_1^r$. Then $(h_1(\T))^*=bmo(\T)=bmo^c(\T)\cap bmo^r(\T)$.
\item[(v)] Assume that $(T_t)$ admits a reversed
Markov dilation $\M_t, \pi_t$.
  Then the homomorphism $\pi_0:\N^0\to  bmo^c(\M)$ extends to a
 weakly continuous map on $bmo^c(\T)$ and $h_1^c(\T)\cap h_1^r(\T)$ is dense in both
 $h_1^c(\T)$ and $h_1^r(\T)$.
 \end{enumerate}
\end{lemma}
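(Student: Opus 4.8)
The plan is to treat the five parts roughly in order, since the later ones lean on the earlier ones. Part (i) is immediate from the definitions of $h_1^c(\T)$ and $h_1^r(\T)$ together with the formulas \eqref{wtform} for $w_c^*$ and $w_r^*$: replacing a representing vector $v=(a_t\ten b_t)_t\in V_1^c$ by $(b_t\ten a_t)_t$ (or, more precisely, passing through the adjoint $*$ in both tensor legs) interchanges $\langle\cdot,\cdot\rangle^c_\T$ with $\langle\cdot,\cdot\rangle^r_\T$ and $w_c^*$ with $w_r^*$, so that $w_c^*(v)=f$ if and only if $w_r^*(\tilde v)=f^*$ with $\|\tilde v\|_{V_1^r}=\|v\|_{V_1^c}$. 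For part (ii), I would first observe that the range of $w_c^*$ contains $\dom_1(A)\cap L_1^0(\N)$, indeed for $f$ in a suitable dense subalgebra one checks from \eqref{wtform} that $w_c^*$ applied to a finitely-supported vector built from $T_s f$ reproduces elements of the form $\frac1s\int_0^s T_t f\,dt - \dots$, and these are dense in $L_1^0(\N)$ by the remarks in Section 1 on $\dom_1(A)$ being dense; intersecting with the analogous dense set coming from $w_r^*$ and with $\N^0$ (which is dense in $L_p^0(\N)$ for every $p$) gives the claim. Part (iii) follows from (ii) by the very definition $h_1(\T)=h_1^c(\T)+h_1^r(\T)$: an element of $V_1^c$ can be approximated in the $V_1^c$-norm by vectors with entries in $\S_\N\ten\S_\N$, whose image under $w_c^*$ lies in $L_p$ for all $p$, and one does the same on the row side.

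The duality statements in (iv) are the technical heart of the routine part. The spaces $V_\infty^c$ and $V_1^c$ are the $p=\infty$ and $p=1$ Hilbert modules over $L_\infty(\rz_+)\ten\N$ attached to the same inner product, so by Lemma \ref{lance} (the Junge–Sherman identification $(L_p^c(E))^*=L_q^c(E)$, and the fact that the underlying isomorphism with a complemented subspace of $L^p(\M,H_c)$ is $p$-independent) we have $(V_1^c)^*=V_\infty^c$. Now $h_1^c(\T)=V_1^c/\ker(w_c^*)$, so its dual is the annihilator of $\ker(w_c^*)$ inside $V_\infty^c$; one identifies that annihilator with the weak$^*$-closure of $w(\N^0)$ in $V_\infty^c$, i.e.\ with $bmo^c(\T)$, using that $w$ is (by construction) the pre-adjoint-type map dual to $w_c^*$ and that $w(\N^0)$ weak$^*$-generates the relevant subspace. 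The row case is identical, and the statement $(h_1(\T))^*=bmo^c(\T)\cap bmo^r(\T)$ under the stated density hypothesis is the standard fact that the dual of a sum $X+Y$ of subspaces of $L_1$ (with $X\cap Y$ dense in each) is $X^*\cap Y^*$, applied with $X=h_1^c(\T)$, $Y=h_1^r(\T)$.

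Part (v) is where the real work is, and I expect it to be the main obstacle. For the extension of $\pi_0$: by Proposition \ref{bmo1} we have $\|\pi_0(x)\|_{bmo^c(\M)}=\|x\|_{bmo^c(\T)}$ for $x\in\N^0$, so $\pi_0$ is isometric on a weak$^*$-dense subspace; the point is to show it extends \emph{weakly continuously}, which I would get by exhibiting it as the adjoint of a bounded map on the predual side, namely the map $h_1^c(\M)\to h_1^c(\T)$ induced by $E_{[0}\circ(\,\cdot\,)$ restricted appropriately — concretely, dualizing the identity $E_{[t}\pi_0(x)=\pi_t T_t x$ from Proposition \ref{bmo1}. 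The harder assertion is that $h_1^c(\T)\cap h_1^r(\T)$ is dense in $h_1^c(\T)$ (and in $h_1^r(\T)$): the reversed Markov dilation lets one transport the problem into the martingale Hardy space $h_1^c(\M)$, where the analogous density of $h_1^c(\M)\cap h_1^r(\M)$ is known from \cite{JuPe}, and then push the approximation back down to $\N$ via the trace-preserving conditional expectation $E_{[0}$ (which is simultaneously contractive on the column and row Hardy norms because $m_s(x)=\pi_s(T_sx)$ is a martingale). The delicate points will be checking that $E_{[0}$ maps $h_1^r(\M)$ into $h_1^r(\T)$ with control of the norm — this uses that the reversed dilation is compatible with the $*$-operation — and that the intersection is preserved under this projection; these are exactly the kind of estimates carried out for the standard-dilation case in \cite{JMr}, and I would cite those where possible rather than redo them.
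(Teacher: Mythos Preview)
Your approach is largely correct and aligns closely with the paper's proof for parts (i), (iii), (iv), and (v). In particular, your duality argument for (iv) via the annihilator of $\ker(w_c^*)$ inside $V_\infty^c=(V_1^c)^*$ is equivalent to the paper's computation of $\iota^*$ on $\N$ (where $\iota:h_1^c(\T)\to L_1(\N)$ is the inclusion), and your plan for (v)---extending $\pi_0$ by realizing it as the adjoint of a predual map and then transferring the density of $h_1^c\cap h_1^r$ from the martingale Hardy spaces of \cite{JuPe} via $\pi_0^*$---is exactly what the paper does, stated more tersely there as ``$\pi_0^*(h_1^c(\M))=h_1^c(\T)$, then use the corresponding result on martingale Hardy spaces.''

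The one place where your route diverges from the paper, and where your sketch is shakiest, is part (ii). You propose to show directly that the range of $w_c^*$ contains $\dom_1(A)\cap L_1^0(\N)$ by producing elements of the form $\frac1s\int_0^s T_tf\,dt-\dots$; it is not clear from \eqref{wtform} how such averages arise, and the inclusion of all of $\dom_1(A)\cap L_1^0(\N)$ in the range is not obvious. The paper instead argues by annihilators: if $y\in L_{p'}(\N)$ satisfies $\tau\big((aT_t(b)-T_t(T_t(a)b))y\big)=0$ for all $a,b\in L_2\cap L_\infty$ and all $t>0$, then approximating $b$ and part of $a$ by support projections $e_\la\nearrow 1$ and using self-adjointness of $T_t$ yields $\tau(\tilde a\, y)=\tau(\tilde a\, T_{2t}y)$ for every $\tilde a$, hence $T_{2t}y=y$ and $y\in\ker(A_{p'})$. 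This duality argument works uniformly in the semifinite case and avoids having to identify the range explicitly. If you want to salvage your direct approach, the cleanest observation is that taking $b$ to be (an approximation of) $1$ in a single term of \eqref{wtform} produces $a-T_{2t}a$, and such elements are dense in $L_p^0(\N)$ for $1<p<\infty$ since $T_ta\to 0$ there; covering $p=1$ then still requires either the annihilator argument or an additional step.
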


\begin{proof} (i) is obvious because $a\ten b\in V_1^c$ iff $a^*\ten b^*\in V_1^r$ and
$w$ is bounded and injective from  $\N^0$ to $V_\infty^c\cap V_\infty^r$.
For the proof of
(ii), we first show that
 \[ \{ aT_t(b)-T_{t}(T_t(a)b) : t>0,a , b \in L_2\cap L_{\infty} \}
 \subset L_p^0(\N) \]
is dense in $L_p^0(\N)$. Indeed,
let $y\in L_{p'}(\N)$ such that
 \begin{eqnarray}\label{T(T)}
  tr(aT_t(b)y) \lel tr(T_t(T_t(a)b)y) \end{eqnarray}
holds for all $a,b$ as above. By approximation with support
projections and the weak continuity of $T_t$, we deduce from
(\ref{T(T)}) and the self adjointness property of $T_t$ that
 \[ tr(\tilde{a}y)\lel \lim_{\la,\mu }
 tr(\tilde{a}e_{\mu}T_t(e_{\la})y)
 \lel \lim_{\la,\mu }
 tr(T_t(\tilde{a}e_{\mu})e_{\la}T_ty) \lel tr(\tilde{a}T_{2t}y) \pl
 .\]
This shows $T_{2t}(y)=y$ and hence $y\in
L_{p'}(\N_0)=\overline{(\ker A)^\bot}^{\|\pl\|_p}$. Hence
$L_p^0(\N)\cap h_1^c$ is dense in $L_p^0(\N)$. Similarly,
$L_p^0(\N)\cap h_1^r$ is dense in $L_p^0(\N)$.

For (iii), Let $A$ be the set of $a\ten b=a(t)\ten b(t)$ with $a(t),
b(t)\in L_1(\N)\cap\N$ for all $t$ and $a(t)=b(t)=0$ except finite
many $t$'s. Then $w_c^*(A), w_r^*(A)\in L_p(\N)$ and $A$ is dense in
$V_1^c$ and is dense in $V_1^r$.  We conclude that $h_1^c(\T)\cap
L_p(\N)$ is dense in $h_1^c(\T)$ and $h_1^r(\T)\cap L_p(\N)$ is
dense in $h_1^r(\T)$. So $h_1(\T)\bigcap L_p(\N)$ is dense in
$h_1(\T)$.

For the proof of (iv) we see that the inclusion map
$\iota:h_1^c\to  L_1(\N)$ is injective. By the Hahn Banach theorem,
we deduce that $\iota^*(\N)\subset (h_1^c)^*$ is weakly dense.
However, by definition $h_1^c$ is a quotient of $V_1^c$. Hence $(h_1^c)^*$ is a subspace of
$V_\infty^c.$ We then deduce from \eqref{wtform} that, when restricted to $\N$,
the map $\iota^*$ is given by $\iota^*(x)(t)\lel x\ten 1-1\ten
T_tx$. Thus we have
 \[ (h_1^c)^*\lel \iota(\N) \lel bmo^c \pl .\]
Taking adjoints we get $(h_1^r)^*=bmo^r$. Since $X=h_1^c\cap h_1^r$
is dense in both spaces, we may then embed $bmo^c$ and $bmo^r$ in
$X^*$. We see that the inclusion map $\iota_X:X\to L_1(\N)$ is
injective and factors through the inclusion map $\iota_{h_1}:h_1\to
L_1(\N)$. Since $X\subset h_1$ is dense, we deduce that $h_1^*$ is
the weak$^*$-closure of
 \[ h_1^* \lel \overline{\iota^*(\N)}^{\si(h_1^*,h_1)}
  \subset X^* \pl .\]
Note that the last inclusion is injective and certainly
$h_1^*\subset bmo^c\cap bmo^r$ because elements in $\N$ give rise to
functionals which coincide on the intersection. For the converse
inclusion $bmo^c\cap bmo^r\subset h_1^*$, it suffices to recall that
a bounded functional extends uniquely from a dense subspace.

We now prove (v). Recall that a net $x_\lambda\in \N^0$ weakly
converges in $bmo^c(\T)$, if the inner product  $\langle
w(x_\lambda),w(x_\lambda)\rangle_\T^c=T_t|x|^2-|T_tx|^2$ weakly
converges in $\ell_\infty\ten\N$. This is equivalent to the weak
convergence of $\langle
\pi_0x_\lambda),\pi_0x_\lambda)\rangle_\E^c=\pi_t(T_t|x|^2-|T_tx|^2)$
in $\ell_\infty\ten \M$, which is the meaning of weak$^*$
convergence of $(\pi_0x_\lambda)$ in $bmo^c(\M)$ (see \cite{JuPe}).
Therefore, $\pi_0(bmo^c(\T))\subset bmo^c(\M)$ is a weak$^*$ closed
subspace and $\pi_0^*(h_1^c(\M))=h_1^c(\T)$. We obtain the density
of $h_1^r(\T)\cap h_1^r(\T)\subset h_1^c(\T)$ by the corresponding
result on martingale Hardy spaces.\qd

\begin{lemma}\label{swindle} Assume that a standard semigroup $(T_t)$ has
a reversed Markov dilation with a.u. continuous path. Then
 \[ \pi^*_0(H_1^c(\M)) \subset h_1^c(\T) \subset L_1(\N) \pl .\]
\end{lemma}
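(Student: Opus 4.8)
The plan is to dualize the embedding $\pi_0:bmo^c(\T)\hookrightarrow bmo^c(\M)$ and to exploit that, under the a.u.\ continuity hypothesis, the martingales in the image of $\pi_0$ are orthogonal to the diagonal Hardy space $h_1^d(\M)$. Recall from Proposition~\ref{bmo1} and Lemma~\ref{elemprop}(v) that $\pi_0$ extends to a weak$^*$-continuous isometric embedding of $bmo^c(\T)$ into $bmo^c(\M)$, that for $f\in\N$ the martingale representing $\pi_0(f)$ is $m(f)=(\pi_sT_sf)_s=(E_{[s}\pi_0f)_s$, and that the pre-adjoint $\pi_0^*$ — which is simply the restriction to $L_1(\M)$ of the normal trace-preserving map adjoint to the inclusion $\pi_0:\N\hookrightarrow\M$, hence also $L_2$-bounded and $L_1$-contractive — maps $h_1^c(\M)$ boundedly onto $h_1^c(\T)$.

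\emph{Step 1: $\pi_0^*$ annihilates $h_1^d(\M)$.} By the a.u.\ continuity assumption there is a weakly dense set $B\subset L_2(\N)\cap\N$ (obtained from the hypothesis after the usual truncations) such that $m(f)$ has a.u.\ continuous path for every $f\in B$; this is the only place the hypothesis enters. For such $f$, $\pi_0(f)=m(f)$ is a martingale in $\M\cap L_2(\M)$ with a.u.\ continuous path, so \eqref{h1d*} (the key identity in the proof of Lemma~\ref{ctmar}) gives $\tau(\eta^*\,\pi_0f)=0$ for all $\eta\in h_1^d(\M)\cap L_2(\M)$. Hence for such $\eta$ and all $f\in B$,
\[
\tau\big(f^*\,\pi_0^*\eta\big)=\tau\big((\pi_0f)^*\,\eta\big)=0 ,
\]
and since $\pi_0^*\eta\in L_2(\N)$ while $B$ is weakly dense in $L_2(\N)$, we conclude $\pi_0^*\eta=0$. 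Because $h_1^d(\M)\cap L_2(\M)$ is dense in $h_1^d(\M)$, $\pi_0^*$ is $L_1$-contractive, and $h_1^d(\M)\hookrightarrow L_1(\M)$, it follows that $\pi_0^*\big(h_1^d(\M)\big)=0$. This is the ``swindle'': the path continuity forces the diagonal part to disappear, which would fail for a generic Markov dilation.

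\emph{Step 2: conclude via the atomic decomposition of $H_1^c(\M)$.} For $\xi\in H_1^c(\M)=h_1^c(\M)+h_1^d(\M)$ write $\xi=\xi_1+\xi_2$ with $\xi_1\in h_1^c(\M)$, $\xi_2\in h_1^d(\M)$ and $\|\xi_1\|_{h_1^c(\M)}+\|\xi_2\|_{h_1^d(\M)}\le 2\|\xi\|_{H_1^c(\M)}$ (the decomposition used in the proof of Lemma~\ref{ctmar}, from \cite{JuPe}). Since $\pi_0^*$ is a fixed linear map on $L_1(\M)$, Step~1 gives $\pi_0^*\xi=\pi_0^*\xi_1$, which by Lemma~\ref{elemprop}(v) lies in $h_1^c(\T)$ with $\|\pi_0^*\xi\|_{h_1^c(\T)}\le C\|\xi_1\|_{h_1^c(\M)}\le 2C\|\xi\|_{H_1^c(\M)}$ (in particular this value is independent of the decomposition chosen). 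Thus $\pi_0^*$ restricts to a bounded map $H_1^c(\M)\to h_1^c(\T)\subset L_1(\N)$, which is the assertion.

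The main obstacle is Step~1 — legitimizing the removal of the diagonal component $\xi_2$ — and this rests entirely on \eqref{h1d*}, hence on the a.u.\ continuity of the dilation. The remaining ingredients ($L_2$- and $L_1$-boundedness of $\pi_0^*$, density of $h_1^d(\M)\cap L_2(\M)$ in $h_1^d(\M)$, the boundedness $\pi_0^*(h_1^c(\M))=h_1^c(\T)$ of Lemma~\ref{elemprop}(v), and the atomic decomposition of $H_1^c(\M)$) are all recorded or used earlier, so only routine bookkeeping is needed to assemble them.
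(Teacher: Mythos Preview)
Your argument follows the paper's proof closely: both decompose $H_1^c(\M)=h_1^c(\M)+h_1^d(\M)$, invoke Lemma~\ref{elemprop}(v) on the first summand, and kill the second by dualizing against elements $f$ with a.u.\ continuous martingale $m(f)$. The only difference is in the pairing used for Step~1: you test $\eta\in h_1^d\cap L_2$ against $f\in B\subset L_2\cap\N$ via \eqref{h1d*}, while the paper tests $\xi\in h_1^d\cap h_p^d$ (some $1<p<2$) against $y\in B_q$ via the $h_p^d$--$h_q^d$ duality and the vanishing $\|\pi_0(y)\|_{h_q^d}=0$. The paper's choice sidesteps your appeal to ``the usual truncations'' to force $B\subset L_2(\N)\cap\N$: since \eqref{h1d*} as proved requires $\pi_0(f)\in L_2(\M)\cap\M$, and truncating $f$ does not commute with forming $m(f)=(\pi_sT_sf)_s$, this step is not quite automatic from the hypothesis (which only supplies weakly dense $B_p\subset L_p(\N)$). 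Replacing your $L_2$ pairing by the $L_p$--$L_q$ pairing as in the paper removes the need for this truncation; otherwise the two proofs are the same.
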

\begin{proof} We have seen  that $\pi^*_0(h_1^c(\M))=
h_1^c(\T)$ and $\pi^*_0(H_1^c(\M))\subset \pi^*_0(L_1(\M))=L_1(\N)$.
 Let us recall that $H_1^c(\M)=h_1^c(\M)+h_1^d(\M)$. We are going to show that $\pi^*_0(h_1^d(\M))$ vanishes in $L_1(\N)$.
 By density it suffices to consider $\xi\in h_1^d(\M)\cap h_p^{d}(\M)$ for some $1<p<2$.
Recall that there are weakly dense subsets  $B_{q}$ of $L_{q}(\N)$
such that the martingale $m(f)=(E_{[t}(\pi_0f))_t$ has \au
continuous path if $T_t$ admits a reversed Markov dilation with a.u.
continuous path. (see the definition at the beginning of this
section). Let $y\in B_{q}$. By Lemma \ref{ctmar},
 \[ \|\pi_0(y)\|_{h_{q}^d} \lel 0 \pl .\]
This implies
 \begin{align*}
  |tr(\pi_0^*(\xi^*)y)| &= |tr(\xi^*\pi_0(y))| \kl
  \lim_{\si} \|\xi\|_{h_{p}^d(\si)}
  \|\pi_0(y))\|_{h_{q}^d(\si)} \lel 0 \pl .
  \end{align*}
Hence  $tr(\pi_0^*(\xi)\cdot)$ vanishes on a weakly dense set of
$L_q(\N)$ and is $0$ in $L_p(\N)$. So it is $0$ in $L_1(\N)$. Thus
$\pi_0^*$ is $0$ on $h_1^d\cap h_p^d$ and hence identically $0$.
Therefore we have indeed $\pi^*_0(H_1^c(\M))\subset h_1^c(\T)$.
 \qd

\begin{theorem}\label{intt1} Let $1<p<\infty$ and $(T_t)$ be a standard semigroup admitting a reversed Markov dilation with \au continuous path.
Then
 \[ [bmo^0(\T),L_1^0(\N)]_{\frac1p}\lel [bmo^0(\T),h_1(\T)]_{\frac1p}
 \lel [\N^0,h_1(\T)]_{\frac1p} \lel  L_p^0(\N) \pl .\]
\end{theorem}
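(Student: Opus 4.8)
The plan is to establish the chain of identifications from right to left, using the abstract predual machinery of Lemma \ref{elemprop} together with the Markov dilation $(\M_t,\pi_t)$ and the martingale interpolation results of Lemma \ref{JuPe}. First I would observe that the middle two equalities are mostly formal once the outer two spaces are understood: since $\N^0$ is dense in $bmo^0(\T)$ (by definition as a weak$^*$-closure) and in $h_1(\T)$ (Lemma \ref{elemprop}(ii),(iii)), and since $h_1(\T)\subset L_1^0(\N)$ continuously with dense range, the general theory of complex interpolation for compatible couples gives $[bmo^0(\T),h_1(\T)]_{\frac1p}=[\N^0,h_1(\T)]_{\frac1p}$ as soon as one knows that these are the same Banach space up to equivalent norm; the content is to identify all of them with $L_p^0(\N)$. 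So the real work is the two ``outer'' identifications $[bmo^0(\T),L_1^0(\N)]_{\frac1p}=L_p^0(\N)$ and $[\N^0,h_1(\T)]_{\frac1p}=L_p^0(\N)$, and then to check the couples are genuinely compatible so that the interpolation spaces coincide.

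The heart of the argument is transport along the reversed Markov dilation. By Proposition \ref{bmo1}, $\pi_0$ embeds $bmo^c(\T)$ isometrically into $bmo^c(\M)$, and by Lemma \ref{elemprop}(v) this extends to a weakly continuous embedding with $\pi_0^*(h_1^c(\M))=h_1^c(\T)$ (and the row analogue), and moreover Lemma \ref{swindle} gives $\pi_0^*(H_1^c(\M))\subset h_1^c(\T)$ — this is the crucial point that lets us replace the martingale Hardy space $H_1$ (which carries the full martingale interpolation scale) by the smaller $h_1(\T)$ without changing the interpolation space, because the diagonal part $h_1^d(\M)$ dies under $\pi_0^*$ thanks to the a.u.\ continuous path hypothesis. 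I would then run the standard ``dilation sandwich'': $\pi_0$ maps the couple $(bmo^0(\T),L_1^0(\N))$ into $(BMO(\M),L_1(\M))$ compatibly, Lemma \ref{JuPe} gives $[BMO(\M),L_1(\M)]_{\frac1p}=L_p(\M)$ with constants $\simeq p$, and a conditional-expectation / projection argument (the trace-preserving $E_{[0}$ composed with the projection onto $L_p^0$, exactly as in the proof of Theorem \ref{intpol}) produces the reverse estimate $\|x\|_{L_p^0(\N)}\le c p\,\|x\|_{[bmo^0(\T),L_1^0(\N)]_{\frac1p}}$. Combined with the trivial inclusion $L_p^0(\N)\subset[bmo^0(\T),L_1^0(\N)]_{\frac1p}$ coming from $\N^0\subset bmo^0(\T)$, this gives the first and last equalities. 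For the $h_1(\T)$ endpoint one does the same with the couple $(\N^0,h_1(\T))$ mapped by $\pi_0^*$ (or rather by the adjoint scheme) into $(BMO(\M),H_1(\M))$, using $\pi_0^*(H_1^c(\M)+H_1^r(\M))\subset h_1(\T)$ and duality $(h_1(\T))^*=bmo(\T)$ from Lemma \ref{elemprop}(iv).

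The main obstacle is the one flagged in the text just before the statement: one cannot simply say ``$L_p^0(\N)$ is dense in $[bmo^0(\T),L_1^0(\N)]_{\frac1p}$'' by a soft argument, because only the special martingales $m(x)=(E_{[t}\pi_0 x)_t$ are assumed to have a.u.\ continuous path, not arbitrary martingales, so the clean continuous-path reasoning available in the $BMO(\partial)$ Brownian setting is unavailable here. The device for getting around this is precisely the abstract predual $h_1(\T)$: by working with the couple $(\N^0, h_1(\T))$ — where $\N^0$ is by construction dense in $h_1(\T)$, hence $L_p^0(\N)=[\N^0,h_1(\T)]_{\frac1p}$ sits densely inside, with both endpoints ``seen'' by the dilation via $\pi_0^*$ — one recovers density in the interpolation space for free, and then Lemma \ref{swindle} is what certifies that $h_1(\T)$ rather than the full $H_1^c(\M)+H_1^r(\M)$ is the correct predual, i.e.\ that the diagonal martingale component contributes nothing. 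I would therefore prioritize checking carefully (i) compatibility of the three couples, i.e.\ that the pairwise intersections are dense and the embeddings into the larger ambient space ($L_1(\N)$, or an $L_p$-Hilbert module as in Lemma \ref{lance}) are injective with the right topology, and (ii) that the constants coming out of Lemma \ref{JuPe} propagate through $\pi_0$ and the projection step to give the claimed order-$p$ dependence; the rest is bookkeeping along the lines already carried out in the proof of Theorem \ref{intpol}.
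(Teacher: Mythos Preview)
Your identification of Lemma \ref{swindle} and the abstract predual $h_1(\T)$ as the crux is correct, but the plan has a structural gap. The ``dilation sandwich'' you propose for $[bmo^0(\T),L_1^0(\N)]_{\frac1p}$ requires $\pi_0:bmo^0(\T)\to BMO(\M)$ to be bounded. In the finite case this follows from Lemma \ref{ctmar}, which needs $L_2$-approximation of the target element by martingales with \au continuous path. In the semifinite setting $bmo^0(\T)$ is the weak$^*$-closure of $\N^0$ in a Hilbert $L_\infty$-module, so its elements are only STOP-limits of nets in $\N^0$, and Lemma \ref{ctmar} is unavailable. You acknowledge the obstacle, but your workaround --- switching to the couple $(\N^0,h_1(\T))$ --- computes a different interpolation space and does not by itself recover the $bmo^0(\T)$ endpoint. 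Your claim that the middle equalities are ``formal once the outer two spaces are understood'' is precisely where the argument breaks: density of $\N^0$ in $bmo^0(\T)$ does \emph{not} imply $[\N^0,h_1(\T)]_{\frac1p}=[bmo^0(\T),h_1(\T)]_{\frac1p}$; complex interpolation is not stable under passing to dense subspaces at an endpoint.

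The paper avoids this by never trying to map $bmo^0(\T)$ into $BMO(\M)$ directly. Instead it works entirely on the predual side with $L_2^0(\N)$ as an intermediate endpoint: for $1<p\le 2$ and $\frac1p=\frac{1+\theta}{2}$, the chain
\[
L_p^0(\N)=\pi_0^*(\pi_0 L_p^0(\N))\subset \pi_0^*(L_p^0(\M))\subset \pi_0^*[L_2^0(\M),H_1(\M)]_\theta \subset [L_2^0(\N),h_1(\T)]_\theta
\]
(the last inclusion is exactly Lemma \ref{swindle}) combined with the trivial reverse gives $[L_2^0(\N),h_1(\T)]_\theta=L_p^0(\N)$. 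Now the duality $(h_1(\T))^*=bmo(\T)$ from Lemma \ref{elemprop}(iv) turns this into $[bmo^0(\T),L_2^0(\N)]_\theta=L_{p'}^0(\N)$ for $p'\ge 2$, and Wolff's reiteration theorem then glues the two half-scales to yield all four equalities for every $1<p<\infty$. The two ingredients you are missing --- duality through $(h_1(\T))^*=bmo(\T)$ and Wolff's theorem --- are what carry the predual computation back to the $bmo^0(\T)$ endpoint; without them there is no path from your $(\N^0,h_1(\T))$ calculation to the stated result.
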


\begin{proof} By Lemma \ref{swindle}, we have, for $1<p\le 2$ and
$\frac{1}{p}=\frac{1+\theta}{2}$,
$$L_p^0(\N)=\pi_0^*(\pi_0L_p^0(\N))\subset \pi_0^*(L_p^0(\M))\subset \pi_0^*[L_2^0(\M),H_1(\M)]_\theta\subset [L_2^0(\N),h_1(\T)]_\theta.$$
Combining this with the trivial inclusion
 \[  [L_2^0(\N),h_1(\T)]_\theta\subset
 [L_2^0(\N),L_1^0(N)]_\theta \lel L_p^0(\N)\]
we equality in this range.  Theorem \ref{intt1} follows by duality and Wolffs' theorem (see \cite{Mu} for a similar
argument). \qd

\subsubsection{Interpolation for $BMO(\Gamma)$.}
Our last concern in this section are interpolation result for
$BMO(\Gamma)$ spaces. We first need some definitions. We define
 a $\ell^{\infty}({\Bbb R}_+)\ten \N$-valued inner product on $\N\ten \N$ by
 \[ \langle x\ten a,y\ten b\rangle_\Gamma \lel a^*\int_0^{\infty} P_{s+t}\Gamma(P_sx,P_sy)
 \min(s,b) ds b\pl .\]
Let $\mathcal{L}$ be the Hilbert $\ell^{\infty}({\Bbb R}_+)\ten
\N$-module corresponding to this inner product. Recall that we
denote by $P_{\Gamma}$ the projection on the spatial part in Meyer's
model.

\begin{defi} Let $BMO^c(\Gamma)$ be the weak$^*$-closure of $\N^0$ in
$\mathcal{L}$ via the embedding
$$\Phi: x\rightarrow x\ten 1.$$
Let
$BMO^r(\Gamma)$ be the weak$^*$-closure of $\N^0$ in $\mathcal{L}$
via the embedding $x\rightarrow x^*\ten 1$.
\end{defi}

 Let  $x_\lambda \in \N^0 $ be a bounded net in $BMO^c(\Gamma)$ which weak$^*$ converges to $x\in BMO^c(\Gamma)$. Recall that $P_bx$ exists in $BMO^c(\Gamma)$ for any $b>0$ and $$S(t)=\int_0^\infty P_{t+s} {\Gamma }[P_s(x)]\min\{t,s\}ds.$$
exists in $\ell_\infty({\Bbb R}_+)\otimes \N$ as the weak$^*$ limit of
$$S_\lambda(t)=\int_0^\infty P_{t+s} {\Gamma }[P_s(x_\lambda)]\min\{t,s\}ds
.$$ Here and in the following, ${\Gamma }[x]$ denotes ${\Gamma }(x,x)$ for simplification. We need the following lemma to understand the intersection of
$BMO^c(\Gamma)$ and $BMO^r(\Gamma)$.

\begin{lemma}\label{lastlemma} Let $(T_t)$ be a standard semigroup satisfying $\Gamma^2\gl
0$. Then,  for any $x\in BMO^c(\Gamma)$

(i) $P_{2b}\Gamma (P_bx,P_bx)$ exists in $\N$ for any $b>0$, and
\begin{eqnarray*}
\|P_{2b}\Gamma (P_bx,P_bx)\|\leq \frac {6}{b^2}
\|x\|^2_{BMO^c(\Gamma)}.
\end{eqnarray*}

(ii) $T_{t}|P_bx|^2-|T_tP_bx|^2$ exists in $\N$ for any $t,b>0$ and
\begin{eqnarray*}
\|T_{t}|P_bx|^2-|T_tP_bx|^2\|\leq \frac {6t}{b^2}
\|x\|^2_{BMO^c(\Gamma)}.
\end{eqnarray*}

(iii) $P_bx$ weak $^*$ converges to $x$ in $BMO^c(\Gamma)$ as $b\rightarrow0$.

(iv) $x=0$ in $BMO^c(\Gamma)$ iff $P_bx=0$ in $bmo^c(\T)$ for any $b>0$.

The similar properties hold for $y\in BMO^r(\Gamma)$.
\end{lemma}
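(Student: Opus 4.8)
The plan is to imitate the proof of Lemma \ref{vectorspace}. Fix $x\in BMO^c(\Gamma)$ and a net $x_\lambda\in\N^0$ such that $\Phi(x_\lambda)=x_\lambda\ten 1$ converges to $x$ in the STOP topology of $\mathcal{L}$; by Proposition \ref{converge} this means that the inner products $S_\lambda(t):=\langle\Phi(x_\lambda),\Phi(x_\lambda)\rangle_\Gamma(t)=\int_0^\infty P_{s+t}\Gamma(P_sx_\lambda,P_sx_\lambda)\min(s,t)\,ds$ converge weak$^*$ in $\ell^\infty(\mathbb{R}_+)\ten\N$ to $S(t):=\langle x,x\rangle_\Gamma(t)$, with $\|S(t)\|\le\|x\|_{BMO^c(\Gamma)}^2$ for all $t$. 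Each quantity appearing in (i)--(ii) is (the diagonal of) a positive $\N$-valued sesquilinear form on $\N^0$: for (i) it is $(y,z)\mapsto P_{2b}\Gamma(P_by,P_bz)$, and for (ii) it is $(y,z)\mapsto T_t((P_by)^*P_bz)-T_t((P_by)^*)T_t(P_bz)$, which is the form identified through the GNS construction for $T_t$ (used already in the proof that $bmo^c(\T)$ is a seminorm). The strategy is: dominate each such form, pointwise in the operator order, by a fixed constant multiple of one slice $S_\lambda(t_0)$ of the inner product; then, exactly as in Lemma \ref{vectorspace}, pass to the Hilbert $L_\infty$-module attached to the dominated form, observe that $\Phi(x_\lambda)$ still converges there in STOP, and invoke Proposition \ref{converge} to produce the limit element in $\N$ together with the asserted norm bound. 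All domination estimates come from $\Gamma^2\geq0$ and the monotonicity Proposition \ref{monot}.

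For (i): when $b/2\le s\le b$, applying $\Gamma^2\geq0$ for the Poisson semigroup gives $\Gamma(P_bx_\lambda,P_bx_\lambda)=\Gamma(P_{b-s}P_sx_\lambda,P_{b-s}P_sx_\lambda)\le P_{b-s}\Gamma(P_sx_\lambda,P_sx_\lambda)$, hence $P_{2b}\Gamma(P_bx_\lambda,P_bx_\lambda)\le P_{3b-s}\Gamma(P_sx_\lambda,P_sx_\lambda)$. Since $s+b\le 3b-s$ on this range, Proposition \ref{monot} yields $P_{3b-s}\Gamma(P_sx_\lambda,P_sx_\lambda)\le\tfrac{3b-s}{s+b}P_{s+b}\Gamma(P_sx_\lambda,P_sx_\lambda)\le\tfrac53 P_{s+b}\Gamma(P_sx_\lambda,P_sx_\lambda)$. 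Integrating in $s$ over $[b/2,b]$ against $\min(s,b)\,ds=s\,ds$ (mass $3b^2/8$) and enlarging the range to $(0,\infty)$ gives $P_{2b}\Gamma(P_bx_\lambda,P_bx_\lambda)\le\tfrac{40}{9b^2}S_\lambda(b)\le\tfrac{6}{b^2}S_\lambda(b)$; passing to the limit gives (i), and in fact the same argument with $t_0=b/3$ (using that for $s\le(b-t_0)/2$ one has $P_{s+t_0}\Gamma(P_sx_\lambda,P_sx_\lambda)\ge\tfrac{s+t_0}{b-s}P_{b-s}\Gamma(P_sx_\lambda,P_sx_\lambda)\ge\tfrac{t_0}{b}\Gamma(P_bx_\lambda,P_bx_\lambda)$) shows that $\Gamma(P_bx,P_bx)$ itself exists in $\N$ with $\|\Gamma(P_bx,P_bx)\|\le\tfrac{C}{b^2}\|x\|_{BMO^c(\Gamma)}^2$, which will be used below.

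For (ii) and (iv): by Lemma \ref{lemma}(i) applied to $f=P_bx_\lambda$ followed by $\Gamma^2\geq0$, $0\le T_t|P_bx_\lambda|^2-|T_tP_bx_\lambda|^2=2\int_0^t T_{t-v}\Gamma(T_vP_bx_\lambda,T_vP_bx_\lambda)\,dv\le 2t\,T_t\Gamma(P_bx_\lambda,P_bx_\lambda)$. Combining this with the bound $\Gamma(P_bx_\lambda,P_bx_\lambda)\le\tfrac{C}{b^2}S_\lambda(b/3)$ from the previous paragraph and passing to the limit (the relevant form is the positive one above) yields the element $T_t|P_bx|^2-|T_tP_bx|^2\in\N$ with norm $\le\tfrac{6t}{b^2}\|x\|_{BMO^c(\Gamma)}^2$ after a careful choice of the splitting parameters (keeping the $v$-dependence of $\Gamma(T_vP_bx_\lambda,T_vP_bx_\lambda)$ rather than the crude bound by $T_t\Gamma(P_bx_\lambda,P_bx_\lambda)$ improves the constant). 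For (iv): if $x=0$ in $BMO^c(\Gamma)$ then $S\equiv0$, so $\Gamma(P_bx,P_bx)=0$ for every $b>0$ by the domination just used, and then Lemma \ref{lemma}(i) together with $\Gamma(T_vP_bx,T_vP_bx)\le T_v\Gamma(P_bx,P_bx)=0$ forces $T_t|P_bx|^2=|T_tP_bx|^2$ for all $t$, i.e. $P_bx=0$ in $bmo^c(\T)$. Conversely, if $T_t|P_bx|^2=|T_tP_bx|^2$ for all $t,b$, then $\int_0^t T_{t-v}\Gamma(T_vP_bx,T_vP_bx)\,dv=0$; positivity and trace preservation give $\Gamma(T_vP_bx,T_vP_bx)=0$ for almost every $v$, letting $v\downarrow0$ gives $\Gamma(P_bx,P_bx)=0$ for all $b$, hence $S\equiv0$ and $x=0$ in $BMO^c(\Gamma)$.

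For (iii): first, $P_b$ extends to a bounded operator on $BMO^c(\Gamma)$ with $\sup_{0<b\le1}\|P_b\|<\infty$ — substituting $u=s+b$ in $\langle\Phi(P_bx_\lambda),\Phi(P_bx_\lambda)\rangle_\Gamma(t)$ and invoking $\Gamma^2\geq0$ and Proposition \ref{monot} bounds it pointwise by a constant times $S_\lambda(t)$. Second, for $y$ in the weak$^*$-dense $^*$-subalgebra $\A\cap\N^0$ one has $P_by\to y$ in $\N$, and on such $y$ the defining inner products are genuine norm-convergent integrals, so the estimate $\Gamma(P_s(P_b-I)y,P_s(P_b-I)y)\le P_s\Gamma((P_b-I)y,(P_b-I)y)$ together with $\Gamma((P_b-I)y,(P_b-I)y)\to0$ in $\N$ lets dominated convergence conclude $\Phi(P_by)\to\Phi(y)$ weak$^*$ in $\mathcal{L}$; a standard $\varepsilon/3$ argument using weak$^*$ density and the uniform bound upgrades this to $P_bx\to x$ weak$^*$ in $BMO^c(\Gamma)$ for every $x$. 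The statements for $y\in BMO^r(\Gamma)$ follow verbatim upon replacing $x$ by $x^*$. The main obstacle is structural: for $x\in BMO^c(\Gamma)$ there is no underlying operator $x$, so each of $P_bx$, $\Gamma(P_bx,P_bx)$, $T_t|P_bx|^2-|T_tP_bx|^2$ exists only as a weak$^*$ limit along the defining net, and the only mechanism permitting this is the interplay of positivity of $\Gamma$, positivity of the semigroup operators, and Proposition \ref{converge}; hence everything reduces to finding, for each target, the correct pointwise operator domination by a single slice of $\langle\Phi(\cdot),\Phi(\cdot)\rangle_\Gamma$, which is where the work (and the numerical constants) is concentrated.
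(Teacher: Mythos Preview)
Your (i) and (ii) follow the paper's route: dominate $P_{2b}\Gamma(P_bx_\lambda,P_bx_\lambda)$ (resp.\ $T_t|P_bx_\lambda|^2-|T_tP_bx_\lambda|^2$) by a fixed slice of $S_\lambda$ via $\Gamma^2\ge0$ and Proposition~\ref{monot}, then invoke Proposition~\ref{converge}. The integration ranges differ slightly but the mechanism is identical.

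Part (iii) has a genuine gap. First, your dominated-convergence step for $y\in\A\cap\N^0$ fails as written: the bound $\Gamma(P_s(P_b-I)y,P_s(P_b-I)y)\le P_s\Gamma((P_b-I)y,(P_b-I)y)$ leads, after applying $P_{s+t}$ and integrating, to $\|\Gamma((P_b-I)y,(P_b-I)y)\|\int_0^\infty\min(s,t)\,ds=\infty$, so no integrable dominant is available. Second --- and more seriously --- the $\varepsilon/3$ argument cannot upgrade weak$^*$ convergence from a weak$^*$-dense subset to all of $BMO^c(\Gamma)$: uniform boundedness of $(P_b)$ together with pointwise weak$^*$ convergence on a weak$^*$-dense set does \emph{not} force weak$^*$ convergence everywhere, because the predual elements $P_b^*\xi$ vary with $b$ and you have no \emph{norm} approximation of $x$ by elements of $\N^0$. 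The paper instead estimates $Q_b=I-P_b$ directly: writing $Q_bx_\lambda=-\int_0^b P_v'x_\lambda\,dv$ and applying the convexity inequality (\ref{convex gammain}) exactly as in the proof of Lemma~\ref{mulgamma}, one dominates $\langle\Phi(Q_bx_\lambda),\Phi(Q_bx_\lambda)\rangle_\Gamma(t)$ pointwise by
\[
8\sqrt{b}\,S_\lambda(t)\;+\;8\int_0^{\sqrt b}P_{t+v}\Gamma[P_vx_\lambda]\min(t,v)\,dv\,;
\]
after passing to the weak$^*$ limit in $\lambda$, both terms tend to $0$ as $b\to0$ (the second by absolute continuity of the integral). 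This direct operator-inequality is the missing ingredient.

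For (iv) reverse, your ``letting $v\downarrow0$'' is glib: you only obtain $\Gamma(T_vP_bx,T_vP_bx)=0$ for a.e.\ $v$, and continuity at $v=0$ is not automatic. One can repair this by feeding the a.e.\ vanishing into the subordination formula together with the convexity of $\Gamma$, but the paper proceeds differently: it shows $P_{2b}x=0$ in $BMO^c(\Gamma)$ for every $b$ (reversing the domination used in (ii)) and then invokes (iii). Thus the paper's (iv) actually rests on having a correct proof of (iii).
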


\begin{proof} Let us fix $b>0$ and a net $x_\lambda \in \N^0$ such that $\Phi(x_\lambda)$ converges with respect to the STOP
topology in $\mathcal{L}$. By (i), we mean that $P_{2b}\Gamma(P_bx_\lambda
,P_bx_\lambda )$ weak$^*$ converges in $\N$ and the limit is with a
norm smaller than $\frac{b^2}6\|x\|_{BMO^c(\Gamma)}^2$. We first
deduce from $\Gamma^2\gl 0$ and Proposition \ref{monot} that
 \begin{align}\label{Gb}
 \frac{b^2}{2} P_{2b}\Gamma(P_bx_\lambda ,P_bx_\lambda )&=   \int_0^b
P_{2b}\Gamma(P_bx_\lambda ,P_bx_\lambda )
  sds\nonumber
   \lel \int_0^b P_{2b}\Gamma(P_{b-s}P_sx_\lambda ,P_{b-s}P_sx_\lambda ) sds\nonumber\\
  &\le \int_0^b P_{3b-s}\Gamma(P_sx_\lambda ,P_sx_\lambda ) sds\nonumber\\
  &\kl 3 \int_0^b P_{b+s}\Gamma(P_sx_\lambda ,P_sx_\lambda ) sds  .
  \end{align}
By Proposition \ref{converge}, $\Phi(x_\lambda)$ converges in the
STOP topology implies that the last term in inequality (\ref{Gb})
weak $^*$ converges in $\N$. Thus
$\Gamma(P_bx_\lambda,P_bx_\lambda)$ weak $^*$ converges in $\N$ and
the limit exists in $\N$ with a norm bounded by $\frac
{54}{b^2}\|x\|^2_{BMO^c(\Gamma)}$. For (ii), we apply lemma
\ref{lemma} (i) and $\Gamma^2\geq0$ and get
\begin{align*}
& T_{t}|P_bx_\lambda|^2-|T_tP_bx_\lambda|^2\lel \int_0^t
T_{t-s}\Gamma(T_sP_bx_\lambda,T_sP_bx_\lambda)ds\\
&\leq  \int_0^t T_{t}P_{\frac {2b}3}\Gamma(P_{\frac
 b3}x_\lambda,P_{\frac
 b3}x_\lambda)ds \lel
 tT_tP_{\frac {2b}3}\Gamma(P_{\frac b3}x_\lambda,P_{\frac b3}x_\lambda).\end{align*}
Applying (\ref{Gb}), we have
\begin{eqnarray*}
T_{t}|P_bx_\lambda|^2-|T_tP_bx_\lambda|^2&\leq&\frac
{54t}{b^2}T_t\int_0^bP_{b+s}\Gamma(P_sx_\lambda ,P_sx_\lambda ) sds
.
\end{eqnarray*}
Thus $T_t|P_bx_\lambda|^2-|T_tP_bx_\lambda|^2$ weak $^*$ converges
in $\N$ and the limit exists in $\N$ with a norm bounded by $\frac
{54t}{b^2}\|x\|^2_{BMO^c(\Gamma)}$ for any $t>0$.

To prove (iii), we use the same idea in the proof of Lemma
\ref{mulgamma}. For any $t>0, 0<b<\min\{t^2,1\}$. Let
$Q_bx=(I-P_b)x=\int_0^b\frac{\partial P_sx}{\partial s}ds$. Then
\begin{eqnarray*}
&&\int_0^\infty P_{t+s} {\Gamma }[P_sQ_b(x)]\min\{t,s\}ds\\
&=&\int_0^\infty P_{t+s}\min\{t,s\}\Gamma [\int_s^{b+s} \frac{
 \partial P_v}{\partial v}xdv]ds\\
\text{(first ineq. of Lemma  \ref{convex gamma})} &\leq& \int_0^\infty \min\{t,s\}\frac1sP_{t+s}\bigg(\int_s^{b+s} {\Gamma }[v\frac{\partial P_v}{\partial v}%
x]dv\bigg)ds \\
\text{(Prop. \ref{monot}) } &\leq&2\int_0^\infty P_{t}\bigg(\int_s^{b+s} {\Gamma }[v\frac{\partial P_v}{%
\partial v}x]dv\bigg)ds \\
\text{(change of variables)} &=&8\int_0^\infty P_{t}\bigg(\int_{\frac s2}^{\frac {b+s}2} {\Gamma }[v\frac{\partial P_v}{
\partial v}P_vx]dv\bigg)ds\\
&=&8\int_0^\infty P_{t}\bigg(\int_{\frac s2}^{\frac {b+s}2} P_v{\Gamma }[P_vx]dv\bigg)ds\\
(\text{Integrate on $ds$ first})&=&8\int_0^\infty P_{t}P_v{\Gamma }[P_vx]\min\{2v,b\}dv\\
&\leq&8\int_{\sqrt b}^\infty P_{t}P_v{\Gamma }[P_vx]b dv+8\int_{0}^{\sqrt b} P_{t}P_v{\Gamma }[P_vx]v dv\\
&\leq&8\sqrt b\int_{\sqrt b}^\infty P_{t+v}{\Gamma }[P_vx] \min\{t,v\} dv+8\int_{0}^{\sqrt b} P_{t+v}{\Gamma }[P_vx]\min\{t,v\} dv.
\end{eqnarray*}
Thus, for any $t>0, g\in L^1_+(\N)$,
\begin{eqnarray*}
&&\tau(g\int_0^\infty P_{t+s} {\Gamma }[P_sQ_b(x)]\min\{t,s\}ds)\\
&\leq& 8\sqrt b\tau (g\int_{\sqrt b}^\infty P_{t+v}{\Gamma }[P_vx] \min\{t,v\} dv)+8\tau(g\int_{0}^{\sqrt b} P_{t+v}{\Gamma }[P_vx]\min\{t,v\} dv).
\end{eqnarray*}
This means
\begin{eqnarray*}
&&\lim_\la\tau(g\int_0^\infty P_{t+s} {\Gamma }[P_sQ_b(x_\lambda)]\min\{t,s\}ds)\\
&\leq& 8\sqrt b\lim_\la\tau (g\int_{\sqrt b}^\infty P_{t+v}{\Gamma
}[P_vx_\lambda] \min\{t,v\} dv)+8\lim_\la\tau(g\int_{0}^{\sqrt b}
P_{t+v}{\Gamma }[P_vx_\lambda]\min\{t,v\} dv).
\end{eqnarray*}
The first term in the right hand side converges to $0$ as
$b\rightarrow0$. We claim the second term converges to $0$ too. If
not,  there exists $\epsilon>0$ such that
$\tau(g\lim_\lambda\int_{0}^{\sqrt b} P_{t+v}{\Gamma
}[P_vx_\lambda]\min\{t,v\} dv)>\epsilon$ for all $b$. We reach a
contradiction with the absolute continuity of integrals  by choosing
$x_{\lambda_0}$ such that $\tau(g\lim_\lambda\int_{0}^{1}
P_{t+v}{\Gamma }[P_v(x_\lambda-x_{\la_0})]\min\{t,v\} dv)<\frac
\epsilon2$. The assertion (iii) is proved.

We now prove (iv). Let $x_\lambda\in \N^0$ be a net weak$^*$
converges to $x$ in $BMO^c(\Gamma)$. Suppose $x=0$ in
$BMO^c(\Gamma)$. The proof of (ii) implies that $w(P_bx_\lambda)$
weakly converges to $0$ in $V_\infty^c$. Here $w$ and $V_\infty^c$
are the embedding and Hilbert $L_\infty$-module defined for the
study of $bmo^c(\T)$ in Section 5.2.3. Therefore, $P_bx_\lambda$
weakly converges to $0$ in $bmo^c(\T)$ for every $b>0$. To prove the
reverse, recall that $P_bx$ weakly converges to $0$ in $bmo^c(\T)$
means that $T_{t}|P_b(x_\lambda)|^2-|T_tP_b(x_\lambda)|^2=\int_0^t
T_{t-s}\Gamma(T_sP_bx_\lambda,T_sP_bx_\lambda)ds$ weak $^*$
converges to $0$ in $\N$ for any $b>0$. Use the same idea as the
proof of (ii), we have $tP_{b}T_t(x_\lambda)$ weakly converges to
$0$ in $BMO^c(\Gamma)$ for any $b,t>0$. Then $P_{2b}(x_\lambda)$
weakly converges to $0$ in $BMO^c(\Gamma)$ for any $b>0$ since
$b^2P_b$ is an average of $tT_t$. This means $P_{2b}x=0$ in
$BMO^c(\Gamma)$ for any $b$. By (iii), we conclude that $x=0$  in
$BMO^c(\Gamma)$.

The same argument works for $BMO^r(\Gamma)$.
\end{proof}

For $x\in BMO^c(\Gamma), y\in BMO^r(\Gamma)$, we say $x=y$ if
$P_{b}(x-y)=0$ in $bmo^c(\T)\cap bmo^r(\T)$ for any $b>0$. For $x\in
BMO^c(\Gamma)$ given, such a $y$ is unique in $BMO^r(\Gamma)$ because of Lemma \ref{lastlemma} (iv).

\begin{defi}Let $BMO(\Gamma)$ be the space of all $x\in BMO^c(\Gamma)$ which belongs to $BMO^r(\Gamma)$ too.
 Define
$$\|x\|_{BMO(\Gamma)}=\max\{\|x\|_{BMO^c(\Gamma)}, \|y\|_{BMO^r(\Gamma)}\}.$$
Here $y$ is the unique element in $BMO^r(\Gamma)$ such that
$P_{b}(x-y)=0$ in $bmo^c(\T)\cap bmo^r(\T)$ for all $b>0$.
\end{defi}

\begin{theorem} Let $(T_t)$ be a standard semigroup satisfying
$\Gamma^2\geq0$ and admitting a reversed Markov dilation with \au
continuous path. Then
 \[ [BMO(\Gamma),L_q^0(\N)]_{\frac{q}{p}}\lel L_p^0(\N) \pl. \]
\end{theorem}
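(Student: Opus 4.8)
The plan is to run the argument of Theorem \ref{intpol}(iii) and Theorem \ref{intt1}, this time with the Hilbert $\ell^\infty(\rz_+)\ten\N$-module $\mathcal L$ and the space $BMO(\Gamma)$ just defined, using Lemma \ref{lastlemma} to legitimise every passage to a weak$^*$ limit. First I would check that $x\mapsto P_\Gamma\hat n_a(x)$ extends to a weak$^*$-continuous embedding of $BMO^c(\Gamma)$ into $bmo^c(\hat\M_a)$: on $\N^0$ this is Lemma \ref{bmo2}(iii) up to the universal constant, and Lemma \ref{lastlemma}(ii)--(iv) shows that a net $x_\lambda\in\N^0$ converging in $\mathcal L$ yields weak$^*$-convergent conditioned brackets for the martingales $\hat n_a(x_\lambda)$, so the image is well defined. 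By Proposition \ref{a.u.} these martingales have \au continuous path, so Lemma \ref{ctmar} upgrades $bmo^c$ to $BMO^c$ and gives $\|P_\Gamma\hat n_a(x)\|_{BMO^c(\hat\M_a)}\le c\|x\|_{BMO^c(\Gamma)}$. Applying the same reasoning to the row part and intersecting, $P_\Gamma\pi_{\ttt_a}$ embeds $BMO(\Gamma)$ into $BMO(\hat\M_a)$, while $\pi_{\ttt_a}$ (followed by the spatial projection) embeds $L_q^0(\N)$ into $L_q(\hat\M_a)$.

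Both endpoint bounds in hand, $P_\Gamma\pi_{\ttt_a}$ maps $[BMO(\Gamma),L_q^0(\N)]_{q/p}$ into $[BMO(\hat\M_a),L_q(\hat\M_a)]_{q/p}$, which equals $L_p(\hat\M_a)$ with constant $\simeq p$ by Lemma \ref{JuPe} and reiteration. Feeding this into \eqref{eqbr1} of Lemma \ref{interbr} — whose hypothesis, \au continuity of $(\hat E_t\pi_{\ttt_a}y)_t$, is exactly Proposition \ref{a.u.} — and letting $a\to\infty$ as in the proof of Theorem \ref{intpol}(iii), one recovers $\|y\|_{L_p(\N)}\le c\,p\,\|y\|_{[BMO(\Gamma),L_q^0(\N)]_{q/p}}$, the residual range $2<p<4$ being absorbed exactly as there. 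Hence $[BMO(\Gamma),L_q^0(\N)]_{q/p}\subset L_p^0(\N)$ with the stated constant.

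For the reverse inclusion the difficulty is the one already met for $bmo(\T)$: $L_p^0(\N)$ need not be dense in the interpolation couple, since only the distinguished martingales $\hat n_a(x)$ are known to be \au continuous. I would therefore introduce a Hardy predual $h_1(\Gamma)\subset L_1^0(\N)$ as a quotient of the $L_1$-module attached to $\langle\cdot,\cdot\rangle_\Gamma$, mirroring the definition of $h_1(\T)$, prove the analogue of Lemma \ref{swindle} — that $\pi_{\ttt_a}^*$ carries $H_1^c(\hat\M_a)$ into $h_1^c(\Gamma)$, the $h_1^d$-component vanishing on \au continuous martingales by Lemma \ref{ctmar} — and then, as in the proof of Theorem \ref{intt1}, deduce $L_p^0(\N)\subset[L_2^0(\N),h_1(\Gamma)]_\theta$ for the appropriate $\theta$ by pulling back the martingale identity $[L_2(\hat\M_a),H_1(\hat\M_a)]_\theta=L_p(\hat\M_a)$ through $\pi_{\ttt_a}^*$. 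Together with the trivial inclusion $[L_2^0(\N),h_1(\Gamma)]_\theta\subset[L_2^0(\N),L_1^0(\N)]_\theta$ this gives equality, and then Wolff's interpolation theorem together with the duality $(h_1(\Gamma))^*=BMO^c(\Gamma)\cap BMO^r(\Gamma)=BMO(\Gamma)$ — obtained as in Lemma \ref{elemprop}(iv) once $h_1^c(\Gamma)\cap h_1^r(\Gamma)\cap L_p(\N)$ is shown dense — closes the chain $[BMO(\Gamma),L_q^0(\N)]_{q/p}=L_p^0(\N)$.

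The main obstacle I anticipate is this last paragraph: constructing $h_1(\Gamma)$, computing the adjoint $w_\Gamma^*$ of the embedding into the $\Gamma$-module, establishing the density statements that make Wolff's theorem and $H_1$--$BMO$ duality applicable, and checking that $\pi_{\ttt_a}^*$ is compatible with all these identifications. Each step is routine in spirit but relies on Lemma \ref{lastlemma} to control the weak$^*$ limits defining $BMO(\Gamma)$, and one must be careful about the low-$p$ range as in Theorem \ref{intpol}.
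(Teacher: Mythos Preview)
Your proposal is essentially correct and follows the same route as the paper. Two remarks.

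First, a small but genuine gap: when you invoke Proposition \ref{a.u.} to say ``these martingales have \au continuous path'' and then apply Lemma \ref{ctmar}, the martingales you need \au continuity for are $P_\Gamma\hat n_a(x_\lambda)$, not $\hat n_a(x_\lambda)$ itself. Proposition \ref{a.u.} only gives the latter. The paper fills this gap explicitly: write $\pi_{\ttt_a}(x)=P_\Gamma(\pi_{\ttt_a}(x))+P_{br}(\pi_{\ttt_a}(x))$; the Brownian part $P_{br}(\pi_{\ttt_a}(x))$ is a stochastic integral against $B_t$ and hence has continuous path, so the difference $P_\Gamma(\pi_{\ttt_a}(x))$ inherits \au continuity. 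You should insert this decomposition step.

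Second, the paper's proof is more economical than yours: it skips your first two paragraphs entirely. Rather than proving the inclusion $[BMO(\Gamma),L_q^0(\N)]_{q/p}\subset L_p^0(\N)$ directly via the embedding into martingale $BMO$ and Lemma \ref{interbr}, the paper works only on the predual side --- defining $h_1(\Gamma)$, proving the analogue of Lemma \ref{swindle} (that $\pi_0^*(P_\Gamma H_1^c(\hat\M_a))\subset h_1^c(\Gamma)$ because $\pi_0^*$ kills $h_1^d$ on \au continuous martingales), obtaining $L_p^0(\N)\subset[L_2^0(\N),h_1(\Gamma)]_\theta$ for $1<p<2$, combining with the trivial reverse inclusion, and then invoking duality and Wolff to get the full statement. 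Your third paragraph is exactly this, and your anticipated obstacles (density, computing $w_\Gamma^*$, duality) are the right ones; the paper simply declares these steps to be ``the same argument used in the proof of Theorem \ref{intt1}''.
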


\begin{proof} Let $x\in BMO(\Gamma)\cap L_q^0(\N)$.  By
Proposition \ref{a.u.}, we know that $\pi_{\ttt_a}(x)$ has
continuous path with respect to the filtration $\hat\M_{a,t}$. Note
that
 \[ \pi_{\ttt_a}(x) \lel
 P_{\Gamma}(\pi_{\ttt_a}(x))+P_{br}(\pi_{\ttt_a}(x)) \pl .\]
However, $P_{br}(\pi_{\ttt_a}(x))=\int_0^{\ttt_a}\pi_r(\partial
P_{B_r}(x))dr$ is a stochastic integral against the Brownian motion
and hence has continuous path. Taking the difference, we know that
$P_{\Gamma}(\pi_{\ttt_a}(x))$ has a.u. continuous path. We can now
copy the proof for
 $bmo(\T)$. More precisely, let $h_1(\Gamma)$ be an abstract predual of $BMO(\Gamma)$.
 Similar to the proof of Lemma \ref{swindle}  we
 have $\pi_0^*(P_\Gamma H_1^c(\M))\subset h_1^c(\Gamma)$ since $\pi_0^*(h_1^d(\M))=\{0\}$. Then, by the same argument used in the proof of Theorem
 \ref{intt1}, we have
 $$L_p^0(\N)\subset [L_2^0(\N),h_1(\Gamma)]_{\frac{2-p}{p}},$$ for $1<p<2$. By
 duality and Wolff's theorem, we obtain the result.
\qd


{\bf Open problems.} At the end of this article we want to mention
some open problems.

\medskip

(i) {\it $H^1$-BMO duality for semigroup of operators.} Fefferman's
$H^1$-BMO duality theory has been studied in the context of
semigroups by many researchers. In particular, Varopoulos
established an $H^1$-BMO duality theory for a ``good" semigroups by
a probabilistic approach. Duong/Yan studied this topic for operators
with heat kernel bounds (see \cite{DuLi}). In their proofs, the
geometric structure of Euclidean spaces is essential. Mei (see
\cite{Mei}) provides a first approach of this problem in the context
of von Neumann algebras with two additional assumptions on the
semigroups. The authors expect a more general $H^1$-BMO duality in
the context of semigroups.

\medskip
(ii) {\it Comparison of different semigroup BMO-norms}. There are
several natural semigroup BMO norms as introduced in this article. A
complete comparison of them is in order. In particular, it will be
interesting to investigate the conditions on the semigroups so that
we have the estimates,

(a) $\|\cdot\|_{BMO^c(\P)}\simeq \|\cdot\|_{BMO^c(\partial)}\simeq
\|\cdot\|_{BMO^c(\Gamma)}$.

(b) $\|\cdot\|_{bmo^c(\P)}\simeq \|\cdot\|_{BMO^c(\P)}\simeq
\|\cdot\|_{BMO^c(\hat{\Gamma})}$.

(c) $\|\cdot\|_{bmo^c(\T)}\simeq \|\cdot\|_{BMO^c(\T)}.$

(c')  $\sup_t\|T_tx-T_{2t}x\|\leq c \|x\|_{bmo^c(\T)}$.

(d) $\sup_t\|T_t\int_0^t|\frac{\partial T_sx}{\partial
s}|^2sds\|\leq c \|x\|^2$.

\medskip

\medskip
(iii) The classical BMO functions $\varphi$ on $\rz$ is integrable
with respect to $\frac {1}{1+t^2}dt$. {\it What is a noncommutative
analogue of this property ?} A more precise question is, does there
exist a normal faithful state ${\bf \tau}$ on $\N$ such that ${\bf
\tau}|x| \leq c\|x\|_{BMO(\T)}$ for $x\in \N$.
\medskip

\bibliographystyle{alpha}
\bibliography{bibli}
\end{document}